\keywords{Lie algebra, nilpotent, filiform, totally geodesic subalgebra}
\subjclass[2000]{57R30, 22E25, 53C30}    
\def\ad{\operatorname{ad}}
\def\pip{\operatorname{\pi_\perp}}
\def\rk{\operatorname{rk}}          
\def\Span{\operatorname{Span}}      
\def\End{\operatorname{End}}        
\def\b{\mathfrak b}
\def\g{\mathfrak g}
\def\h{\mathfrak h}
\def\hp{{\mathfrak h}^{\perp}}
\def\k{\mathfrak k}
\def\m{\mathfrak m}
\def\n{\mathfrak n}
\def\V{\mathcal V}
\def\N{\mathbb N}
\def\R{\mathbb R}
\def\C{\mathbb C}
\def\0{\mathbb 0}
\def\Kf{\mathbb K}
\def\E{\mathcal E}
\def\B{\mathcal B}
\def\K{\delta}                      
\def\<{\langle}                     
\def\>{\rangle}
\def\ip{\<\cdot,\cdot\>}
\def\al{\alpha}
\theoremstyle{plain}
\newtheorem{theorem}{Theorem}[section]
\newtheorem{lemma}[theorem]{Lemma}
\newtheorem*{theorem*}{Theorem}
\theoremstyle{definition}
\newtheorem{Definition}[theorem]{Definition}
\newtheorem{remark}[theorem]{Remark}
\title{Totally Geodesic Subalgebras of Filiform Nilpotent Lie algebras}
\author[Cairns, Hini\'c Gali\'c and  Nikolayevsky]{Grant Cairns $^1$, Ana Hini\'c Gali\'c $^2$  and Yuri Nikolayevsky $^3$}
\address{$^1$
Department of Mathematics and Statistics\\
 La Trobe University\\
 Melbourne,  3086\\
 Australia}
\email{G.Cairns@latrobe.edu.au}
\address{$^2$
Department of Mathematics and Statistics\\
 La Trobe University\\
 Melbourne,  3086\\
 Australia}
\email{A.HinicGalic@latrobe.edu.au}
\address{$^3$
Department of Mathematics and Statistics\\
 La Trobe University\\
Melbourne,  3086\\
Australia}
\email{Y.Nikolayevsky@latrobe.edu.au}
\begin{document}

\maketitle

\begin{abstract}
We determine the maximal dimension of totally geodesic subalgebras of $\N$-graded filiform Lie algebras, and we show that these bounds are attained.
\end{abstract}

\section{Introduction}

Consider a finite-dimensional Lie algebra $\g$. If $\g$ is equipped with an inner product $\ip$ and  $\h$ is a Lie subalgebra of $\g$, we denote its orthogonal complement $\hp$. We say that $\h$ is {\em totally geodesic} if
\begin{equation}\label{L:basic}
2 \langle \nabla_Y Z,X\rangle :=\langle [X,Y],Z\rangle +\langle [X,Z],Y\rangle = 0, \ \text{for all}\ X\in\hp, Y,Z\in \h.
\end{equation}
 The definition is chosen so that the corresponding Lie subgroup is totally geodesic in the usual sense.

This paper is the sequel to \cite{CHGN}, in which we gave a number of results concerning totally geodesic subalgebras of  nilpotent Lie algebras. In particular, we showed that in each dimension $n\geq 3$, there is up to isomorphism only one  filiform nilpotent Lie algebra that possesses a  totally geodesic subalgebra of codimension two.
We showed that in filiform nilpotent Lie algebras, totally geodesic subalgebras that leave invariant their orthogonal complements have dimension  at most half the dimension of the algebra. And we gave an example of a 6-dimensional filiform nilpotent Lie algebra that has no totally geodesic subalgebra of dimension $>2$, for any choice of inner product. Results on bases of geodesics in Lie algebras were given in \cite{CLNN}.

In this paper, we focus on an important natural family of nilpotent Lie algebras.

\begin{Definition}
We say  an $n$-dimensional nilpotent Lie algebra $\g$ is {\em $\N$-graded filiform}, if it can be decomposed in a direct
sum of one-dimensional subspaces
$\g=\bigoplus_{i=1}^n V_i$ with  $[V_1,V_i]=V_{i+1}$ for all $i>1$ and $[V_i,V_j]\subset V_{i+j}$ for all $i,j\in\N$, where for convenience we set $V_i=0$ for $i>n$.
\end{Definition}

These algebras have been completely classified by Millionshchikov  \cite{DMil2004}. There are 6 natural sequences of algebras of arbitrary large dimension, and in addition, in each dimension from 7 through to 11, there is a one-parameter family of exceptional algebras. The purpose of our paper is to establish the following result:

\begin{theorem}
Suppose that $\h$ is a subalgebra of an $\N$-graded filiform Lie algebra $\g$ and that $\h$ is totally geodesic with respect to some inner product on~$\g$. Then one of the following conditions holds:
\begin{enumerate}[\rm (a)]
\item $\g$ has a presentation with basis $\{X_1,\dots,X_n\}$ and relations $[X_1,X_i]=X_{i+1} $ for all $1<i<n$. In this case $\dim(\h)\leq \dim(\g)- 2$.
\item $\g$ has a presentation with basis $\{X_1,\dots,X_{2k+1}\}$  and relations $[X_1,X_i]=X_{i+1} $ for all $1<i<2k+1$ and  $[X_l, X_{2k+1-l}] = (-1)^{l+1}X_{2k+1}$ for all $1<l <k+1$.  In this case    $\dim(\h)\leq \dim(\g)-4$.
\item $\g$ is not isomorphic to one of the algebras in parts {\rm (a)} and {\rm (b)}, in which case\break $\dim(\h)\leq \lfloor \dim (\g) /2\rfloor$.
\end{enumerate}
Moreover, each of the above bounds is attained for some inner product and  subalgebra $\h$.
 \end{theorem}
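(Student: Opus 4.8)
The plan is to recast the condition \eqref{L:basic} operator-theoretically and then split the analysis according to how $\h$ interacts with the grading. Writing $\pi_{\h}$ and $\pip$ for the orthogonal projections onto $\h$ and $\hp$, and setting $A_X:=\pi_{\h}\circ\ad_X|_{\h}\colon\h\to\h$ for $X\in\hp$, condition \eqref{L:basic} is precisely the statement that each $A_X$ is skew-symmetric on $(\h,\ip)$. This yields a dichotomy: either $A_X=0$ for every $X\in\hp$, which is equivalent to $\h$ leaving $\hp$ invariant, or $A_X\neq0$ for some $X$. In the first case the result of \cite{CHGN} quoted above gives $\dim\h\le\lfloor\dim\g/2\rfloor$ at once. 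A short computation shows the second case cannot occur when both $\ip$ and $\h$ respect the grading: if $\h=\bigoplus_{i\in S}V_i$ with the $V_i$ mutually orthogonal, then $\ad_X$ raises the grading by $p:=\deg X\ge 1$, so a nonzero entry $\langle A_XY,Y'\rangle$ forces $\deg Y'=p+\deg Y$ while skew-symmetry forces $\deg Y=p+\deg Y'$; adding these gives $2p=0$, a contradiction, so $A_X=0$. Consequently any totally geodesic $\h$ exceeding the half-dimension bound must be \emph{genuinely non-graded}, and this is where the real work lies.

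The heart of the argument is the structural claim: \emph{if $\dim\h>\lfloor\dim\g/2\rfloor$, then $\g$ is isomorphic to the algebra of part {\rm(a)} or part {\rm(b)}.} Granting this, part {\rm(c)} is immediate by contraposition, since an algebra of type {\rm(c)} is then forced into the invariant case and the half-dimension bound. To prove the claim I would fix $X\in\hp$ with $A_X\neq0$ and exploit the tension that $\ad_X$ is nilpotent while $A_X$ is skew: thus $A_X$ has nonzero purely imaginary spectrum and decomposes $\h$ into rotation $2$-planes. Passing to the symbol filtration $F_k=\bigoplus_{i\ge k}V_i$ and tracking degrees through each relation $A_Xu=\mu v,\ A_Xv=-\mu u$, the projection $\pi_{\h}$ must lower the degree of $\ad_Xu$ or $\ad_Xv$; since $\ad_X$ strictly raises degree, such a drop is possible only through cancellations, and each cancellation translates into a linear relation among the structure constants $[V_i,V_j]$. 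Matching these forced relations against Millionshchikov's classification \cite{DMil2004} should leave exactly the model algebra {\rm(a)}, where the only nontrivial bracket is the shift, and the one-parameter odd family {\rm(b)}, where the extra antisymmetric pairing $[X_l,X_{2k+1-l}]=(-1)^{l+1}X_{2k+1}$ supplies precisely the resonance needed to assemble the rotations. Carrying out this matching uniformly in the dimension, and for non-graded inner products, is the principal obstacle.

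It remains to pin down the exact bounds in the two exceptional cases. For {\rm(a)} the operator $\ad_{X_1}$ is a single Jordan block of size $\dim\g-1$ and is essentially the only nontrivial adjoint; I would rule out codimension one by observing that there $\hp$ is a line, so a single $A_X$ would have to be a nonzero skew operator built from one nilpotent $\ad_X$ on a hyperplane, whereas the leading-term analysis above shows $A_X$ is nilpotent, contradicting skewness—while $A_X=0$ returns us to the invariant case with $\dim\h\le\lfloor\dim\g/2\rfloor<\dim\g-1$. A parallel but finer count, keeping track of how many independent rotation planes the additional bracket of {\rm(b)} can support, should produce the sharper gap of $4$.

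Finally, for attainment I would exhibit explicit pairs $(\h,\ip)$. For {\rm(c)} a graded half-dimensional subalgebra with a graded inner product is totally geodesic automatically, by the computation in the first paragraph. A codimension-two totally geodesic subalgebra realising {\rm(a)} is already produced in \cite{CHGN}. For {\rm(b)} I would build the extremal $\h$ in reverse from the rotation-plane analysis, choosing a carefully tuned non-graded inner product under which the leading shift organises into genuine rotations, and then verify \eqref{L:basic} directly on a basis; since the upper bound and the construction are governed by the same bookkeeping, sharpness follows from the same analysis that establishes the bound.
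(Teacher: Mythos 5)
There is a genuine gap: your argument never actually proves its central structural claim. The assertion that ``matching these forced relations against Millionshchikov's classification should leave exactly the model algebras (a) and (b)'' is where essentially all of the content of the theorem lives, and the mechanism you propose for it does not work as stated. The operator $A_X=\pi_{\h}\circ\ad_X|_{\h}$ is \emph{not} nilpotent in general: composing a nilpotent operator with an orthogonal projection destroys nilpotency, so you cannot conclude that skewness forces $A_X=0$, nor that a nonzero $A_X$ decomposes $\h$ into rotation planes whose ``resonances'' single out the algebras of parts (a) and (b). (The paper does use the nilpotent-plus-skew-implies-zero trick, but only for $\ad(Z_\h)|_\h$ where $Z_\h$ is the projection of a central element and $\h$ is genuinely $\ad(Z_\h)$-invariant.) In reality the bound $\dim(\h)\le\lfloor n/2\rfloor$ for the algebras of part (c) has to be verified family by family through the classification: $\m_2(n)$, $\V_n$, $\m_{0,2}(2k+2)$, $\m_{0,3}(2k+3)$ and the five exceptional one-parameter families $\g_{n,\alpha}$, $n=7,\dots,11$, each of which needs its own degree-counting and quotient arguments (and, for $\m_{0,3}$, a rank analysis of the pencil of skew forms $J_N$ on the two-step quotient). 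Your proposal contains no route through any of these cases, and the leading-term bookkeeping you describe would at best recover the easy graded situation dealt with in your first paragraph.

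The attainment claims are equally unsubstantiated where they matter most. For part (b) you must do two separate hard things: first show that codimension $3$ is impossible in $\m_{0,1}(2k+1)$ (the paper does this by observing that a codimension-$3$ subalgebra would sit as a codimension-one totally geodesic subalgebra of a Heisenberg subalgebra, which is forbidden), and second exhibit a totally geodesic subalgebra of codimension exactly $4$. The latter is not something that ``follows from the same bookkeeping'': the paper's construction requires a non-graded bracket presentation built from a symmetric nonsingular matrix $S$ and a vector $u$ with $S(-S+uu^t)$ nilpotent, moment conditions on $\langle S^{1-2l}u,u\rangle$ solved via a Vandermonde determinant and a combinatorial identity, and a polynomial square-root argument together with Cayley--Hamilton to verify that the resulting algebra really is $\m_{0,1}(2k+1)$. ``Building the extremal $\h$ in reverse from the rotation-plane analysis'' names none of these obstructions, and sharpness does not follow from the upper-bound argument. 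As written, the proposal is a plausible research outline rather than a proof.
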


The paper is organised as follows. In the next section we recall some background information, including Millionshchikov's classification. In Section \ref{S:prel} we establish some general preliminary results. Section \ref{S:table1}
 deals with the 6 natural families of $\N$-graded filiform Lie algebras. Finally, Section \ref{S:table2}
 treats the exceptional algebras in dimension~7 through to~11.


\section{Background}

In 1983, Fialowski classified all infinite-dimensional $\N$-graded filiform Lie algebras, and later some of her results were rediscovered by Khakimdjanova and Khakimdjanov in \cite{YKKK}.

\begin{theorem}[\textbf{\cite{AF}}]\label{T:AF}
Let $\g $ be an infinite-dimensional $\N$-graded filiform Lie algebra. Then $\g$ is isomorphic to precisely one of the following  Lie algebras:

\begin{enumerate}[\rm (a)]
\item $\m_0=\Span( X_i, i\in\N \mid [X_1,X_j]=X_{j+1}, \forall j\geq 2)$,
\item $\m_2=\Span( X_i, i\in\N \mid [X_1,X_j]=X_{j+1}, \forall j\geq 2,\ [X_2,X_j]=X_{j+2}, \forall j\geq
3)$,
\item $\V=\Span( X_i, i\in\N \mid [X_i,X_j]=(j-i)X_{i+j}, \forall i , j)$.
\end{enumerate}
\end{theorem}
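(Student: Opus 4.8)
The strategy is to work in a basis adapted to the grading and reduce the classification to solving a system of constraints on the structure constants. First I would fix $X_1\in V_1$ and $X_2\in V_2$ and set $X_{i+1}:=[X_1,X_i]$ for $i\ge 2$, so that $\{X_i\}_{i\ge 1}$ spans $\g$ with $V_i=\R X_i$ and $[X_1,X_i]=X_{i+1}$ for all $i\ge 2$. Since $[V_i,V_j]\subseteq V_{i+j}$, every remaining bracket has the form $[X_i,X_j]=c_{ij}X_{i+j}$ with $c_{ij}=-c_{ji}$ and $c_{ii}=0$ for $i,j\ge 2$. Note that $\g$ is generated by $X_1$ and $X_2$, and the only freedom left in the basis is the rescaling $X_1\mapsto \alpha X_1$, $X_2\mapsto\beta X_2$ (which forces $X_i\mapsto\alpha^{i-2}\beta X_i$); this acts on the whole array $(c_{ij})$ by a single common scalar $\alpha^{-2}\beta$, so up to isomorphism we may normalise one nonzero structure constant to $1$.

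The first key step is to extract the constraints coming from the Jacobi identity. Applying it to the triple $(X_1,X_i,X_j)$ and comparing coefficients of $X_{i+j+1}$ gives the linear recursion
\[ c_{ij}=c_{i+1,j}+c_{i,j+1},\qquad i,j\ge 2. \]
This expresses each row of the array in terms of the previous one, so all $c_{ij}$ are determined by the single sequence $a_j:=c_{2j}$ ($j\ge 3$) via iterated differences, subject to the consistency conditions $c_{ii}=0$. Applying Jacobi to a general triple $(X_i,X_j,X_k)$ with $i,j,k\ge 2$ yields the quadratic identities $c_{ij}c_{i+j,k}+c_{jk}c_{j+k,i}+c_{ki}c_{k+i,j}=0$. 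The plan is to combine the linear recursion, the consistency conditions, and these quadratic identities to pin down $(a_j)$ completely.

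The case analysis then proceeds on the support of $(a_j)$, equivalently on how far $\ad X_2,\ad X_3,\dots$ fail to annihilate $\g^2=[\g,\g]$. If $a_j=0$ for all $j$, the iterated differences force $c_{ij}=0$ for all $i,j\ge 2$, and $\g\cong\m_0$. If $(a_j)$ is a nonzero constant sequence (equivalently $c_{ij}=0$ for all $i\ge 3$), then after normalising $a_j=1$ one reads off $[X_2,X_j]=X_{j+2}$ and $\g\cong\m_2$; here the quadratic identities hold trivially because no two nonzero constants can compose. The remaining, and most delicate, case is when some $c_{ij}$ with $i\ge 3$ is nonzero: I expect the quadratic identities, fed into an induction on the weight $i+j$, to rigidly determine every $c_{ij}$, producing after rescaling the Witt-type coefficients $c_{ij}=(j-i)(i-2)!(j-2)!/(i+j-2)!$ that one obtains by writing $\V$ in the adapted basis, whence $\g\cong\V$.

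I anticipate this last case to be the main obstacle. The linear recursion together with the consistency conditions $c_{ii}=0$ still admits an infinite-dimensional family of solutions (for instance all antisymmetric combinations $c_{ij}=t^is^j-t^js^i$ with $s=1-t$), so it is precisely the quadratic Jacobi identities that must collapse this family to the single Witt solution; because the algebra is infinite-dimensional there is no top degree at which the recursion terminates, so one has to control the constraints to all orders and verify consistency of the limiting solution. Finally, I would confirm that $\m_0$, $\m_2$, $\V$ are pairwise non-isomorphic using basis-independent invariants — for example the codimension of the centraliser $C_{\g}(\g^2)$, which equals $1$ for $\m_0$, $2$ for $\m_2$, and is infinite for $\V$ — and check directly that each of the three presentations satisfies the Jacobi identity, establishing that all three are realised.
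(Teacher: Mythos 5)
First, a caveat on the comparison: the paper does not prove Theorem~\ref{T:AF} at all --- it is quoted from Fialowski \cite{AF} as background --- so your proposal can only be measured against the statement itself and the cited literature. Your setup is correct and is the natural opening: since $[V_1,V_i]=V_{i+1}$ holds with equality, the adapted basis $X_{i+1}=[X_1,X_i]$ exists; Jacobi on $(X_1,X_i,X_j)$ does give $c_{ij}=c_{i+1,j}+c_{i,j+1}$, so each row of $(c_{ij})$ is an iterated difference of $a_j=c_{2j}$ (and one checks that antisymmetry of the whole array follows from the recursion together with your diagonal conditions $c_{ii}=0$); the graded rescalings act on the array by the single scalar $\alpha^{-2}\beta$; the cases $a_j\equiv 0$ and $a_j\equiv\mathrm{const}\neq 0$ do yield $\m_0$ and $\m_2$; your coefficients $c_{ij}=(j-i)\,(i-2)!\,(j-2)!/(i+j-2)!$ are indeed what $\V$ looks like in the adapted basis; and the invariant $\codim C_\g([\g,\g])=1,2,\infty$ correctly separates the three algebras in a basis-free way, which also quietly covers the point that an abstract isomorphism need not respect the grading.

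The genuine gap is your case (3), and it is not a routine verification: it is the entire content of the theorem. You write that you ``expect the quadratic identities, fed into an induction on the weight $i+j$, to rigidly determine every $c_{ij}$,'' but no straightforward weight-by-weight induction can succeed, and this very paper documents why: the one-parameter families $\g_{n,\alpha}$ of Table~\ref{Table:2} (and the sporadic algebras $\m_{0,1},\m_{0,2},\m_{0,3}$ of Table~\ref{Table:1}) are $\N$-graded filiform, so their structure constants satisfy every Jacobi constraint in weights $\le n$, up to $n=11$, without being of Witt, $\m_2$ or $\m_0$ type. For instance, in $\g_{7,\alpha}$ one has $a_3=a_4=2+\alpha$, $a_5=1+\alpha$, $c_{34}=1$ (consistently with $c_{34}=a_4-a_5$), and the ratio $a_5/c_{34}=1+\alpha$ is a genuine modulus that your one global rescaling cannot remove. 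Thus at every finite stage of the proposed induction there are spurious solution branches and surviving free parameters, and a correct proof must show that each such branch is killed by identities of strictly higher --- a priori unbounded --- weight; that $\alpha$ only dies past weight $11$ (compare \cite{YK}, and Remark~\ref{R:DMilTheorem}, where $\g_{n,8}\cong\V_n$ shows the Witt solution itself sits on these branches) indicates the required all-orders analysis of relations such as the one from $(X_2,X_3,X_j)$, namely $a_3c_{5,j}-(a_j-a_{j+1})a_{j+3}+a_j(a_{j+2}-a_{j+3})=0$ with $c_{5,j}=a_j-3a_{j+1}+3a_{j+2}-a_{j+3}$, and nothing of the sort is carried out. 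You have honestly diagnosed where the difficulty lies (your observation that the linear constraints alone admit the family $c_{ij}=t^is^j-t^js^i$, $s+t=1$, is a valid witness), but as it stands the proposal is a correct reduction to the hard problem together with a statement of its answer, not a proof.
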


If $\g=\bigoplus_{i=1}^\infty V_i$ is an infinite-dimensional $\N$-graded filiform Lie algebra, then the quotient algebra $\g(n)= \g/\bigoplus_{i={n+1}}^\infty V_i$ is an $n$-dimensional $\N$-graded filiform Lie algebra. Therefore, in this way we can obtain three natural sequences of finite-dimensional algebras, that we denote: $\m_0(n)$, $\m_2(n)$, $\V_n$.
In 1991, Khakimdjanov proved in \cite{YK} that there only exists  a finite number of non-isomorphic $\N$-graded filiform Lie algebras over $\C$ in dimensions $\geq 12$. Then,  in 2004, Millionshchikov classified all finite-dimensional $\N$-graded filiform Lie algebras  over an arbitrary field $\Kf$ of characteristic zero \cite{DMil2004}. 

\begin{theorem}[\textbf{\cite{DMil2004}}]\label{T:DMil}
Let $\g $ be a finite-dimensional $\N$-graded  filiform Lie algebra. Then $\g$ is isomorphic to a Lie algebra from the following list:
\begin{enumerate}[\rm (a)]
\item the six sequences $\m_0(n)$, $\m_2(n)$, $\V_n$, $\m_{0,1}(2k+1)$, $\m_{0,2}(2k+2)$, $\m_{0,3}(2k+3)$, defined by the basis $\{X_1,\ldots,X_n\}$ and commutation relations given in Table~\ref{Table:1};

\item the 5 one-parameter families $\g_{n,\alpha}$ of dimensions $n = 7,\ldots, 11$ respectively,
defined by their basis and commutation relations given in Table~\ref{Table:2}.

\end{enumerate}
\end{theorem}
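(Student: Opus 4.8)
The plan is to prove the classification directly from graded structure constants and the Jacobi identity, organising the argument so that an entire such algebra is pinned down by a small ``seed'' of constants, and then reducing the seed to the listed normal forms using the residual scaling freedom. Fix a homogeneous basis by choosing $0\neq X_i\in V_i$ for each $i$. Since $[V_i,V_j]\subset V_{i+j}$ and each $V_k$ is one-dimensional (with $V_k=0$ for $k>n$), there are scalars $c_{ij}\in\Kf$ with $[X_i,X_j]=c_{ij}X_{i+j}$, subject only to antisymmetry $c_{ij}=-c_{ji}$ (so $c_{ii}=0$) and the requirement, coming from $[V_1,V_i]=V_{i+1}$, that $c_{1i}\neq0$ for $2\le i\le n-1$. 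Replacing $X_i$ by $\lambda_iX_i$ with $\lambda_i\in\Kf^\times$ sends $c_{ij}\mapsto(\lambda_i\lambda_j/\lambda_{i+j})c_{ij}$; using this I would normalise $c_{1i}=1$ for all $2\le i\le n-1$, which determines $\lambda_3,\lambda_4,\dots$ in terms of $\lambda_1$ and $\lambda_2$ and hence leaves exactly the two-parameter residual freedom carried by $\lambda_1$ and $\lambda_2$.

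Next I would extract the backbone relations from the Jacobi identities that involve $X_1$. For $i,j\ge2$, expanding
\begin{equation*}
[[X_1,X_i],X_j]+[[X_i,X_j],X_1]+[[X_j,X_1],X_i]=0
\end{equation*}
and using the normalisation $[X_1,X_k]=X_{k+1}$ yields the single scalar recurrence
\begin{equation*}
c_{i+1,j}+c_{i,j+1}=c_{ij},\qquad i,j\ge2,
\end{equation*}
valid whenever $i+j+1\le n$ (all terms vanishing trivially otherwise). This is a backward-difference relation in the row index, so every constant is determined by the ``second row'' $c_{2,j}$ via
\begin{equation*}
c_{i,j}=\sum_{l=0}^{i-2}(-1)^l\binom{i-2}{l}\,c_{2,\,j+l}.
\end{equation*}
Thus the whole bracket is governed by the $n-4$ numbers $c_{2,j}$ with $3\le j\le n-2$, and $\m_0(n)$ is recovered precisely when all of these vanish.

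The remaining content is to impose the Jacobi identities among $X_2,X_3,\dots$ together with the antisymmetry $c_{ij}=-c_{ji}$, which the closed form above does not enforce automatically. Substituting it into $[[X_i,X_j],X_k]+\text{cyc}=0$ converts each triple into the polynomial relation $c_{ij}c_{i+j,k}+c_{jk}c_{j+k,i}+c_{ki}c_{k+i,j}=0$, which is quadratic in the seed $c_{2,j}$. I would show that this overdetermined quadratic system forces the seed into a short list of normal patterns, each of which, after using the two residual scalings $\lambda_1,\lambda_2$ to remove a free multiplicative factor, yields exactly one of the six sequences $\m_0(n)$, $\m_2(n)$, $\V_n$, $\m_{0,1}(2k+1)$, $\m_{0,2}(2k+2)$, $\m_{0,3}(2k+3)$. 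A genuine modulus survives only when the quadratic constraints become degenerate and the scalings fail to absorb a parameter, which happens solely in the low-dimensional range and produces the one-parameter families $\g_{n,\alpha}$ for $n=7,\dots,11$. Equivalently, in the language of deformation theory, the seed space is the positive-weight part of $H^2(\m_0(n);\m_0(n))$, the quadratic relations are the Maurer--Cartan (integrability) equations, and the moduli are the non-scalable directions.

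The main obstacle is this final case analysis: the quadratic Maurer--Cartan system changes character as $n$ grows, so the delicate step is to prove that for $n\ge12$ only the six rigid sequences survive while in dimensions $7$ through $11$ exactly the exhibited exceptional families appear, and then to verify that all algebras on the resulting list are pairwise non-isomorphic. For the latter I would compare discrete invariants insensitive to the choice of graded basis, such as the dimensions of the successive terms of the lower central and derived series and the rank of the grading derivation. I would relegate the bookkeeping for the low-dimensional cases, and the explicit normal forms recorded in Tables~\ref{Table:1} and~\ref{Table:2}, to a direct computation.
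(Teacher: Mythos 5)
You should first note that the paper contains no proof of this statement at all: it is Millionshchikov's classification, imported verbatim from \cite{DMil2004}, so there is no internal argument to match. Your outline does parallel the strategy used in the literature (reduce to the second row $c_{2,j}$ via the recurrence $c_{i+1,j}+c_{i,j+1}=c_{ij}$, interpret the seed as positive-weight classes in $H^2(\m_0(n);\m_0(n))$ and the remaining Jacobi identities as Maurer--Cartan equations), and that reduction is correctly executed as far as it goes. But the decisive step --- showing that the overdetermined quadratic system is rigid for $n\ge 12$, yielding only the six sequences, while in dimensions $7$ through $11$ it degenerates to exactly the five exhibited one-parameter families --- is asserted in a single sentence (``I would show that this\dots forces the seed into a short list'') and then relegated to ``bookkeeping''. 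That case analysis \emph{is} the theorem; in Millionshchikov's paper it occupies the bulk of the argument and leans on Fialowski's infinite-dimensional classification (Theorem~\ref{T:AF}) together with an induction on dimension, none of which appears in your plan. A proposal whose central claim is a placeholder has a genuine gap, not a deferred computation.

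Two further steps would fail as written. First, the scalings $X_i\mapsto\lambda_i X_i$ realize only graded changes of basis, but the theorem classifies up to Lie algebra isomorphism; non-graded isomorphisms genuinely matter here, as witnessed by Remark~\ref{R:DMilTheorem}: $\g_{n,8}\cong\V_n$ for $n=7,\dots,11$ and $\g_{7,-2}\cong\m_{0,1}(7)$, $\g_{8,-2}\cong\m_{0,2}(8)$, $\g_{9,-2}\cong\m_{0,3}(9)$ --- coincidences that your residual two-parameter torus cannot detect and that were in fact overlooked even in \cite{DMil2004}. Second, your proposed verification of pairwise non-isomorphism via discrete invariants cannot succeed: every filiform algebra of dimension $n$ has the same lower central series dimensions $(n,n-2,n-3,\dots,1,0)$, and no discrete invariant can separate the members of a continuous family $\g_{n,\alpha}$ from one another; separating distinct $\alpha$ requires an invariant depending continuously on the structure constants (or a direct analysis of graded isomorphisms), and moreover the pairwise non-isomorphism you set out to prove is literally false for the list as stated, precisely because of the isomorphisms above.
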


{\begin{table}
\renewcommand{\arraystretch}{1.4}
 \begin{center}
  \begin{tabular}{c|c|l}\hline
   {\bf algebra} &{\bf dimension}&{\bf presentation} \\ \hline
   $\m_0(n)$ & $n\geq 3$ & $[X_1,X_i]=X_{i+1}$,\hfill $i=2,\ldots,n-1$ \\      \hline
   $\m_2(n)$ & $n\geq 5$& $[X_1,X_i]=X_{i+1},\hfill i=2,\ldots,n-1$ \\
   &  & $[X_2,X_i]=X_{i+2},\hfill i=3,\ldots,n-2$ \\      \hline

   $\V_n$ & $n\geq 12$& $[X_i,X_j]=\left\{
                                     \begin{array}{ll}
                                       (j-i)X_{i+j}, & \hbox{$i+j\leq n$;} \\
                                       0, & \hbox{$i+j> n$;}
                                     \end{array}
                                   \right. $\\ \hline
   $\m_{0,1}(2k+1)$  & $n=2k+1 $,  & $[X_1,X_i]=X_{i+1}$,\hfill $i=2,\ldots,2k$ \\
    &$k\geq 3$ &                     $[X_l, X_{2k-l+1}] = (-1)^{l+1}X_{2k+1}$,\hfill $l = 2,\ldots,k$         \\                                               \hline
 $\m_{0,2}(2k+2)$  & $n=2k+2 $,  & $[X_1,X_i]=X_{i+1}$,\hfill $i=2,\ldots,2k+1$ \\
    &$k\geq 3$ &                  $[X_l, X_{2k-l+1}] = (-1)^{l+1}X_{2k+1}$,\hfill $l = 2,\ldots,k$         \\
  & &                             $[X_j, X_{2k-j+2}] = (-1)^{j+1}(k-j+1)X_{2k+2}$,\hfill $j = 2,\ldots,k$ \\         \hline
    $\m_{0,3}(2k+3)$  & $n=2k+3, $  & $[X_1,X_i]=X_{i+1}$,\hfill $i=2,\ldots,2k+2$ \\
    & $k\geq 3$&                  $[X_l, X_{2k-l+1}] = (-1)^{l+1}X_{2k+1}$,\hfill $l = 2,\ldots,k$         \\
  & &                             $[X_j, X_{2k-j+2}] = (-1)^{j+1}(k-j+1)X_{2k+2}$,\hfill $j = 2,\ldots,k$ \\
 & &                              $[X_m, X_{2k-m+3}] = (-1)^{m}((m-2)k-\frac{(m-2)(m-1)}2)X_{2k+3}$, \\
& & \hfill $m = 3,\ldots,k+1$ \\
 \hline
 \end{tabular}
 \end{center}
 \vspace{1mm} \caption{Six infinite sequences of $\N$-graded  filiform Lie algebra}\label{Table:1}
\end{table}}

{\begin{table}[h]
\renewcommand{\arraystretch}{1.4}
 \begin{center}
  \begin{tabular}{c|c|l}\hline
   {\bf algebra} &{\bf restrictions}&{\bf presentation} \\ \hline
   $\g_{7,\alpha}$ & $\alpha\neq -2$ & $[X_1,X_j]=X_{j+1},\hfill 2\leq j\leq 6$ \\
  &  & $[X_2,X_3]=(2+\alpha)X_5$,\; $[X_2,X_4]=(2+\alpha)X_6$,  \\
  &  & $[X_2,X_5]=(1+\alpha)X_7$, \; $[X_3,X_4]=X_7$, \\    \hline
   $\g_{8,\alpha}$ & $\alpha\neq -2$ &  relations of $\g_{7,\alpha}$ and: \\
  &  & $[X_1,X_7]=X_8$,\; $[X_2,X_6]=\alpha X_8,$ \; $[X_3,X_5]=X_8$, \\  \hline
   $\g_{9,\alpha}$ & $\alpha\not=-\frac 52, -2$ & relations of $\g_{8,\alpha}$ and: \\
  &  & $[X_1,X_8]=X_9$,\quad $[X_2, X_7] = \frac{2\alpha^2+3\alpha-2}{2\alpha+5}X_9$,\\
& &  $[X_3, X_6] = \frac{2\alpha+2}{2\alpha+5}X_9$,\quad $[X_4, X_5] = \frac{3}{2\alpha+5}X_9$, \\  \hline
   $\g_{10,\alpha}$ & $\alpha\not=-\frac 52$ & relations of $\g_{9,\alpha}$  and: \\
  &  &  $[X_1,X_9]=X_{10}$,\quad $[X_2, X_8] = \frac{2\alpha^2+\alpha-1}{2\alpha+5}X_{10}$,\\
& &    $[X_3, X_7] = \frac{2\alpha-1}{2\alpha+5} X_{10}$, \quad $[X_4, X_6] = \frac{3}{2\alpha+5}X_{10}$, \\  \hline
   $\g_{11,\alpha}$ & $\alpha\not=-\frac 52,-1,-3$ & relations of $\g_{10,\alpha}$  and: \\
  & &       $[X_1,X_{10}]=X_{11}$,\quad $[X_2, X_9] = \frac{2\alpha^3+2\alpha^2+3}{2(\alpha^2+4\alpha+3)} X_{11}$,\\
 & & $[X_3, X_8] =\frac{4\alpha^3+8\alpha^2-8\alpha-21}{2(\alpha^2+4\alpha+3)(2\alpha+5)}X_{11}$, \\
 & & $[X_4, X_7] = \frac{ 3(2\alpha^2+4\alpha+5)}{2(\alpha^2+4\alpha+3)(2\alpha+5)}X_{11}$,\\
 & & $[X_5, X_6] = \frac{3(4\alpha+1)}{2(\alpha^2+4\alpha+3)(2\alpha+5)}X_{11}$ \\
      \hline
  \end{tabular}
 \end{center}
 \vspace{1mm} \caption{Five one-parameter families of $\N$-graded filiform Lie algebra}\label{Table:2}
\end{table}}

\begin{remark}\label{R:DMilTheorem}
 For $\alpha=-2$ we have $\g_{7,-2}\cong \m_{0,1}(7)$, $\g_{8,-2}\cong \m_{0,2}(8)$ and $\g_{9,-2}\cong \m_{0,3}(9)$. These isomorphisms seem to have been overlooked in \cite{DMil2004}. Apart from these, the algebras in Tables~\ref{Table:1} and \ref{Table:2} are pair-wise non-isomorphic. However, if we drop the restrictions on dimensions of the algebras in Table~\ref{Table:1}, then we also have the following isomorphisms: $\m_0(3)\cong\m_2(3)\cong\V_3$, $\m_0(4)\cong\m_2(4)\cong\V_4$, $\m_2(5)\cong\V_5$, $\m_2(6)\cong\V_6$, and for $\alpha=8$ we have $\g_{n,8}\cong \V_n$ where $n=7,\ldots,11$, as one can see by examining the basis $\{X_1, \frac{1}{(k-2)!\cdot 60}X_k :k=2,\ldots,n\}$.
\end{remark}


\section{Preliminaries}\label{S:prel}

Throughout this paper, $\g$ is an $\N$-graded filiform Lie algebra of dimension $n$. We fix the decomposition $\g=\bigoplus_{i=1}^n V_i$ with  $[V_1,V_i]=V_{i+1}$ for all $i>1$ and $[V_i,V_j]\subset V_{i+j}$ for all $i,j\in\N$, where for convenience, we set $V_i=0$ for $i>n$. Introduce the ideals $\g_i:=\oplus_{j\geq i} V_j$, for $i=1, \dots, n$. For $Y\in\g$, we define the \emph{degree} of $Y$, denoted $\deg(Y)$, to be the largest natural number $k$ such that $Y\in\g_k$, and  for convenience, we set $\deg(0)=\infty$.

We choose a basis $\B=\{X_1,\dots,X_n\}$ for $\g$ with $X_i\in V_i$ for  $i=1,\dots,n$. Obviously, $\deg([X_i,X_j])\geq i+j$, for all $i,j$. Consequently, if $Y_1,Y_2\in\g$ such that $\deg(Y_1)=i$ and $\deg(Y_2)=j$, then $\deg ([Y_1,Y_2])\geq i+j$.
Moreover, if $\deg([X_i,X_j])=i+j$, then $\deg([Y_1,Y_2])=i+j$.

\begin{remark}
If we take an inner product on $\g$ for which $X_1,\dots,X_n$ are orthonormal, then the subalgebra $\h$ generated by $\{X_i: i \text{ is even}\}$ is a totally geodesic subalgebra of dimension $\left\lfloor n/2\right\rfloor$; see \cite{KP}.
\end{remark}

Now assume $\g$ is equipped with an arbitrary inner product $\ip$. Applying the Gram-Schmidt orthonormalisation procedure to $\B$, starting with the element of the largest  degree, we obtain an orthonormal basis $\E=\{E_1,\dots,E_n\}$, where for each $i$ one has $\deg (E_i)=i$. Clearly, $\Span(E_k,\ldots,E_n) = \g_k $. Note that by construction, $[E_i,E_j]\in\g_{i+j}$ for all $i,j$. Furthermore, $\langle[E_1,E_i],E_{i+1}\rangle\neq 0$ for all $1<i<n$. So $E_1$ has maximal nilpotency. For more details see \cite{CHGN}.

Now let $\h$ be a totally geodesic subalgebra of $\g$ of dimension greater than~$1$. We will repeatedly use the following facts:

\begin{lemma}\label{L:facts} We have:
\begin{enumerate}[\rm (a)]
\item\label{I:e1} $E_1\in \hp$ and $\h\subset \Span(X_2,\ldots,X_n)$.
\item\label{I:eiei+1}
It is impossible that $E_i,E_{i+1}\in\h$, for any $i=2,\dots,n-1$. In particular, there is an element of degree 2 or 3 in $\hp$.
\item \label{I:ei+aei+1}
If $E_i+aE_{i+1}\in\h$, for any $i=2,\dots,n-1$, then $a=0$.
\item\label{I:kn-k}
Suppose $E_n,Y\in\h$ where $\deg(Y)=k$ for some $k=2,\ldots, n-1$. If $[X_k,X_{n-k}]=aX_n$ $(a\neq0)$, then there is no $Z\in\hp$ such that $\deg(Z)=n-k$.
\item\label{I:kn-k-1}
Suppose $E_{n-1},Y\in\h$ where $\deg(Y)=k$, for some $k=2,\ldots, n-3$. If $[X_k,X_{n-k-1}]=bX_{n-1}$ $(b\neq0)$, then there is no $Z\in\hp$ such that $\deg(Z)=n-k-1$.
\end{enumerate}
\end{lemma}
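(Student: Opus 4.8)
The plan is to prove each of the five facts by exploiting the defining totally geodesic equation~\eqref{L:basic} together with the degree filtration, testing it against carefully chosen elements of $\hp$. The unifying principle is that for $X\in\hp$ and $Y,Z\in\h$ we have $\langle[X,Y],Z\rangle+\langle[X,Z],Y\rangle=0$, and since $[E_i,E_j]\in\g_{i+j}$, degree bookkeeping forces the relevant inner products to collapse to their leading-order terms. I would first record that $\langle[E_1,E_i],E_{i+1}\rangle\neq0$ for all $1<i<n$, which is the non-degeneracy fact that drives everything.

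For part~\eqref{I:e1}, I would argue that if $E_1\in\h$ then, taking any $X\in\hp$ of minimal degree, the relation $\langle[X,E_1],E_1\rangle+\langle[E_1,E_1],X\rangle=0$ reduces to $\langle[X,E_1],E_1\rangle=0$; combined with $\h\neq\Span(E_1)$ (since $\dim\h>1$) and the maximal nilpotency of $E_1$, one derives a contradiction, so $E_1\in\hp$. That $\h\subset\Span(X_2,\ldots,X_n)=\g_2$ then follows because any element of $\h$ with a nonzero $V_1$-component would, after subtracting a multiple, place $E_1$-type behaviour into $\h$. For part~\eqref{I:eiei+1}, assuming $E_i,E_{i+1}\in\h$, I would plug $Y=E_i$, $Z=E_{i+1}$ and $X=E_1\in\hp$ into~\eqref{L:basic}: the term $\langle[E_1,E_i],E_{i+1}\rangle$ is nonzero by the fact above, while $\langle[E_1,E_{i+1}],E_i\rangle$ vanishes for degree reasons ($[E_1,E_{i+1}]\in\g_{i+2}\perp E_i$), giving a contradiction. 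The consequence that $\hp$ contains an element of degree $2$ or $3$ follows by counting: $E_2$ and $E_3$ cannot both lie in $\h$. Part~\eqref{I:ei+aei+1} is the same computation applied to $Y=Z=E_i+aE_{i+1}$ against $X=E_1$, where the cross term $2a\langle[E_1,E_i],E_{i+1}\rangle$ is the leading contribution and must vanish, forcing $a=0$.

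Parts~\eqref{I:kn-k} and~\eqref{I:kn-k-1} are the substantive cases and where I expect the real obstacle. Here the idea is to suppose for contradiction that such a $Z\in\hp$ with $\deg(Z)=n-k$ (resp.\ $n-k-1$) exists, and apply~\eqref{L:basic} with $Y$ the given degree-$k$ element of $\h$ and $Z$ the hypothetical element of $\hp$, pairing against $E_n$ (resp.\ $E_{n-1}$), which lies in $\h$. The equation~\eqref{L:basic} written for $X=Z$, and the two $\h$-elements $Y,E_n$ becomes $\langle[Z,Y],E_n\rangle+\langle[Z,E_n],Y\rangle=0$; the second term vanishes since $\deg([Z,E_n])\geq n+(n-k)>n$, so $[Z,E_n]=0$, leaving $\langle[Z,Y],E_n\rangle=0$. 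The task is then to show this last quantity is forced to be nonzero by the hypothesis $[X_k,X_{n-k}]=aX_n$ with $a\neq0$. Because $\deg(Y)=k$ and $\deg(Z)=n-k$, the bracket $[Z,Y]$ has degree exactly $n$ precisely because the structure constant $a$ is nonzero, and its $X_n$-component is a nonzero multiple of $a$ times the product of the leading coefficients of $Y$ and $Z$.

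The delicate point—and the main obstacle—is justifying that the leading coefficients of $Y$ and $Z$ are genuinely nonzero and that no cancellation occurs: one must argue that $Y$ (having degree exactly $k$) has a nonzero $E_k$-component and $Z$ (degree exactly $n-k$) a nonzero $E_{n-k}$-component, and that the only contribution to the $E_n$-coefficient of $[Z,Y]$ at top degree comes from $[E_{n-k},E_k]$, whose value is controlled by $[X_{n-k},X_k]=-aX_n\neq0$. I would phrase this cleanly using the observation recorded before Lemma~\ref{L:facts}: if $\deg([X_i,X_j])=i+j$ then $\deg([Y_1,Y_2])=i+j$ for any $Y_1,Y_2$ of degrees $i,j$, so the top-degree bracket is stable and nonvanishing. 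This yields $\langle[Z,Y],E_n\rangle\neq0$, contradicting the totally geodesic condition. Part~\eqref{I:kn-k-1} is entirely analogous, replacing $n$ by $n-1$ and $E_n$ by $E_{n-1}$, with the minor extra care that $[Z,E_{n-1}]$ now has degree $\geq(n-1)+(n-k-1)>n-1$ so again vanishes against lower-degree checks.
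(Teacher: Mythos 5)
Your treatments of parts (\ref{I:eiei+1})--(\ref{I:kn-k-1}) coincide with the paper's: for (\ref{I:eiei+1}) and (\ref{I:ei+aei+1}) you test \eqref{L:basic} against $X=E_1$ and kill the unwanted terms by degree (the factor you call $2a$ is really $a$, but that is immaterial); for (\ref{I:kn-k}) and (\ref{I:kn-k-1}) you reduce to $\langle[Z,Y],E_n\rangle=0$ (resp.\ $E_{n-1}$) and contradict it via the observation, recorded just before the lemma, that $\deg([X_k,X_{n-k}])=n$ forces $\deg([Z,Y])=n$. That is exactly the paper's argument, and the ``delicate point'' you flag about leading coefficients is precisely what that observation already settles.

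The genuine gap is in part (\ref{I:e1}). First, the statement $E_1\in\hp$ is equivalent to $\h\subset E_1^\perp=\Span(E_2,\dots,E_n)=\g_2$, i.e.\ to the assertion that \emph{no} element of $\h$ has degree $1$; it is strictly stronger than $E_1\notin\h$, since a priori $E_1$ need lie in neither $\h$ nor $\hp$. Your proposed argument only addresses the hypothesis ``$E_1\in\h$'', and even there it does not close: the identity $\langle[X,E_1],E_1\rangle=0$ is trivially consistent (it holds for any skew-symmetric-acting $X$), and ``combined with maximal nilpotency one derives a contradiction'' is an assertion, not a derivation. Your subsequent deduction of $\h\subset\g_2$ from $E_1\in\hp$ is also inverted: once $E_1\in\hp$ is known, $\h\subset\g_2$ is immediate from $E_1^\perp=\g_2$, with no need to chase ``$E_1$-type behaviour''; the hard direction is the other one. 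The paper proves (\ref{I:e1}) by citing two substantive results from the prequel \cite{CHGN}: Lemma~4.4 there (every degree-one element of an $\N$-graded filiform algebra has maximal nilpotency) and Lemma~4.6 there (a totally geodesic subalgebra of dimension greater than one is contained in $\g_2$). Neither is reproduced or replaced by your sketch, so part (\ref{I:e1}) — on which all the later parts lean, since they all use $E_1\in\hp$ — is not established.
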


\begin{proof}
\eqref{I:e1} By \cite[Lemma 4.4]{CHGN}, the elements of degree one have maximal nilpotency. Then by \cite[Lemma 4.6]{CHGN}, $\h\subset \g_2$. Hence $E_1\in \hp$; see  \cite[Remark 4.7]{CHGN}. So $\h\subset  \Span(E_2,\ldots,E_n)= \Span(X_2,\ldots,X_n)$.

\eqref{I:eiei+1} Suppose that $E_i,E_{i+1}\in\h$, for some $i=2,\dots,n-1$. As $E_1\in\hp$ then
\[
2\langle\nabla_{E_i} E_{i+1},E_1\rangle=\langle[E_1,E_i],E_{i+1}\rangle+\langle[E_1,E_{i+1}],E_i\rangle=\langle[E_1,E_i],E_{i+1}\rangle\neq 0,
\]
which would contradict \eqref{L:basic}. Therefore, there exists an element $Z\in\hp$ of degree 2 or 3, since otherwise  $E_2,E_3\in\h$.

\eqref{I:ei+aei+1} From \eqref{L:basic} we have $0=\langle[E_1,E_i+aE_{i+1}],E_i+aE_{i+1}\rangle=a\langle[E_1,E_i],E_{i+1}\rangle$, so $a=0$.

\eqref{I:kn-k} By \eqref{L:basic}, if $Y,E_n\in\h$, $\deg(Y)=k$ and there is $Z\in\hp$ such that $\deg(Z)=n-k$, then we have
$$
0=\langle[Z,Y],E_{n}\rangle+\langle[Z,E_{n}],Y\rangle=\langle[Z,Y],E_{n}\rangle.
$$ However, since $\deg([X_k,X_{n-k}])=n$, we have $\deg([Z,Y])=n$, which is a  contradiction.

\eqref{I:kn-k-1}   By \eqref{L:basic}, if $Y,E_{n-1}\in\h$, $\deg(Y)=k$ and there is $Z\in\hp$ such that $\deg(Z)=n-k-1$, then we have
$$
0=\langle[Z,Y],E_{n-1}\rangle+\langle[Z,E_{n-1}],Y\rangle=\langle[Z,Y],E_{n-1}\rangle.
$$ But since $\deg([X_k,X_{n-k-1}])=n-1$, we have  $\deg([Z,Y])=n-1$, which is a  contradiction.
\end{proof}


\begin{lemma}\label{L:zn-2,zn-1}
Suppose that $X_n\not\in\h$ and  there are elements of degree $p,\ldots,n-1$ in $\h$, for some $p\geq 2$. Then there are no elements of degree $p+2,\ldots,n$ in $\hp$.
\end{lemma}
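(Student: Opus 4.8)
The plan is to exploit the hypotheses to force $\h\cap\g_p$ into a very rigid normal form and then contradict Lemma~\ref{L:facts}\eqref{I:eiei+1}. First I would record the degree data. Since $\h$ contains an element of each degree $p,p+1,\dots,n-1$ and, because $X_n\notin\h$ (equivalently $\h\cap\g_n=0$), no element of degree $n$, the subspace $\h\cap\g_p\subset\g_p=\Span(E_p,\dots,E_n)$ has leading degrees exactly $\{p,\dots,n-1\}$ and dimension $n-p$. Putting a basis of $\h\cap\g_p$ into reduced echelon form with respect to $E_p,\dots,E_n$, the pivot positions are $p,\dots,n-1$ and the single free position is $n$; hence $\h\cap\g_p$ has a basis $H_k=E_k+\delta_k E_n$, $k=p,\dots,n-1$. (By Lemma~\ref{L:facts}\eqref{I:ei+aei+1} applied to $H_{n-1}=E_{n-1}+\delta_{n-1}E_n$ one even gets $\delta_{n-1}=0$.) The crucial feature is that each $H_k$ differs from the honest basis vector $E_k$ only in the one direction $E_n$.

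Next, suppose for contradiction that $\hp$ contains an element $Z$ of some degree $r$ with $p+2\le r\le n$, and set $z_n=\<Z,E_n\>$. Orthogonality of $Z$ to each $H_k$ is the engine. For $k<r$ the vector $Z$ has no $E_k$-component, so $0=\<Z,H_k\>=z_n\delta_k$; thus, once I know $z_n\neq0$, I may conclude $\delta_k=0$, i.e.\ $E_k\in\h$, for every $k\in\{p,\dots,r-1\}$. Since $r-1\ge p+1$, this yields $E_p,E_{p+1}\in\h$, two consecutive basis vectors lying in $\h$, which contradicts Lemma~\ref{L:facts}\eqref{I:eiei+1}.

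The one point that needs care — and the crux of the argument — is showing $z_n\neq0$, since otherwise the relations $z_n\delta_k=0$ are vacuous. When $r\le n-1$ the index $k=r$ lies in $\{p,\dots,n-1\}$, and there $0=\<Z,H_r\>=1+z_n\delta_r$ (the $1$ coming from the matched leading terms $E_r$), forcing $z_n\delta_r=-1$ and in particular $z_n\neq0$. When $r=n$ one has $Z=z_nE_n$ with $z_n\neq0$ directly, and orthogonality to every $H_k$ then even gives $\delta_k=0$ for all $k$. In every case the contradiction is reached, so no such $Z$ exists and $\hp$ has no element of degree $p+2,\dots,n$. I expect the remaining work to be purely bookkeeping: justifying the reduced-echelon normal form and tracking the $E_n$-coefficients; conceptually the whole lemma collapses to the single observation that $\h\cap\g_p$ deviates from $\Span(E_p,\dots,E_{n-1})$ only along $E_n$, so orthogonality to a vector $Z$ of high degree $r$ is forced to annihilate the low-degree coefficients $\delta_p,\dots,\delta_{r-1}$.
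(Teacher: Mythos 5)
Your proof is correct and is essentially the paper's argument: both reduce $\h\cap\g_p$ to the normal form $E_k+a_kE_n$ ($k=p,\dots,n-1$) and use orthogonality with a hypothetical $Z\in\hp$ of degree $r\ge p+2$ to force two consecutive coefficients to vanish, contradicting Lemma~\ref{L:facts}\eqref{I:eiei+1}; the paper merely packages this as a downward induction on the degree, whereas you run the pairing $\<Z,H_k\>=z_n\delta_k$ directly for all $k<r$ (and your ``$1$'' in $\<Z,H_r\>=1+z_n\delta_r$ should strictly be the nonzero number $\<Z,E_r\>$, which changes nothing).
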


\begin{proof}
If $X_n\not\in\h$ and there are elements $Y_i\in\h$ with $\deg(Y_i)=i$ for $i=p,\ldots,n-1$, then by taking linear combinations if necessary, we may take
$Y_i:=E_i+a_iE_n$ for some $a_i\in\R$. Note that $a_{n-1}=0$ by Lemma \ref{L:facts}\eqref{I:ei+aei+1}, so $E_{n-1}\in\h$. Moreover  $a_{n-2}\neq 0$, as otherwise we would have $E_{n-2},E_{n-1}\in\h$, contradicting Lemma \ref{L:facts}\eqref{I:eiei+1}. So there are no elements of degree $n-1$ or $n$ in $\hp$.

The rest of the proof is done by   induction. Assume that for some $k$ with $p+2 < k < n$, there are no elements of degree $k,\ldots,n$ in $\hp$, but there is some $Z_{k-1}\in\hp$ with $\deg(Z_{k-1})=k-1$. Since $Z_{k-1}$ and $Y_{k-1}$ are orthogonal, we have  $a_{k-1}\neq 0$ and $\<Z_{k-1}, E_n\>\neq 0$. Then from the orthogonality  of $\hp$ and $\h$ we obtain   $a_{k-2},a_{k-3}=0$ giving  $E_{k-2},E_{k-3}\in\h$, which is impossible, by Lemma \ref{L:facts}\eqref{I:eiei+1}.
\end{proof}


Let $\mathcal O_1$ be the family of $n$-dimensional $\N$-graded  filiform Lie algebras for which the  basis $\B=\{X_1,\ldots,X_n\}$
may be chosen so that
\begin{equation}\label{E:O1}
\begin{split}
[X_1,X_i]& =X_{i+1}, \quad i=2,\ldots,n-1;\\
[X_i,X_{n-i}]&=\alpha_iX_n,\quad  (\alpha_i\neq 0),\quad i=2,\ldots, \lfloor (n-1)/2\rfloor.
\end{split}
\end{equation}

Additionally, denote by $\mathcal O_2$ the subfamily of $\mathcal O_1$ comprised of algebras for which $\B$
may be chosen so that conditions \eqref{E:O1} and condition
\begin{equation}\label{E:O2}
[X_2,X_i]=\beta_iX_{i+2},\quad (\beta_i\neq 0) \quad i=3,\ldots,n-2
\end{equation}
are satisfied.

\begin{lemma}\label{L:Xnnotinh}
Suppose that $X_n\not\in\h$ and that $n\geq 5$.
{\ }

\begin{enumerate}[\rm (a)]
\item \label{I:2.1a} If $\g$ belongs to $\mathcal O_1$, then $\dim(\h)\leq \lfloor n/2\rfloor$,
\item \label{I:2.1b} If $\g$ belongs to  $\mathcal O_2$, then $\dim(\h) \leq \lfloor (n -1)/2\rfloor$.
\end{enumerate}
\end{lemma}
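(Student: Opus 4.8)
The plan is to reduce everything to counting the degrees occurring in $\h$ and to run a single ``pairing'' argument. First I record that for any subspace $W\sq\g$ the number of distinct degrees among its nonzero vectors equals $\dim W$ (put a basis of $W$ in echelon form with respect to the flag $\g_1\supset\g_2\supset\cdots$). Writing $D_W=\{\deg Y: Y\in W,\ Y\ne0\}$ we thus have $\dim\h=|D_\h|$. By Lemma~\ref{L:facts}\eqref{I:e1}, $1\notin D_\h$, and since $\g_n=\Span(E_n)$ the hypothesis $X_n\notin\h$ is exactly $n\notin D_\h$.

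For part~(a) the key claim is that for each $k=1,\dots,\lfloor(n-1)/2\rfloor$ at most one of $k,n-k$ lies in $D_\h$. Indeed, \eqref{E:O1} (together with $[X_1,X_{n-1}]=X_n$) gives $\deg[X_k,X_{n-k}]=n$, so by the remark in Section~\ref{S:prel} we have $[E_k,E_{n-k}]=\gamma_kE_n$ with $\gamma_k\ne0$. If $Y,Y'\in\h$ had degrees $k$ and $n-k$, then every bracket of higher-degree terms lands in $\g_{n+1}=0$, so only the leading terms survive and $[Y,Y']=c\gamma_kE_n$ for some $c\ne0$; as $\h$ is a subalgebra this forces $E_n\in\h$, i.e.\ $n\in D_\h$, a contradiction. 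The disjoint pairs $\{k,n-k\}$ cover $\{1,\dots,n-1\}$ when $n$ is odd and cover $\{1,\dots,n-1\}\setminus\{n/2\}$ when $n$ is even. Taking at most one element from each pair, allowing the unpaired middle degree $n/2$ when $n$ is even, and using $n\notin D_\h$, we obtain $|D_\h|\le\lfloor n/2\rfloor$ in both parities. This proves~(a), and notably it uses total geodesy only through $1\notin D_\h$.

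For part~(b), when $n$ is odd $\lfloor(n-1)/2\rfloor=\lfloor n/2\rfloor$, so the bound coincides with~(a) and $\mathcal O_2\sq\mathcal O_1$ gives the claim. When $n$ is even it remains to exclude $\dim\h=n/2$. In that extremal situation the pairing is tight: $n/2\in D_\h$, each pair $\{k,n-k\}$ contributes exactly one degree, and since $1\notin D_\h$ the pair $\{1,n-1\}$ forces $n-1\in D_\h$, whence $E_{n-1}\in\h$ by Lemma~\ref{L:facts}\eqref{I:ei+aei+1}. Here the extra relations \eqref{E:O2} enter as a degree-raising-by-two device: for a degree-$2$ element $Y_2\in\h$ one has $\deg[E_2,E_j]=j+2$ for $3\le j\le n-2$, so $\ad(Y_2)$ carries a degree-$j$ vector of $\h$ to a degree-$(j+2)$ vector of $\h$. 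Starting from a suitable seed degree in $D_\h$ and walking up in steps of two, one reaches $n-2\in D_\h$; since $2\in D_\h$ in this branch, the pair $\{2,n-2\}$ of part~(a) is then violated, forcing $X_n\in\h$. The complementary branch $2\notin D_\h$ (hence $n-2\in D_\h$) is treated with $E_{n-1}\in\h$, the relation $[X_2,X_{n-3}]=\beta_{n-3}X_{n-1}$ and Lemma~\ref{L:facts}\eqref{I:kn-k-1}, which forbid certain low degrees in $\hp$; combined with Lemma~\ref{L:zn-2,zn-1}, Lemma~\ref{L:facts}\eqref{I:eiei+1} (a degree $2$ or $3$ element of $\hp$) and the count $\dim\hp=n/2$, this yields a contradiction.

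The hard part is the even case of part~(b). Part~(a) is essentially free once the pairing is isolated, but exploiting \eqref{E:O2} in~(b) requires an even-degree seed for the propagation, which is available automatically only in high enough dimension; in the lowest even dimensions and in the branch $2\notin D_\h$ one must instead squeeze $\hp$ directly via \eqref{L:basic} and Lemmas~\ref{L:facts} and~\ref{L:zn-2,zn-1}. I expect the careful bookkeeping of these subcases, and the verification that each collapses to the forbidden conclusion $E_n\in\h$ or to a degree clash in $\hp$, to be the main obstacle.
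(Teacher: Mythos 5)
Your part (a) is correct and is exactly the paper's argument: pair off the degrees $k$ and $n-k$, observe that $[Y,Y']$ is a nonzero multiple of $X_n$ whenever $\deg Y+\deg Y'=n$ and $[X_{\deg Y},X_{\deg Y'}]\neq 0$, and count. Likewise, your main branch of (b) ($n$ even, $\dim\h=n/2$, a degree-$2$ element in $\h$) is the paper's: climb by $\ad(Y_2)$ from the seed $n/2$ (or $n/2\pm1$ when $n/2$ is odd) up to degree $n$. For $n\geq 8$ this is complete.

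However, part (b) has two genuine gaps, both of which you flag but neither of which you close. First, when $n=6$ and $Y_2\in\h$ the tight configuration is $D_\h=\{2,3,5\}$ and the climbing argument degenerates: the only even seed is $2$ itself and $\ad(Y_2)Y_2=0$, while from $3$ you reach only $5$. The paper handles this by a direct computation: with $\h=\Span(Y_2,Y_3,Y_5)$ one shows $\langle E_6,Y_5\rangle=0$ and then rules out a degree-$2$ or degree-$3$ element of $\hp$ via $\langle[Z_2,Y_3],Y_5\rangle\neq0$ and $\langle[Z_3,Y_2],Y_5\rangle\neq0$, leaving $E_2,E_3\in\h$, which contradicts Lemma~\ref{L:facts}\eqref{I:eiei+1}. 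Second, in the branch $2\notin D_\h$ your proposed route through $[X_2,X_{n-3}]=\beta_{n-3}X_{n-1}$ and Lemma~\ref{L:facts}\eqref{I:kn-k-1} only produces a contradiction when $n-3\in D_\h$ (it forbids a degree-$2$ element of $\hp$, clashing with $E_2\in\hp$); in the tight configuration exactly one of $3,n-3$ lies in $D_\h$, and when it is $3$ your lemma has nothing to bite on, since the only relations guaranteed to hit $X_{n-1}$ in $\mathcal O_2$ involve $X_1$ or $X_2$. The vague appeal to Lemma~\ref{L:zn-2,zn-1} and a dimension count does not obviously rescue this. The paper's missing ingredient is a direct use of \eqref{L:basic} with $Y=Z$: since $n-1,n-2\in D_\h$, the identities $\langle[E_1,Y_{n-1}],Y_{n-1}\rangle=0$ and $\langle[E_2,Y_{n-2}],Y_{n-2}\rangle=0$ kill the $E_n$-components of $Y_{n-1}$ and $Y_{n-2}$, forcing $E_{n-1},E_{n-2}\in\h$ and contradicting Lemma~\ref{L:facts}\eqref{I:eiei+1}. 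You should incorporate that step (or an equivalent one) to make (b) complete.
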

\begin{proof}
(a) Suppose $\g$ belongs to $\mathcal O_1$. Since $X_n\not\in\h$, there is no elements of $\h$ of degree $n$ and if there is an element of $\h$ of degree $k$, then as $\h$ is a subalgebra, there are no elements of  $\h$ of degree $n-k$ except if $n$ is even and $k=\frac n2$.
Therefore, if $n$ is odd, $\dim(\h)\leq \frac{n-1}2$, and if $n$ is even,  $\dim(\h)\leq \frac{n}2$. This proves (a).

(b) Suppose $\g$ belongs to $\mathcal O_2$. From the proof of (a), we may assume that $n$ is even and that $\h$ contains an element $Y_{\frac n2}$ of degree $\frac n2$.
First suppose there is an element $Y_2\in\h$ with $\deg(Y_2)=2$. Let $m:=\frac n2$. If $m$ is even, then $U=\ad^{\frac m2}(Y_2)(Y_{m})\in\h$ and $\deg(U)=n$ contradicting the assumption. If $m$ is odd and $n\neq 6$, then since $\h$ would contain either an element $Y_{m -1}$  of degree $m -1$ or an element $Y_{m +1}$ of degree $m +1$, we would obtain either $V_1=\ad^{\frac {m-1}2+1}(Y_2)(Y_{m-1})\in\h$ and $\deg(V_1)=n$ or $V_2=\ad^{\frac {m-1}2}(Y_2)(Y_{m+1})\in\h$ and $\deg(V_2)=n$, respectively. In each case, we  get a contradiction with the assumption.
Let us discuss the case $n=6$. By Lemma \ref{L:facts}\eqref{I:e1}, we have $E_1\in\h^\perp$. Then from the above argument, we may assume that $\h=\Span(Y_2,Y_3,Y_5)$ for some $Y_i$ such that $\deg(Y_i)=i$. By Lemma \ref{L:facts}\eqref{I:ei+aei+1}, we have $\langle E_6,Y_5\rangle=0$. If there is $Z_2\in\h^\perp$ with $\deg(Z_2)=2$, then
\begin{equation*}
2\langle\nabla_{Y_3}Y_5,Z_2\rangle=\langle[Z_2,Y_3],Y_5\rangle\not=0,
\end{equation*}
which is impossible by \eqref{L:basic}. If there is $Z_3\in\h^\perp$ with $\deg(Z_3)=3$, then
\begin{equation*}
0=2\langle\nabla_{Y_2}Y_5,Z_3\rangle=\langle[Z_3,Y_2],Y_5\rangle,
\end{equation*}
is also impossible. The remaining case is $\h=\Span(E_2,E_3,E_5)$, which is   impossible by Lemma \ref{L:facts}\eqref{I:eiei+1}.

If there are no elements of degree 2 in $\h$ then $E_2\in\hp$ and in order to be a totally geodesic subalgebra of dimension $\frac {n}2$, $\h$ has to contain an element of degree $n-1$ as well as an element of degree $n-2$, say $Y_{n-1}$ and $Y_{n-2}$ respectively. By \eqref{L:basic}, conditions
\begin{equation*}
\langle\nabla_{Y_{n-1}}Y_{n-1},E_1\rangle=0\quad  \text {and }\quad \langle\nabla_{Y_{n-2}}Y_{n-2},E_2\rangle=0
\end{equation*}
give
\begin{equation*}
\langle X_n,Y_{n-1}\rangle=0\quad  \text {and }\quad \langle X_n,Y_{n-2}\rangle=0.
\end{equation*}
But then $E_{n-1},E_{n-2}\in\h$, contradicting Lemma \ref{L:facts}\eqref{I:eiei+1}.
\end{proof}

\begin{lemma}\label{L:g/Xn}
Let $\g$ be a Lie algebra with an inner product $\ip$ and let $\h \subset \g$ be a totally geodesic subalgebra.

\begin{enumerate}[\rm (a)]
  \item \label{I:g/Xna}
  If $\g$ is nilpotent, then the projection of its center to $\h$ lies in the center of $\h$.

  \item \label{I:g/Xnb}
  Let $\mathfrak{i} \subset \g$ be an ideal. Consider the Lie algebra $\overline{\g}:=\g/\mathfrak{i}$ and the quotient map $\pi:\g\to\overline{\g}$. Suppose that $\mathfrak{i}=(\mathfrak{i} \cap \h) \oplus (\mathfrak{i} \cap \h^\perp)$. Then there is an inner product on $\overline{\g}$ for which $\overline{\h}:=\pi(\h)$ is a totally geodesic subalgebra.
\end{enumerate}
\end{lemma}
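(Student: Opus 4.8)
The plan is to treat the two parts independently: part (a) reduces to a skew-symmetry observation combined with nilpotency, while part (b) amounts to transporting the inner product onto $\mathfrak i^{\perp}$ and bookkeeping the orthogonal decompositions.

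For (a), let $C$ be an element of the center $\z$ and decompose it with respect to $\g=\h\oplus\hp$ as $C=A+B$, where $A:=\pi_{\h}(C)\in\h$ and $B:=\pi_{\hp}(C)\in\hp$. Since $C$ is central, $[A,H]=-[B,H]$ for every $H\in\h$. Because $\h$ is a subalgebra, the operator $T:=\ad A|_{\h}$ maps $\h$ into itself, and I claim it is skew-symmetric for the inner product restricted to $\h$. Indeed, for $H,H'\in\h$ the identity above together with the totally geodesic condition \eqref{L:basic}, applied to $X=B\in\hp$, $Y=H$, $Z=H'$, gives
\[
\langle TH,H'\rangle=-\langle[B,H],H'\rangle=\langle[B,H'],H\rangle=-\langle TH',H\rangle,
\]
so $T^{*}=-T$. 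The decisive point is now that $\g$ is nilpotent, so $\ad A$ is a nilpotent endomorphism of $\g$; as $\h$ is $\ad A$-invariant, its restriction $T$ is nilpotent too. A real skew-symmetric operator is normal, hence diagonalizable over $\C$ with purely imaginary eigenvalues, and being also nilpotent it has only the eigenvalue $0$, forcing $T=0$ (equivalently, $\tr(TT^{*})=-\tr(T^{2})=0$ since $T^{2}$ is nilpotent). Thus $[A,H]=0$ for all $H\in\h$, i.e.\ $A=\pi_{\h}(C)$ lies in the center of $\h$, which is the assertion.

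For (b), I would first equip $\overline{\g}=\g/\mathfrak i$ with the inner product obtained by identifying it isometrically with $\mathfrak i^{\perp}$, the orthogonal complement of $\mathfrak i$ in $\g$, through the linear isomorphism $\pi|_{\mathfrak i^{\perp}}$. The hypothesis $\mathfrak i=(\mathfrak i\cap\h)\oplus(\mathfrak i\cap\hp)$ is exactly what makes this geometry compatible: writing $\h=(\mathfrak i\cap\h)\oplus\h_{0}$ and $\hp=(\mathfrak i\cap\hp)\oplus\hp_{0}$ for the orthogonal complements taken inside $\h$ and inside $\hp$, one gets $\mathfrak i^{\perp}=\h_{0}\oplus\hp_{0}$ with $\h_{0}\perp\hp_{0}$ (a dimension count closes the argument). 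Consequently $\overline{\h}=\pi(\h)=\pi(\h_{0})$, since $\mathfrak i\cap\h\subset\ker\pi$, and $\pi|_{\h_{0}}$ is an isometry onto $\overline{\h}$, whose orthogonal complement in $\overline{\g}$ is precisely $\pi(\hp_{0})$.

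It then remains to verify \eqref{L:basic} for $\overline{\h}$. Take $\bar X=\pi(X)$, $\bar Y=\pi(Y)$, $\bar Z=\pi(Z)$ with $X\in\hp_{0}$ and $Y,Z\in\h_{0}$. Using $[\bar X,\bar Y]=\pi([X,Y])$ and the fact that $\pi$ is an isometry on $\mathfrak i^{\perp}$, together with the observation that $Z\in\mathfrak i^{\perp}$ is orthogonal to the $\mathfrak i$-component of $[X,Y]$, one checks $\langle[\bar X,\bar Y],\bar Z\rangle=\langle[X,Y],Z\rangle$, and symmetrically $\langle[\bar X,\bar Z],\bar Y\rangle=\langle[X,Z],Y\rangle$. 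Their sum equals $\langle[X,Y],Z\rangle+\langle[X,Z],Y\rangle$, which vanishes by the totally geodesic condition for $\h$ in $\g$, since $X\in\hp$ and $Y,Z\in\h$. Hence $\overline{\h}$ is totally geodesic. The only delicate step, and the place I expect care to be needed, is the orthogonal bookkeeping that lets representatives of $\overline{\h}$ and of $\overline{\h}^{\perp}$ be chosen inside $\mathfrak i^{\perp}$; this is exactly where the splitting hypothesis $\mathfrak i=(\mathfrak i\cap\h)\oplus(\mathfrak i\cap\hp)$ is indispensable.
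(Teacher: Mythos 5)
Your proof is correct and follows essentially the same route as the paper: for (a) you show $\ad(\pi_\h(C))|_\h$ is skew-symmetric via \eqref{L:basic} and kill it using nilpotency, exactly as in the paper; for (b) you identify $\overline\g$ with $\mathfrak i^\perp$ carrying the projected bracket and use the splitting hypothesis to see that $\mathfrak i^\perp$ decomposes orthogonally into the complements of $\mathfrak i\cap\h$ in $\h$ and of $\mathfrak i\cap\hp$ in $\hp$, which is the paper's construction of $\g'=\h'\oplus\h'_\perp$.
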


\begin{proof} (a) Let $Z$ be a vector from the center of $\g$ and let $Z=Z_\h + Z_\perp, \; Z_\h \in \h, Z_\perp \in \h^\perp$. By \eqref{L:basic}, $\<[Z_\perp, X], X\>= 0$, for all $X \in \h$, hence $\<[Z_\h, X], X\>= 0$, for all $X \in \h$. As $\h$ is a subalgebra, it is an invariant subspace of the operator $\ad(Z_\h)$. Moreover, the restriction of $\ad(Z_\h)$ to $\h$ is both nilpotent and skew-symmetric, hence is zero, so $[Z_\h,\h]=0$.

(b) Let $\h'$ and $\h'_\perp$ be the orthogonal complements to $\mathfrak{i}$ in $\h$ and in $\h^\perp$ respectively and let $\g'=\h' \oplus \h'_\perp$. We equip the linear space $\g'$ with the inner product induced from that on $\g$ and with a bilinear skew-symmetric map $[\cdot,\cdot]'$ defined by $[X,Y]'=\pi'([X,Y])$, where $\pi':\g \to \g'$ is the orthogonal projection. As $\mathfrak{i}$ is an ideal, $[\cdot,\cdot]'$ is a Lie bracket, which turns $\g'$ into a Lie algebra isomorphic to $\overline{\g}$, with $\h' \subset \g'$ a subalgebra isomorphic to $\overline{\h}$. Moreover, as $\pi'(\h)=\h'$ and $\pi'(\h^\perp)=\h'_\perp$, the validity of condition \eqref{L:basic} for the vectors from $\h'$ and $\h'_\perp$ follows from that for the vectors from $\h$ and $\h^\perp$. \qedhere
\end{proof}

\begin{lemma}\label{L:Xninh}
Suppose that $X_n\in\h$ and that $n\geq 6$.

\begin{enumerate}[\rm (a)]
\item \label{I:Xninha} If $n$ is even and $\g/\Span(X_n)\in \mathcal O_1$, then $\dim(\h)\leq n/2$,
\item \label{I:Xninhb} If $n$ is odd and $\g/\Span(X_n)\in \mathcal O_2$, then $\dim(\h) \leq  (n -1)/2$.
\end{enumerate}
\end{lemma}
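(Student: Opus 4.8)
The plan is to reduce both parts to the already-established case $X_n\notin\h$ of Lemma~\ref{L:Xnnotinh} by passing to the quotient by the centre. Since $\g$ is $\N$-graded filiform, $V_n=\Span(X_n)=\g_n$ is central (indeed $[V_i,V_n]\sq V_{i+n}=0$), so $\mathfrak{i}:=\Span(X_n)$ is a central ideal. Because $X_n\in\h$ we have $\mathfrak{i}\cap\h=\mathfrak{i}$ and $\mathfrak{i}\cap\hp=0$, so the splitting hypothesis $\mathfrak{i}=(\mathfrak{i}\cap\h)\oplus(\mathfrak{i}\cap\hp)$ of Lemma~\ref{L:g/Xn}\eqref{I:g/Xnb} is automatic. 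Applying that lemma produces an inner product on $\overline{\g}=\g/\mathfrak{i}$ for which $\overline{\h}=\pi(\h)$ is a totally geodesic subalgebra; and since $\ker\pi=\mathfrak{i}\subset\h$, we get $\dim\overline{\h}=\dim\h-1$. We may assume $\dim\h\geq 3$, so that $\dim\overline{\h}\geq 2$ and the running hypotheses of Lemma~\ref{L:Xnnotinh} apply, for if $\dim\h\leq 2$ the asserted bounds hold trivially as $n\geq 6$.

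The crux is to verify that $\overline{\h}$ does not contain the top element of $\overline{\g}$, equivalently that $\overline{\h}$ has no element of degree $n-1$; this is exactly the analogue of the hypothesis ``$X_n\notin\h$'' for $\overline{\g}$, whose top degree is $n-1$. I would argue this directly in $\g$: if $\h$ contained an element of degree $n-1$, it would have the form $aE_{n-1}+bE_n$ with $a\neq 0$, and Lemma~\ref{L:facts}\eqref{I:ei+aei+1} (with $i=n-1$) would force $b=0$, giving $E_{n-1}\in\h$; but $E_n$ is a scalar multiple of $X_n\in\h$, so $E_n\in\h$ as well, and $E_{n-1},E_n\in\h$ contradicts Lemma~\ref{L:facts}\eqref{I:eiei+1}. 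Hence $\h$ has no element of degree $n-1$. Since $\pi$ preserves the degree of any element of degree $\leq n-1$ and kills $V_n$, it follows that $\overline{\h}$ likewise has no element of degree $n-1$, i.e. the top basis vector of $\overline{\g}$ is not in $\overline{\h}$.

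With this in hand I would simply invoke Lemma~\ref{L:Xnnotinh} for $\overline{\g}$, which has dimension $n-1\geq 5$. In part (a), $n$ is even and $\overline{\g}\in\mathcal O_1$, so part~\eqref{I:2.1a} gives $\dim\overline{\h}\leq\lfloor (n-1)/2\rfloor=(n-2)/2$, whence $\dim\h\leq n/2$. In part (b), $n$ is odd and $\overline{\g}\in\mathcal O_2$, so part~\eqref{I:2.1b} gives $\dim\overline{\h}\leq\lfloor (n-2)/2\rfloor=(n-3)/2$, whence $\dim\h\leq (n-1)/2$. I expect the only delicate points to be the bookkeeping just described: checking that the quotient inner product genuinely makes $\overline{\h}$ totally geodesic (handled by Lemma~\ref{L:g/Xn}\eqref{I:g/Xnb}), that the top degree of $\overline{\g}$ is truly absent from $\overline{\h}$, and that the floor arithmetic combined with the dimension shift $\dim\h=\dim\overline{\h}+1$ reproduces exactly the stated bound for each parity of $n$.
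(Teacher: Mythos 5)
Your proof is correct and follows essentially the same route as the paper: quotient by the central ideal $\Span(X_n)$, invoke Lemma~\ref{L:g/Xn}\eqref{I:g/Xnb} to transfer total geodesy to $\overline{\g}$, observe that the top basis vector of $\overline{\g}$ is not in $\overline{\h}$ (the paper deduces this directly from Lemma~\ref{L:facts}\eqref{I:eiei+1}, since otherwise $X_{n-1},X_n\in\h$), and then apply Lemma~\ref{L:Xnnotinh} to the $(n-1)$-dimensional quotient. Your extra bookkeeping (verifying the splitting hypothesis, the degenerate case $\dim\h\le 2$, and the floor arithmetic) is all sound and merely makes explicit what the paper leaves implicit.
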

\begin{proof} Consider the quotient map $\pi:\g\to\overline{\g}:=\g/\Span(X_n)$.
By Lemma \ref{L:g/Xn}\eqref{I:g/Xnb}, there is an inner product on $\overline{\g}$ for which $\overline\h:=\pi(\h)$  is a totally geodesic subalgebra.  Let $\overline X_i:=\pi(X_i)$ for all $i=1,\ldots,n-1$. Note that $\overline X_{n-1}\not\in\overline\h$ since otherwise we would have $X_{n-1},X_n\in\h$, contradicting Lemma \ref{L:facts}\eqref{I:eiei+1}.
Hence if $n$ is even and $\overline{\g}\in \mathcal O_1$, then $\dim(\overline\h)\leq \lfloor (n-1)/2\rfloor=(n-2)/2$ by Lemma \ref{L:Xnnotinh}\eqref{I:2.1a}, so $\dim(\h)\leq n/2$.
Similarly, if $n$ is odd and $\overline{\g}\in \mathcal O_2$, then $\dim(\overline\h)\leq \lfloor (n-3)/2\rfloor=(n-3)/2$ by Lemma \ref{L:Xnnotinh}\eqref{I:2.1b}, so $\dim(\h)\leq  (n -1)/2$.
\end{proof}


\section{Algebras from Table \ref{Table:1}}\label{S:table1}
 We treat the algebras in the order they appear in Table \ref{Table:1}. First recall that by  \cite[Theorem 1.17]{CHGN}, for all $n\geq 3$, the Lie algebra $\m_0(n)$ possesses
an inner product relative to which $\m_0(n)$ has a totally geodesic subalgebra of
codimension two. This result is optimal since by  \cite[Proposition 1.13]{CHGN}, filiform Lie algebras have no totally geodesic subalgebras of
codimension one.

 \begin{remark}\label{R:codim}
 With the exception of $\m_0(n)$, the algebras of  Tables \ref{Table:1} and  \ref{Table:2}  have no totally geodesic subalgebras of codimension one, by  \cite[Proposition 1.13]{CHGN}, and none of codimension two, by \cite[Theorem 1.18]{CHGN}.
 \end{remark}

\begin{theorem}\label{T:m2n}
 If $\h$ is a proper totally geodesic subalgebra of $\m_2(n)$, then $\dim(\h)\leq \frac{n}2$.
\end{theorem}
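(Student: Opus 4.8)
The plan is to exploit the very special bracket structure of $\m_2(n)$. First I would record that the only non-vanishing brackets are $[X_1,X_j]=X_{j+1}$ and $[X_2,X_j]=X_{j+2}$: a short Jacobi computation (e.g. $[X_3,X_j]=[[X_1,X_2],X_j]=[X_1,X_{j+2}]-[X_2,X_{j+1}]=0$, then induction) shows $[X_i,X_j]=0$ whenever $i,j\geq 3$. Hence $\ad(X_j)=0$ for all $j\geq 3$, so for every $Y\in\h\subseteq\g_2$ one has $\ad(Y)=y_2\ad(X_2)$, where $y_2$ is the $X_2$-component of $Y$; thus $y_2\neq0$ exactly when $\deg(Y)=2$. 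Since $\ad(X_2)$ raises degree by $2$ on $\g_3$ (with non-zero leading coefficient on degrees $3,\dots,n-2$), this gives the key closure property: if $\h$ contains an element $Y_2$ of degree $2$ and an element of degree $k$ with $3\leq k\leq n-2$, then $\ad(Y_2)$ sends the latter to an element of $\h$ of degree exactly $k+2$. Write $D\subseteq\{2,\dots,n\}$ for the set of degrees occurring in $\h$, so $\dim(\h)=|D|$ and $1\notin D$ by Lemma \ref{L:facts}\eqref{I:e1}. Note that $\m_2(n)\notin\mathcal O_1$ once $n\geq 7$ (the brackets $[X_i,X_{n-i}]$ vanish for $3\leq i\leq n-3$), so Lemmas \ref{L:Xnnotinh} and \ref{L:Xninh} do not apply directly and a bespoke argument is needed; the cases $n=5,6$, where $\m_2(n)\in\mathcal O_2$, serve as base cases via those lemmas.

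The core case is $X_n\notin\h$ together with $2\in D$. Here $n\notin D$, and the closure property shows that $D$ cannot contain any $k$ with $3\leq k\leq n-2$ and $k\equiv n\pmod 2$: repeatedly applying $\ad(Y_2)$ would otherwise march up to degree $n$, forcing $n\in D$. Therefore
\[
D\subseteq\{2\}\cup\{k:3\leq k\leq n-2,\ k\not\equiv n \bmod 2\}\cup\{n-1\},
\]
and a direct count gives $|D|\leq\lfloor n/2\rfloor$ in both parities, i.e. $\dim(\h)\leq\lfloor n/2\rfloor\leq n/2$.

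For $X_n\in\h$ I would pass to the quotient. As $X_n$ spans the centre of $\m_2(n)$ and lies in $\h$, the ideal $\Span(X_n)$ satisfies the hypothesis of Lemma \ref{L:g/Xn}\eqref{I:g/Xnb}, so $\overline\h=\pi(\h)$ is totally geodesic in $\overline\g=\m_2(n)/\Span(X_n)\cong\m_2(n-1)$ for a suitable inner product, with $\dim(\overline\h)=\dim(\h)-1$. Moreover $\overline X_{n-1}\notin\overline\h$, since otherwise $X_{n-1},X_n\in\h$, contradicting Lemma \ref{L:facts}\eqref{I:eiei+1}. Thus $\overline\h$ is a totally geodesic subalgebra of $\m_2(n-1)$ with its top basis vector excluded, and the theorem reduces, by induction on $n$, to a \emph{sharp} bound for the case $X_m\notin\h$: namely $\dim(\h)\leq\lfloor m/2\rfloor$ for $m$ odd and $\dim(\h)\leq m/2-1$ for $m$ even. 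Substituting into $\dim(\h)=\dim(\overline\h)+1$ then closes the induction in both parities.

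The main obstacle is therefore this sharp ``$X_m\notin\h$'' bound, which has two remaining pieces. The first is the \emph{abelian case} $2\notin D$: then $\h\subseteq\g_3$ is abelian and $E_1,E_2\in\hp$ (the latter since $\g_3=\Span(E_3,\dots,E_n)\perp E_2$), so the only total-geodesy constraints come from $E_1$ and $E_2$, namely that $\langle\ad(E_1)\,\cdot\,,\cdot\,\rangle$ and $\langle\ad(E_2)\,\cdot\,,\cdot\,\rangle$ are skew-symmetric on $\h$; equivalently $\h$ is isotropic for the symmetric operators $\ad(E_i)+\ad(E_i)^{*}$. Here $\ad(E_1)|_{\g_3}=\gamma A+\delta A^2$ and $\ad(E_2)|_{\g_3}=\beta A^2$ for the degree-raising Jordan block $A$ with $\gamma\neq0$, so $\ad(E_1)|_{\g_3}$ is regular nilpotent; the plan is to bound the dimension of such an isotropic subalgebra and combine it with the degree restrictions. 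The second piece is ruling out, for $m$ even, the extremal configuration $D=\{2,3,5,\dots,m-1\}$ left open by the core case; for this I would use that Lemma \ref{L:facts}\eqref{I:ei+aei+1} forces $E_{m-1}\in\h$ while $E_m\in\hp$, and then extract a contradiction from the skew-symmetry of $\langle\ad(E_1)\,\cdot\,,\cdot\,\rangle$ against the constrained (even-degree) tails of the reduced basis of $\h$. Turning these isotropy-versus-degree computations into exactly the ``$-1$'' improvement for even $m$ is the crux of the argument.
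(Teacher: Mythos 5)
Your reduction of the bracket structure and the degree--marching count in the case where $\h$ contains an element of degree $2$ and $X_n\notin\h$ is correct, and it matches the counting the paper performs in its second case. But the two steps you defer are exactly where all of the work lies, and neither of your plans is yet an argument. For the case $2\notin D$ you propose to bound an isotropic subspace for the symmetric forms $\langle(\ad(E_i)+\ad(E_i)^{*})\,\cdot,\cdot\rangle$, $i=1,2$; these forms can be degenerate (their matrices are $M+M^{t}$ with $M$ strictly triangular, and the rank depends on the off-subdiagonal entries, hence on the inner product), so no useful dimension bound follows without substantial further input. The paper handles this case by a different and quite specific device: taking $Z=X_1+\cdots$ and $W=X_2+\cdots$ in $\hp$, it observes that $\ad(W)=\ad^{2}(Z)$ on the derived algebra, and the assumption $\dim(\h)>n/2$ then produces, by a rank count, a vector $Y\in\h$ with $0\ne[Z,Y]\in\h$, contradicting \eqref{L:basic}. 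Nothing in your sketch reproduces this idea. (A small imprecision along the way: $\ad(X_j)\ne0$ for $3\le j\le n-2$, since $[X_1,X_j]=X_{j+1}$; what your closure property actually uses is $\ad(X_j)|_{\g_3}=0$.)

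The more serious structural problem is in your quotient reduction for $X_n\in\h$ with $n$ odd. It requires the statement: every totally geodesic subalgebra of $\m_2(m)$, $m=n-1$ even, with $X_m\notin\h$, has dimension at most $m/2-1$. This is strictly stronger than the theorem itself for $\m_2(m)$ (whose bound $m/2$ is attained), it must hold for an arbitrary inner product on $\m_2(m)$ and not only for those arising as quotients, and it is needed in the abelian case too, where your count gives only $m/2$. You do not prove it, and the paper does not either: the paper instead keeps the hard configuration $\h=\Span(Y_2,Y_3,Y_5,\dots,Y_n)$ inside $\m_2(n)$, where it can exploit the full bracket structure, and rules it out by showing successively that $\hp$ contains no element of degree $2$ and none of odd degree $\ge 3$, so that $\hp=\Span(E_1,W_4,\dots,W_{n-1})$, and then by a triangularity argument forcing $E_2,E_3\in\h$, contradicting Lemma \ref{L:facts}\eqref{I:eiei+1}. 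Your ``extract a contradiction from the skew-symmetry against the constrained tails'' is a placeholder for precisely this analysis, which you correctly identify as the crux but do not supply; as it stands the proposal proves the theorem only in the case where $\h$ contains an element of degree $2$ and $X_n\notin\h$.
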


\begin{proof}  Suppose that $\m_2(n)$ has an inner product $\ip$ for which  $\h$ is a totally geodesic subalgebra and assume that  $\dim(\h)> \frac{n}2$. First assume   that there are no elements in $\h$ of degree two. So there exist elements of $\hp$ of the following form: $Z=X_1+ \sum_{i=3}^n a_iX_i$ and $W=X_2+ \sum_{i=3}^n b_iX_i$, for some $a_i,b_i\in\R$. Note that $\h$ is contained in the derived algebra $[\m_2(n),\m_2(n)]$ of $\m_2(n)$. The key observation is that when restricted to $[\m_2(n),\m_2(n)]$, one has $\ad(W)=\ad^2(Z)$.

Let $\k=\hp\cap[\m_2(n),\m_2(n)]$ and consider the map $f: \h\to \k$ defined by $f(Y)= \pip([Z,Y])$, where $\pip:\m_2(n)\to\k$ is the orthogonal projection. We have $\dim(\h)> \frac{n}2= \frac{\dim(\h)+\dim(\k)+2}2$, so $\dim(\h)>\dim(\k)+2$. Thus  $\dim(\ker(f))>2$. Since $\dim(\ker(\ad(Z)_{|\h}))=1$, there exists $Y\in\h$ with $Y\in  \ker(f)\backslash \ker(\ad(Z))$; that is, $[Z,Y]\in\h$ and $[Z,Y]\not=0$. Since $\h$ is totally geodesic, $Z\in\hp$ and $Y,[Z,Y]\in\h$, we have
\[
0=\langle Y,[Z,[Z,Y]] \rangle+\langle [Z,Y],[Z,Y] \rangle
\]
so $\langle Y,[Z,[Z,Y]]\rangle\not=0$. But $\langle Y,[Z,[Z,Y]]\rangle=\langle Y,[W,Y]\rangle=0$, since $\h$ is totally geodesic, $W\in\hp$ and $Y\in\h$. This is a contradiction.

It remains to consider the case where  there exists $Y_2\in\h$ with $\deg(Y_2)=2$. First suppose that $E_n\not\in\h$.
If $n$ is odd then, since $\h$ has an element of degree 2, $\h$ can have no elements of odd degree and so $\dim (\h)\leq \frac{n-1}2$. If $n$ is even then $\h$ can have no elements of even degree $\geq 4$, and consequently $\dim (\h)\leq \frac{n}2$.

Now  suppose that  $E_n\in\h$. Note that by Lemma \ref{L:facts}\eqref{I:eiei+1}, $\h$ has no elements of degree $n-1$.
Hence, if $n$ is even, $\h$ can have no elements of odd degree, and so $\dim (\h)\leq \frac{n}2$.
 If $n$ is odd, $\h$ can have no elements of even degree $\geq 4$, and consequently $\dim (\h)\leq \frac{n+1}2$.
Suppose therefore that $n$ is odd and that $\h$ has dimension $ \frac{n+1}2$; so there are elements $Y_i$ for $i=3,5,7,\dots, n$ such that $Y_i$ has degree $i$ and $\h=\Span(Y_2,Y_3,Y_5,\dots,Y_n)$. Without loss of generality, we may assume that for each $i=3,5,7,\dots, n$, the vector $Y_i$ has no component in the $E_j$ direction for all odd $j>i$.

Note that $\hp$ has no elements of degree 2. Indeed, if $W$ were such an element, then $[W,Y_{n-2}]$ would have degree $n$, but we would also have
\[
 0=\langle Y_{n-2},[W,Y_{n}] \rangle+ \langle [W,Y_{n-2}],Y_{n} \rangle=  \langle [W,Y_{n-2}],Y_{n} \rangle,
 \]
which is impossible.

We claim that $\hp$ has no elements of odd degree $\geq 3$. Indeed, suppose that $\hp$ has an element $W$ of odd degree $i\geq 3$. Note that $[W,Y_{2}]$ has degree $i+2$ and for each odd $j\geq 3$, we have $Y_j,W\in\Span(X_3,X_4,\dots,X_n)$ and so $[W,Y_{j}]=0$. So
 \[
 0=\langle Y_{2},[W,Y_{j}] \rangle+ \langle [W,Y_{2}],Y_{j} \rangle=  \langle [W,Y_{2}],Y_{j} \rangle.
 \]
Moreover, $\langle Y_{2},[W,Y_{2}] \rangle=0$. Hence $[W,Y_{2}]\in\hp$. By induction, we obtain an element of $\hp$ of degree $n$, contradicting the assumption that $E_n\in\h$.

From what we have just seen,  since $\dim(\hp)=\frac{n-1}2$, there are necessarily elements $W_i$ for $i=4,6,8,\dots, n-1$ such that $W_i$ has degree $i$ and $\hp=\Span(E_1,W_4,W_6,\dots,W_{n-1})$. We may assume that for each $i=4,6,8,\dots, n-1$, the vector $W_i$ has no component in the $E_j$ direction for all even $j>i$. Note that as $W_{n-1}$ has degree ${n-1}$, we have $W_{n-1}\in\Span(E_{n-1},E_n)$. So, as $W_{n-1}$ is perpendicular to $Y_n$, which is a multiple of $E_n$, we must have that $W_{n-1}$ is  a multiple of $E_{n-1}$. Continuing by induction, it is clear that  $W_i$  is a multiple of $E_i$ for all  $i=4,6,8,\dots, n-1$, and $Y_i$  is a multiple of $E_i$ for all   $i=2,3,5,7,\dots, n$. But then as $Y_2,Y_3\in\h$ we have $E_2,E_3\in\h$, which contradicts Lemma~\ref{L:facts}\eqref{I:eiei+1}. This completes the proof of the theorem.
\end{proof}


\begin{theorem}\label{T:dimmiracle}
If $\h$ is a proper totally geodesic subalgebra of $\V_n$, $n \ge 3$, then $\dim(\h)\leq \frac{n}2$.
 \end{theorem}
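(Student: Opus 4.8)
The plan is to show that $\V_n$ and the quotient $\V_n/\Span(X_n)$ both belong to the family $\mathcal O_2$, so that the dimension bound falls straight out of the preliminary Lemmas \ref{L:Xnnotinh} and \ref{L:Xninh}, according to whether $X_n\in\h$ or not. The bound is trivial when $\dim(\h)\le 1$, so I may assume $\dim(\h)\ge 2$ throughout. First I would dispose of the small cases using Remark \ref{R:DMilTheorem}: since $\V_3\cong\m_0(3)$, $\V_4\cong\m_0(4)$ and $\V_5\cong\m_2(5)$, the bound $\dim(\h)\le n/2$ for $3\le n\le 5$ follows from the codimension-two result for $\m_0(n)$ and from Theorem \ref{T:m2n}. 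Hence from now on I assume $n\ge 6$.

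The key structural observation is that $\V_m\in\mathcal O_2$ for every $m\ge 5$. Indeed, rescaling the basis by $\tilde X_i=c_iX_i$ with $c_1=c_2=1$ and $c_{i+1}=(i-1)c_i$ (so each $c_i\neq 0$) turns the relation $[X_1,X_i]=(i-1)X_{i+1}$ into $[\tilde X_1,\tilde X_i]=\tilde X_{i+1}$, which is the first part of \eqref{E:O1}; moreover $[\tilde X_2,\tilde X_i]$ and $[\tilde X_i,\tilde X_{m-i}]$ remain nonzero multiples of $\tilde X_{i+2}$ and $\tilde X_m$ respectively, because $[X_2,X_i]=(i-2)X_{i+2}\neq 0$ for $i\ge 3$ and $[X_i,X_{m-i}]=(m-2i)X_m\neq 0$ for $i<m/2$. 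This simultaneously supplies the second part of \eqref{E:O1} and condition \eqref{E:O2} in the single basis $\{\tilde X_i\}$. In particular $\V_n\in\mathcal O_2\subset\mathcal O_1$.

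Now if $X_n\notin\h$, then since $\V_n\in\mathcal O_1$ and $n\ge 5$, Lemma \ref{L:Xnnotinh}\eqref{I:2.1a} immediately gives $\dim(\h)\le\lfloor n/2\rfloor$. If instead $X_n\in\h$, I would pass to $\overline{\g}:=\g/\Span(X_n)$; killing $X_n$ switches off exactly the brackets landing in $V_n$, so $\overline{\g}\cong\V_{n-1}$, which lies in $\mathcal O_2\subset\mathcal O_1$ by the previous paragraph. When $n$ is even this places us in the hypotheses of Lemma \ref{L:Xninh}\eqref{I:Xninha}, yielding $\dim(\h)\le n/2$; when $n$ is odd, the membership $\V_{n-1}\in\mathcal O_2$ places us in the hypotheses of Lemma \ref{L:Xninh}\eqref{I:Xninhb}, yielding $\dim(\h)\le(n-1)/2$. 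In every case $\dim(\h)\le\lfloor n/2\rfloor\le n/2$, as claimed.

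The content of the argument is almost entirely front-loaded into the preliminary lemmas, so the only genuine work is the bookkeeping: verifying the membership $\V_m\in\mathcal O_2$ by an explicit rescaling, identifying $\g/\Span(X_n)$ with $\V_{n-1}$, and matching the parity of $n$ to the correct branch of Lemma \ref{L:Xninh}. I expect the one subtle point to be confirming that a \emph{single} rescaled basis realises all of \eqref{E:O1} and \eqref{E:O2} at once, rather than each condition in a separate basis; once that is checked, the theorem is a direct assembly of the lemmas.
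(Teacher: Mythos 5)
Your proposal is correct and follows essentially the same route as the paper: dispose of $n\le 5$ via the codimension bound and the isomorphism $\V_5\cong\m_2(5)$ together with Theorem~\ref{T:m2n}, then for $n\ge 6$ split on whether $X_n\in\h$ and invoke Lemma~\ref{L:Xnnotinh} or Lemma~\ref{L:Xninh} using $\V_n/\Span(X_n)\cong\V_{n-1}\in\mathcal O_2$. The only difference is that you spell out the rescaling $c_i=(i-2)!$ verifying $\V_m\in\mathcal O_2$, which the paper leaves implicit; that check is correct.
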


\begin{proof}
By  \cite[Proposition 1.13]{CHGN}, filiform Lie algebras have no totally geodesic subalgebras of codimension one. So the result is true for $n\leq 4$. For $n = 5$ the claim follows from Theorem~\ref{T:m2n} and the isomorphism from Remark~\ref{R:DMilTheorem}. Suppose $n \ge 6$. By Lemma~\ref{L:Xnnotinh} we may assume that $X_n\in\h$ and then the required result follows immediately from Lemma~\ref{L:Xninh}, as $\V_n/\Span(X_n)\cong \V_{n-1} \in \mathcal O_2$.
\end{proof}


\begin{theorem}\label{T:m0,1}
The minimal codimension of a proper totally geodesic subalgebra $\h$ of $\m_{0,1}(2k+1)$, $k \ge 3$, is four.
\end{theorem}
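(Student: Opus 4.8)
The plan is to split on whether $X_{2k+1}\in\h$, dispatch the case $X_{2k+1}\notin\h$ with an earlier lemma, and concentrate the genuine work on the case $X_{2k+1}\in\h$ together with the attainment.

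First I would record the structural features of $\g=\m_{0,1}(2k+1)$ that drive everything. Writing $n=2k+1$, the presentation shows immediately that $\g\in\mathcal O_1$: relation \eqref{E:O1} holds with $\alpha_i=(-1)^{i+1}\neq0$ for $i=2,\dots,k=\lfloor(n-1)/2\rfloor$. The crucial extra feature is that \emph{every} pair summing to $n$ brackets nontrivially into the centre, i.e.\ $[X_i,X_{n-i}]$ is a nonzero multiple of $X_n$ for all $1\le i\le n-1$: for $2\le i\le k$ this is the second family of relations, for $k<i\le n-2$ it follows by antisymmetry, and $[X_1,X_{n-1}]=X_n$. By Remark~\ref{R:codim} there are no totally geodesic subalgebras of codimension one or two, so it suffices to rule out codimension three and to exhibit one of codimension four. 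If $X_n\notin\h$, then since $\g\in\mathcal O_1$ and $n\ge7$, Lemma~\ref{L:Xnnotinh}\eqref{I:2.1a} gives $\dim\h\le\lfloor n/2\rfloor=k$, hence $\codim\h\ge k+1\ge4$, and there is nothing to prove.

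Assume now $X_n\in\h$, so $E_n\in\h$. The argument rests on two symmetric constraints on the sets of filtration degrees $D_\h$ and $D_{\hp}$ occurring in $\h$ and $\hp$ (recall each degree is the leading degree of at most one basis vector, so $\dim\h=|D_\h|$ and $\dim\hp=|D_{\hp}|$). The first constraint is Lemma~\ref{L:facts}\eqref{I:kn-k}: since all top brackets are nonzero, for every $s\in D_\h$ with $2\le s\le n-2$ one has $n-s\notin D_{\hp}$. The second is its dual: if $W\in\hp$ has degree $t\le n-2$ and $\h$ contains an element $Y$ of degree $n-t$, then applying \eqref{L:basic} to $W\in\hp$ and $Y,E_n\in\h$, and using that $X_n$ is central so that $[W,E_n]=0$, gives $0=\langle[W,Y],E_n\rangle$; but $[W,Y]$ is a nonzero multiple of $E_n$, a contradiction. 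Hence for every $t\in D_{\hp}$ with $t\le n-2$ the degree $n-t$ is absent from $D_\h$. Combining these with $E_1\in\hp$ (Lemma~\ref{L:facts}\eqref{I:e1}) and $n-1\notin D_\h$ (from Lemma~\ref{L:facts}\eqref{I:eiei+1},\eqref{I:ei+aei+1} together with $E_n\in\h$), a short count in the hypothetical codimension-three case ($|D_{\hp}|=3$) forces $D_{\hp}=\{1,p,n-1\}$ with $p\in\{2,3\}$ (the value $p$ being provided by Lemma~\ref{L:facts}\eqref{I:eiei+1}), $D_\h=\{2,\dots,n\}\setminus\{n-1,\,n-p\}$, and $E_{n-1}\in\hp$.

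The remaining task is to eliminate the two configurations $p=2$ and $p=3$, and this is the main obstacle. In each of them $\h$ and $\hp$ share the degree $p$ while the degree $n-p$ is missing from both. Writing the degree-$p$ vector of $\hp$ as $W_p=E_p+\sum_{l=p+1}^{n-2}\mu_lE_l$ (its $E_{n-1}$ and $E_n$ components vanish since $E_{n-1}\in\hp$ and $E_n\in\h$), I would test \eqref{L:basic} with $W_p$ and with $E_1$ against the degree-reduced basis of $\h$, aiming to propagate an element up the filtration exactly as in the inductive steps of Lemma~\ref{L:zn-2,zn-1} and in the proof of Theorem~\ref{T:m2n}, until one produces a vector of $\hp$ of degree $n$, contradicting $E_n\in\h$. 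The delicate point is the bookkeeping of the coefficients $\mu_l$: I expect the gap at degree $n-p$ in $D_\h$, together with the nonvanishing of $[X_p,X_{n-p}]$, to be precisely what obstructs the totally geodesic equations and produces the contradiction.

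Finally, for attainment I would construct an explicit codimension-four example. The subalgebra spanned by the even basis vectors has dimension $k$, which is codimension four only when $k=3$; for $k\ge4$ one has $\dim\h=2k-3>k$, and moreover the analysis above shows that with the standard inner product and $E_1\in\hp$ no two consecutive basis indices may lie in $\h$, so a nonstandard inner product is unavoidable. I would therefore begin from a codimension-two totally geodesic subalgebra of the quotient $\m_0(2k)=\g/\Span(X_n)$ furnished by \cite[Theorem~1.17]{CHGN}, lift it through $\pi$, cut it down by the two further dimensions dictated by the central brackets $[X_i,X_{n-i}]$, and choose the inner product so that \eqref{L:basic} holds; verifying the totally geodesic condition for this explicit $\h$ of dimension $2k-3$ is then a direct computation.
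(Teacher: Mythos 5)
Your reduction of the case $X_{2k+1}\notin\h$ to Lemma~\ref{L:Xnnotinh}\eqref{I:2.1a} matches the paper, but both places where real work is required are left open. First, in the case $X_{2k+1}\in\h$ your ``short count'' is internally inconsistent: writing $n=2k+1$, if $D_{\hp}=\{1,p,n-1\}$ with $p\in\{2,3\}$, then (since the degrees occurring in $\h$ and in $\hp$ partition $\{1,\dots,n\}$, by the usual dimension count on $\h\cap\g_i$ and $\hp\cap\g_i$) the complement is $D_\h=\{2,\dots,n\}\setminus\{p,n-1\}$, not $\{2,\dots,n\}\setminus\{n-1,n-p\}$. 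Had you carried the count through correctly you would have found that there is no ``remaining task'': $n-p\in D_\h$, so $\h$ contains $E_n$ together with an element of degree $n-p$, and since $[X_{n-p},X_p]$ is a nonzero multiple of $X_n$, Lemma~\ref{L:facts}\eqref{I:kn-k} forbids any element of degree $p$ in $\hp$, contradicting $p\in D_{\hp}$. This would in fact be a shorter route than the paper's, which instead observes that in the codimension-three case $\g'=\h\oplus\Span(X_i)$ is isomorphic to a Heisenberg algebra in which $\h$ would be totally geodesic of codimension one, contradicting \cite[Proposition~1.13]{CHGN}. As written, however, your elimination of $p=2,3$ stops at ``I expect \dots to produce the contradiction,'' which is not a proof.

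The more serious gap is the attainment claim, which is where the bulk of the paper's proof lives. Exhibiting a codimension-four totally geodesic subalgebra of $\m_{0,1}(2k+1)$ requires building the bracket from the operators $N$ and $K$ of \eqref{eq:m01KN}--\eqref{eq:NKmat}, proving the existence of a nonsingular symmetric $S$ and a vector $u$ with $S(-S+uu^t)$ nilpotent (via the Vandermonde computation and Lemma~\ref{L:comb}), and then choosing $p$ and $q$ (via Lemma~\ref{l:square} and the Cayley--Hamilton theorem) so that the resulting algebra is genuinely isomorphic to $\m_{0,1}(2k+1)$. Your plan --- lift a codimension-two totally geodesic subalgebra of $\m_0(2k)$ through $\pi$, ``cut it down by two dimensions,'' and ``choose the inner product so that \eqref{L:basic} holds'' --- reproduces the dimension count $2k-3$ but nothing more: there is no reason the resulting subspace is a subalgebra, no candidate inner product, and no verification of \eqref{L:basic}; nor does the paper's example have this form. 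Since the example must be compatible with all $k-1$ central brackets $[X_l,X_{2k+1-l}]$ simultaneously, its existence is delicate and cannot be dismissed as ``a direct computation.''
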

\begin{proof}
Let $\h$ be a proper totally geodesic subalgebra of $\m_{0,1}(2k+1)$, $k\geq 3$. If $X_{2k+1}\not\in\h$,
then Lemma \ref{L:Xnnotinh}\eqref{I:2.1a} gives $\dim (\h)\leq k$, since $\m_{0,1}(2k+1)\in\mathcal O_1$ for every $k\geq 3$.
So we may assume that $X_{2k+1}\in\h$.
Hence, by Lemma \ref{L:facts}\eqref{I:eiei+1}, there are no elements of degree $2k$ in $\h$.
By Lemma \ref{L:facts}\eqref{I:e1}, $\h\subset \Span(X_2,\ldots,X_{2k+1})$.
Moreover, by \cite[Proposition~1.13, Theorem~1.18]{CHGN}, the codimension of $\h$ is at least $3$. If the codimension of $\h$ is $3$, there exists $2 \le i \le 2k-1$ such that $\h=\Span(Y_2, \dots, Y_{i-1},Y_{i+1}, \dots, Y_{2k-1}, X_{2k+1})$, where $\deg (Y_j)=j$. We may then choose a basis for the subspace $\g'=\h \oplus \Span(X_i) \subset \g$ of the form $\{X_j + a_jX_{2k}, X_{2k+1}\}, \; j=2, \dots, 2k-1$, for some $a_j \in \R$, hence $\g'$ is a Lie algebra isomorphic to the Heisenberg algebra. It follows from \eqref{L:basic} that $\h$ is totally geodesic in $\g'$, with the induced inner product, but this is a contradiction with \cite[Proposition~1.13]{CHGN}.
%
Hence $\h$ has codimension at least 4 in $\m_{0,1}(2k+1)$.

In the following, we will exhibit an example of a totally geodesic subalgebra $\h$ of $\m_{0,1}(2k+1)$ of codimension exactly four.
Fix $k \ge 3$ and let $\m=\R^{2k+1}$, equipped with an inner product $\ip$ and an orthonormal basis $\{E_1,\dots,E_{2k+1}\}$. Introduce the subspace $\m'=\Span(E_2, \dots , E_{2k+1})$, and
define a bilinear skew-symmetric map $[\cdot,\cdot]: \m \times \m \to \m$ by
\begin{equation}\label{eq:m01KN}
[E_1, X] = NX, \quad [X,Y]=\<KX,Y\>E_{2k+1}, \quad \text{for all } X, Y \in \m',
\end{equation}
where the operators $N, K \in \End(\m')$ are defined by their matrices relative to the basis $\{E_2, \dots, E_{2k+1}\}$ for $\m'$ as follows:
\begin{equation}\label{eq:NKmat}
    N=\left(
        \begin{array}{cc|cc}
          0 & S & 0 & 0 \\
          -S +uu^t& 0 & 0 & 0 \\
          \hline
          p^t & 0 & 0 & 0 \\
          0 &  0  & 1 & 0 \\
        \end{array}
      \right), \qquad
    K=\left(
        \begin{array}{cc|cc}
          0 & I_{k-1} & 0 & 0  \\
          -I_{k-1} & 0 & 0 & 0 \\
          \hline
          0  & 0 & 0 & 0 \\
          0  & 0 & 0 & 0 \\
        \end{array}
      \right),
\end{equation}
where $I_{k-1}$ is the identity matrix, 
$u, p \in \R^{k-1}$, and $S$ is a symmetric nonsingular $(k-1)\times (k-1)$-matrix such that the matrix
\begin{equation}\label{eq:SSEnil}
T=S(-S + uu^t) \text{ is nilpotent}. 
\end{equation}
We postpone the question of the existence of such $S$ and $u$ and of a correct choice of $p$ to a little later.

The map $[\cdot,\cdot]$ given by \eqref{eq:m01KN} can be extended to $\m$ by skew symmetry and bilinearity (note that $K$ is skew-symmetric).
The claim of the theorem is established in the following four steps:
\begin{enumerate}[(i)]
  \item \label{it:i}
  The space $\m$ with the map $[\cdot,\cdot]$ defined by (\ref{eq:m01KN}, \ref{eq:NKmat}) is a Lie algebra. 
 \item \label{it:ii}
 The subspace $\h = \big(\Span(E_1, E_{2k}, (0, u, 0_{k+1})^t, (0_{k}, u, 0,0)^t)\big)^\perp$ is a totally geodesic subalgebra of $\m$, where $0_m$ denotes the row vector of $m$ zeros.
  \item \label{it:iii}
 There exists $u \in \R^{k-1}$ and a symmetric nonsingular matrix $S$ satisfying \eqref{eq:SSEnil}.
  \item \label{it:iv}
 There exists $p \in \R^{k-1}$ such that the Lie algebra $\m$ defined by (\ref{eq:m01KN}, \ref{eq:NKmat}, \ref{eq:SSEnil}), with $S$ and $u$ constructed as in \eqref{it:iii}, is isomorphic to $\m_{0,1}(2k+1)$. 
\end{enumerate}

\eqref{it:i}.  To see this, it suffices to check the Jacobi identities. As $[\m, E_{2k+1}]=0$ and $[\m',\m'] = \Span(E_{2k+1})$, they are satisfied for any triple of vectors from $\m'$. By \eqref{eq:m01KN}, the Jacobi identity on a triple  $(E_1,X,Y), \;  X,Y\in \m'$, is equivalent to $\<KNX,Y\>=\<KNY,X\>$, for all $X,Y \in \m'$, which is true, as $KN$ is symmetric (from \eqref{eq:NKmat}).

\eqref{it:ii}.   Note that by \eqref{eq:SSEnil}, $u \ne 0$ as $S$ is nonsingular, so $\mathrm{codim}(\h)=4$.
The subspace $\h$ is a subalgebra since $[\h,\h] \subset [\m',\m'] \subset \Span(E_{2k+1}) \subset \h$. To see that $\h$ is totally geodesic, we have to check that \eqref{L:basic} is satisfied. If $X \in \m'$, equation \eqref{L:basic} is equivalent to $\<KX,Y\>\<E_{2k+1},Z\>+\<KX,Z\>\<E_{2k+1},Y\>=0$. As $K$ is skew-symmetric, this is equivalent to $\<X,\<E_{2k+1},Z\>KY+\<E_{2k+1},Y\>KZ\>=0$, which is true since $KY,KZ \in \h$, as $\h \subset \m'$ is $K$-invariant (see \eqref{eq:m01KN}).

If $X=E_1$, then by \eqref{eq:m01KN}, condition \eqref{L:basic} is equivalent to $\<NY,Z\>+\<NZ,Y\>=0$, for all $Y, Z \in \h$, which follows from the form of $N$ given in \eqref{eq:m01KN} and the definition of $\h$.

\eqref{it:iii}.  Suppose that a nonsingular symmetric operator $S$ and a vector $u$ satisfy
  \begin{equation}\label{eq:Su}
  \begin{gathered}
    \rk(S^{-1}u, S^{-3}u, \dots, S^{3-2k}u)=k-1, \\
    \<S^{-1}u,u\>=1, \quad \<S^{-3}u,u\>= \dots =\<S^{3-2k}u,u\>=0.
  \end{gathered}
  \end{equation}
  Then by the second condition of \eqref{eq:Su}, $TS^{-1}u=S(-S+uu^t)S^{-1}u=0$, and then $T^2S^{-3}u=TS(-S+uu^t)S^{-3}u=-TS^{-1}u=0$, and, by induction, $T^{j}S^{1-2j}u=0$, for all $j=1, \dots, k-1$. So
  \begin{equation}\label{eq:TmSju}
    T^m S^{1-2j}u=0, \text{ for all } 1 \le j \le m \le k-1,
  \end{equation}
  and in particular, $T^{k-1}S^{-1}u=T^{k-1}S^{-3}u= \dots =T^{k-1}S^{3-2k}u=0$. As by the first condition of \eqref{eq:Su} the vectors $S^{-1}u, \dots, S^{3-2k}u$ form a basis for $\R^{k-1}$, we obtain $T^{k-1}=0$, as required.

  To construct $S$ and $u$ satisfying \eqref{eq:Su}, consider a diagonal matrix $S$ with distinct positive diagonal entries $d_i$, for $i=1, \dots, k-1$, and a vector $u=(u_1,\dots,u_{k-1})^t$ none of whose entries are zero. The first condition of \eqref{eq:Su} is equivalent to the condition that the $(k-1)$ $\times (k-1)$ matrix $M$ with entries $M_{ij}=d_i^{1-2j}u_i$ is nonsingular. As $\det (M)= \prod_i (d_i^{-1}u_i) \times V(d_1^{-2}, \dots, d_{k-1}^{-2})$, where $V$ is the Vandermonde determinant, the first condition is satisfied since $d_i^2 \ne d_j^2$ for $i \ne j$. The second condition of \eqref{eq:Su} is equivalent to the  following condition:
  \begin{equation}\label{eq:seccond}
  \sum_{i=1}^{k-1} d_i^{1-2l}u_i^2=\begin{cases}
1&:\ \text{if}\  l=1,\\
0&:\ \text{otherwise},
\end{cases}
 \end{equation}
for $l=1, \dots, k-1$.
To obtain this,  we choose the $u_i$'s and adjust the signs of the $d_i$'s in such a way that $d_i^{-1}u_i^2=\prod_{j \ne i}d_i^2(d_i^2-d_j^2)^{-1}$, for $i=1, \dots, k-1$. To see that this works, we employ the following combinatorial result:

\begin{lemma}\label{L:comb}
Let $b_1,\dots,b_m$ be distinct nonzero reals, where $m\geq 2$. Then for $ 0\leq l \leq m-1$,
\[
\sum_{i=1}^m b_i^l\prod_{j\not=i} \frac1{b_i-b_j}=
\begin{cases}
1&:\ \text{if}\  l=m-1,\\
0&:\ \text{otherwise}.
\end{cases}
\]
\end{lemma}

\begin{proof} Let $\phi_i(t)= \prod_{j \ne i}(t-b_j^{-1})(b_i^{-1}-b_j^{-1} )^{-1}$. So $\phi_i(b_j^{-1})=\delta_{ij}$ for all $1\leq i,j\leq m$.
Consider the polynomial $f_l(t)=\sum_{i=1}^m b_i^{l-m+1} \phi_i(t)$. Note that $f_l(t)$ has degree $m-1$ and $f_l(b_i^{-1})=b_i^{-(m-l-1)}$ for each $i$. So $f_l(t)=t^{m-l-1}$. So
\begin{align*}
 \sum_{i=1}^m b_i^l\prod_{j\not=i} \frac1{b_i-b_j}&=
 \sum_{i=1}^m b_i^{l-m+1} \prod_{j \ne i}\frac{b_i }{b_i -b_j }\\
 &=
 \sum_{i=1}^m b_i^{l-m+1} \prod_{j \ne i}\frac{-b_j^{-1} }{b_i^{-1} -b_j^{-1} }=
f_l(0)=\delta_{l,m-1}.\qedhere
 \end{align*}
\end{proof}
Setting $m=k-1$ and $b_i= d_i^2$, the lemma gives
\begin{align*}
 \sum_{i=1}^{k-1} d_i^{1-2l}u_i^2= \sum_{i=1}^{k-1} d_i^{2-2l}\prod_{j \ne i}d_i^2(d_i^2-d_j^2)^{-1}&= \sum_{i=1}^{k-1} d_i^{2k-2-2l}\prod_{j \ne i}(d_i^2-d_j^2)^{-1}\\
&=\sum_{i=1}^m b_i^{m-l}\prod_{j\not=i} \frac1{b_i-b_j}=\delta_{1l},
\end{align*}
which gives \eqref{eq:seccond} as required. This completes (iii).

\eqref{it:iv}.  We first consider the requirement that $\m$ be filiform, with $E_1$ having maximal nilpotency. Note that $\m'$ contains the derived algebra of $\m$ and is isomorphic to the direct product of the Heisenberg algebra $Heis(2k-1)$ and the one-dimensional abelian ideal spanned by $E_{2k}$ (and hence is nilpotent). It therefore suffices to show that the restriction of $\ad(E_1)$ to $\m'$ is nilpotent and has a maximal rank, that is, that $N$ is a nilpotent  matrix and that $N^{2k-1} \ne 0$. For $m \ge 1$, we have by induction:
  \begin{equation}\label{eq:N^2m+1}
    N^{2m+1}=\left(
        \begin{array}{cc|cc}
          0 & S(T^t)^m & 0 & 0 \\
          (-S +uu^t)T^m& 0 & 0 & 0 \\
          \hline
          p^t T^m & 0 & 0 & 0 \\
          0 & p^t S(T^t)^{m-1} & 0 & 0 \\
        \end{array}
      \right).
  \end{equation}
  The $(k-1) \times (k-1)$ matrix $T$ is nilpotent by \eqref{eq:SSEnil}, so $T^{k-1}=0$, so $N$ (and hence $\m$) is indeed nilpotent. Moreover, for $m=k-1$ we have
    \begin{equation}\label{eq:N^2k-1}
    N^{2k-1}=\left(
        \begin{array}{cc|cc}
          0 & 0 & 0 & 0 \\
          0& 0 & 0 & 0 \\
          \hline
          0 & 0 & 0 & 0 \\
          0 & (T^{k-2}Sp)^t & 0 & 0 \\
        \end{array}
      \right),
  \end{equation}
  so $\m$ is filiform if and only if
  \begin{equation}\label{eq:mnilp}
  T^{k-2}Sp \ne 0.
  \end{equation}
  Let us assume for the moment that \eqref{eq:mnilp} is satisfied.
   To show that $\m$ is isomorphic to $\m_{0,1}(2k+1)$, we have to find a vector $X_2 \in \m'$ such that the vectors $X_{i}=N^{i-2}X_2, i = 2, \dots, 2k+1$, are nonzero and satisfy $[X_i, X_j]=(-1)^{i+1} \delta_{i+j, 2k+1} X_{2k+1}$, for all $i,j=2, \dots, 2k+1$. We choose the vector $X_2$ whose coordinates relative to the basis $\{E_2, \dots, E_{2k+1}\}$ for $\m'$ are $(X_2)^t=( 0_{k-1},q^t,0,0)$, where  $q \in \R^{k-1}$ satisfies
  \begin{equation}\label{eq:pq}
  \<T^{k-2}Sp,q\> =-1. 
  \end{equation}
  Then $X_{2k+1}=N^{2k-1}X_2=-E_{2k+1}$ by \eqref{eq:N^2k-1}, so by \eqref{eq:m01KN}, the vector $q$ has to be chosen in such a way that $\<KN^{i-2}X_2,N^{j-2}X_2\>=(-1)^{i} \delta_{i+j, 2k+1}$, for all $i,j=2, \dots, 2k+1$. Note that the matrix $KN$ is symmetric, so $N^tK=-KN$. It follows that if $i+j$ is even (say $i+j=2s$), then $\<KN^{i-2}X_2,N^{j-2}X_2\>=\pm \<KN^{s-2}X_2,N^{s-2}X_2\>=0$, as $K$ is skew-symmetric. If $i+j=2s+1$ is odd, we have $\<KN^{i-2}X_2,N^{j-2}X_2\>=(-1)^{j-2}\<KN^{2s-3}X_2,X_2\>=(-1)^{i+1}\<KN^{2s-3}X_2,X_2\>$. So we require $\<KN^{2s-3}X_2,X_2\>= -\delta_{sk}$, for all $s=2, \dots, k$. By \eqref{eq:N^2m+1} and the choice of $X_2$, the latter equation is equivalent to $\<S(T^t)^{m}q,q\>= \delta_{m,k-2}$, for all $m=0, \dots, k-2$ (note that $T^{k-1}=0$ anyway). As $ST^t=TS$, this is equivalent to
  \begin{equation}\label{eq:TmS}
    \<T^{m}Sq,q\>= \delta_{m,k-2}, \text{ for all } m=0, \dots, k-2.
  \end{equation}
  Define $q=S^{2-2k}P(S^2)u$, where $P(t)=a_0+a_1t+ \dots + a_{k-2}t^{k-2}$ is a polynomial which we will specify a little later. By \eqref{eq:TmSju}, $T^mS^{1-2i}u = 0$, for $1 \le i \le m \le k-1$. Moreover, since $T=S(-S+uu^t)$ one obtains by induction using \eqref{eq:Su} that $T^mS^{1-2i}u = (-1)^mS^{2m+1-2i}u$, for $0 \le m < i  \le k-1$. It follows that $\<T^m S^{1-2i}u, S^{-2j}u\>=0$, when $1 \le i \le m \le k-1$, and $\<T^m S^{1-2i}u, S^{-2j}u\>=(-1)^m\< S^{2m+1-2i-2j}u, u\>$, when $0 \le m < i  \le k-1$, so, again by \eqref{eq:Su}, $\<T^m S^{1-2i}u, S^{-2j}u\>=0$, when $i+j < k+m$, and $\<T^m S^{1-2i}u, S^{-2j}u\>=(-1)^m\< S^{2m+1-2i-2j}u, u\>$, if $k+m \le i+j \le 2(k-1)$. It follows that
  \begin{align*}
    \<T^{m}Sq,q\>&=\sum\nolimits_{s,l=0}^{k-2}a_sa_l\<T^{m}S^{3-2k+2s}u,S^{2-2k+2l}u\>\\
    &=(-1)^{m}\sum\nolimits_{s,l;0 \le s+l \le k-m-2}a_sa_l\<S^{2m+5-4k+2s+2l}u,u\>\\
    &=(-1)^{m}\sum\nolimits_{M=0}^{k-m-2}\<S^{2m+5-4k+2M}u,u\> \big(\sum\nolimits_{s,l \ge 0; s+l=M} a_sa_l\big) \\
    &=(-1)^{m}\sum\nolimits_{M=0}^{k-2}\<S^{2m+5-4k+2M}u,u\> [P^2]_M, \\
  \end{align*}
  where $[P^2]_M$ is the coefficient of $t^M$ in the polynomial $P^2(t)$, and where we changed the upper limit in the second summation from $k-2$ to $k-m-2$, since for $m \le k-2$ and $k-m-1 \le M \le k-2$, we have $3-2k \le 2m+5-4k+2M \le 2m+1-2k \le -3$, so $\<S^{2m+5-4k+2M}u,u\>=0$ by \eqref{eq:Su}. It follows that
  \begin{equation}\label{eq:[p^2]}
    \<T^{m}Sq,q\>=(-1)^{m}\<S^{2m+5-4k}\sum\nolimits_{M=0}^{k-2}[P^2]_MS^{2M}u,u\>,
  \end{equation}
  To choose the polynomial $P$, we need the following lemma.
  {
  \begin{lemma}\label{l:square}
  Let $\chi(t)$ be a polynomial of degree $r$ with a positive constant term. Then there exists a polynomial $P(t)$ of degree $r$, such that $t^{r+1} \mid (P^2(t) - \chi(t))$.
  \end{lemma}
  \begin{proof} Let $\chi(t)=a^2 + b_1t+ \dots + b_rt^r, \; a \ne 0$. Formally, $P(t)$ is just the truncation of the formal power series $a(1+(a^{-2}b_1t+ \dots + b_rt^r))^{1/2}$ up to the term $t^r$. Informally, for $P(t)=c_0 + c_1t+ \dots + c_rt^r$, we have $c_0=a$, and then $2c_0c_1=b_1$, which can be solved for $c_1$; then $2c_0c_2+c_1^2=b_2$, which can be solved for $c_2$, and so on.
  \end{proof}
  }
  Now  in Lemma~\ref{l:square}, take $r=k-2$ and $\chi(t)=\det(S^2-tI_{k-1})+(-1)^{k}t^{k-1}$ (note that $\deg (\chi)=k-2$), and choose the corresponding $P(t)$. Then $\sum_{M=0}^{k-2}[P^2]_MS^{2M}=\sum_{M=0}^{k-2}[\chi]_MS^{2M}=\chi(S^2)=(-1)^{k}S^{2k-2}$, by the Cayley--Hamilton theorem. Then from \eqref{eq:[p^2]} and from \eqref{eq:Su} we obtain that for all $m=0, \dots, k-2$,
  \begin{equation*}
    \<T^{m}Sq,q\>=(-1)^{m+k}\<S^{2m+3-2k}u,u\>=(-1)^{m+k}\K_{m,k-2}=\K_{m,k-2},
  \end{equation*}
  as required by \eqref{eq:TmS}. As by \eqref{eq:TmS}, the vector $(T^{m}S)^tq$ is clearly nonzero, we can find $p \in \R^{k-1}$ satisfying \eqref{eq:pq}, and then \eqref{eq:mnilp} will be satisfied automatically.

  This proves the last step, and hence, completes the proof of the theorem.
\end{proof}


\begin{theorem}\label{T:m0,2}
The maximal dimension of a proper totally geodesic subalgebra $\h$ of\break ${\m_{0,2}(2k+2)}$, $k\geq 3$, is $k+1$.
\end{theorem}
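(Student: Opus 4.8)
The target bound $k+1$ equals $\lfloor n/2\rfloor$ for $n=2k+2$, so this is an instance of case~(c) of the main theorem, and the plan is to derive it from the structural lemmas of Section~\ref{S:prel} rather than by a fresh computation. The first step is to record that $\m_{0,2}(2k+2)\in\mathcal O_1$: in the basis of Table~\ref{Table:1} the relations $[X_j,X_{2k-j+2}]=(-1)^{j+1}(k-j+1)X_{2k+2}$ for $j=2,\dots,k$ read $[X_j,X_{n-j}]=\alpha_jX_n$ with $\alpha_j=(-1)^{j+1}(k-j+1)\ne0$ for every $j=2,\dots,k=\lfloor(n-1)/2\rfloor$, and together with $[X_1,X_i]=X_{i+1}$ these are exactly conditions \eqref{E:O1}. (One should note that $\m_{0,2}(2k+2)$ is \emph{not} in $\mathcal O_2$, since otherwise Lemma~\ref{L:Xnnotinh}\eqref{I:2.1b} would force the smaller bound $k$, which contradicts the attainment below; but we never actually need this fact.)

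For the upper bound I would split on whether $X_n=X_{2k+2}$ lies in $\h$, assuming as we may that $\dim(\h)\ge2$ so that the lemmas of Section~\ref{S:prel} apply. If $X_n\notin\h$, then since $n=2k+2\ge5$ and $\g\in\mathcal O_1$, Lemma~\ref{L:Xnnotinh}\eqref{I:2.1a} gives $\dim(\h)\le\lfloor n/2\rfloor=k+1$ at once. If $X_n\in\h$, the key observation is that quotienting by the central top ideal $\Span(X_{2k+2})$ deletes precisely the third block of relations and truncates $[X_1,X_{2k+1}]$ to zero, leaving the presentation $[X_1,X_i]=X_{i+1}$ ($i=2,\dots,2k$) and $[X_l,X_{2k-l+1}]=(-1)^{l+1}X_{2k+1}$ ($l=2,\dots,k$); that is, $\m_{0,2}(2k+2)/\Span(X_{2k+2})\cong\m_{0,1}(2k+1)$, which belongs to $\mathcal O_1$. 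Since $n$ is even and $n\ge6$, Lemma~\ref{L:Xninh}\eqref{I:Xninha} then yields $\dim(\h)\le n/2=k+1$. Combining the two cases gives $\dim(\h)\le k+1$ for every proper totally geodesic $\h$.

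For attainment I would invoke the standard construction recalled in the remark of Section~\ref{S:prel} (cf.~\cite{KP}): with the basis of Table~\ref{Table:1} taken orthonormal, the span of the even-indexed vectors $X_2,X_4,\dots,X_{2k+2}$ is a subalgebra (a bracket of two even-degree elements lands in an even-degree one-dimensional space) and is totally geodesic of dimension $\lfloor n/2\rfloor=k+1$. Together with the upper bound this shows the maximal dimension is exactly $k+1$.

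The whole argument is essentially a bookkeeping reduction to Lemmas~\ref{L:Xnnotinh} and~\ref{L:Xninh}, so the only genuine content — and the step I would check most carefully — is the quotient identification $\m_{0,2}(2k+2)/\Span(X_{2k+2})\cong\m_{0,1}(2k+1)$, since applying Lemma~\ref{L:Xninh}\eqref{I:Xninha} in the case $X_n\in\h$ hinges entirely on the quotient landing back in $\mathcal O_1$ through $\m_{0,1}$. Everything else reduces to verifying that the relevant $\alpha_j$ are nonzero and that the dimension and evenness hypotheses of the lemmas hold for $n=2k+2\ge8$.
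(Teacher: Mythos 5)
Your argument is correct and follows essentially the same route as the paper: the case $X_{2k+2}\notin\h$ via Lemma~\ref{L:Xnnotinh}\eqref{I:2.1a} using $\m_{0,2}(2k+2)\in\mathcal O_1$, and the case $X_{2k+2}\in\h$ via Lemma~\ref{L:Xninh}\eqref{I:Xninha} together with the identification $\m_{0,2}(2k+2)/\Span(X_{2k+2})\cong\m_{0,1}(2k+1)\in\mathcal O_1$. Your explicit verification of the quotient isomorphism and of attainment (via the even-index subalgebra of the remark in Section~\ref{S:prel}) only makes explicit what the paper leaves implicit.
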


\begin{proof}
As $\m_{0,2}(2k+2) \in \mathcal O_1$, we have $\dim(\h) \leq k+1$ by Lemma \ref{L:Xnnotinh}\eqref{I:2.1a}, if $X_{2k+2}\not\in\h$. So we may suppose that  $X_{2k+2}\in\h$, in which case the claim follows from Lemma~\ref{L:Xninh}\eqref{I:Xninha}, as $\m_{0,2}(2k+2)/\Span(X_n)\cong \m_{0,1}(2k+1)\in \mathcal O_1$.
\end{proof}


\begin{theorem}\label{T:m0,3}
The maximal dimension of a proper totally geodesic subalgebra $\h$ of\break ${\m_{0,3}(2k+3)}$, $k\geq 3$, is $k+1$.
\end{theorem}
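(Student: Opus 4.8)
The bound $k+1$ is attained: by the Remark after Lemma~\ref{L:facts} the subalgebra generated by the even-indexed basis vectors is totally geodesic of dimension $\lfloor(2k+3)/2\rfloor=k+1$, and it avoids $X_n=X_{2k+3}$. So the plan is to prove $\dim(\h)\le k+1$ for every totally geodesic $\h$. The essential complication, absent for $\m_{0,2}(2k+2)$, is that $\m_{0,3}(2k+3)\notin\mathcal O_1$: writing $[X_i,X_{n-i}]=\alpha_iX_n$, the one-line Jacobi computation $[X_2,X_{2k+1}]=-[X_3,X_{2k}]-(k-1)[X_1,X_{2k+2}]=(k-1)X_n-(k-1)X_n=0$ shows $\alpha_2=0$, whereas $\alpha_i=(-1)^i(i-2)\tfrac{2k-i+1}{2}\ne0$ for $i=3,\dots,k+1$. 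Because of this single missing coefficient, neither Lemma~\ref{L:Xnnotinh} nor Lemma~\ref{L:Xninh} applies directly and every crude count yields only $\dim(\h)\le k+2$; the whole difficulty is to delete this extra dimension. I would split on whether $X_n\in\h$.

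Suppose first $X_n\notin\h$. By Lemma~\ref{L:facts}\eqref{I:e1} the degree sets of $\h$ and $\hp$ partition $\{1,\dots,n\}$, with $1$ and $n$ in $\hp$. For $i=3,\dots,k+1$ the inequality $\alpha_i\ne0$ forbids $\h$ from containing elements of degree $i$ and of degree $n-i$ at once, since their bracket would be a nonzero multiple of $X_n$; hence $\h$ contains at most one degree from each of the $k-1$ pairs covering $\{3,\dots,2k\}$. The remaining available degrees are $2$, $2k+1$ and $2k+2$, giving the crude bound $k+2$. To remove the surplus I invoke Lemma~\ref{L:zn-2,zn-1}: if $\h$ contained elements of both degree $2k+1$ and degree $2k+2=n-1$, then with $p=n-2$ the lemma would forbid a degree-$n$ element of $\hp$, contradicting $n\in\hp$. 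Thus $\h$ omits one of the degrees $2k+1,2k+2$, and in either case the count drops to $(k-1)+1+1=k+1$. This settles the case cleanly.

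Now suppose $X_n\in\h$. Since $\Span(X_n)$ is a central ideal contained in $\h$, Lemma~\ref{L:g/Xn}\eqref{I:g/Xnb} produces an inner product on $\overline\g:=\g/\Span(X_n)\cong\m_{0,2}(2k+2)$ for which $\overline\h:=\pi(\h)$ is totally geodesic, with $\dim(\h)=\dim(\overline\h)+1$. As $X_{2k+2},X_{2k+3}\in\h$ is impossible by Lemma~\ref{L:facts}\eqref{I:eiei+1}, the top vector $\overline X_{2k+2}$ of $\m_{0,2}(2k+2)$ is missing from $\overline\h$; since $\m_{0,2}(2k+2)\in\mathcal O_1$, Lemma~\ref{L:Xnnotinh}\eqref{I:2.1a} (i.e. Theorem~\ref{T:m0,2}) gives $\dim(\overline\h)\le k+1$, hence $\dim(\h)\le k+2$. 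It remains to exclude equality. If $\dim(\h)=k+2$, then $\overline\h$ is a maximal totally geodesic subalgebra of $\m_{0,2}(2k+2)$ avoiding its top vector; the equality case of Lemma~\ref{L:Xnnotinh}\eqref{I:2.1a}, sharpened by Lemma~\ref{L:zn-2,zn-1} inside $\overline\g$ (which rules out the consecutive degrees $2k,2k+1$ in $\overline\h$), forces $\overline\h$ to contain elements of degree $2$ and of degree $2k+1$. Lifting to $\h$, the degree-$(2k+1)$ element may, after subtracting the available $X_n$ and applying Lemma~\ref{L:facts}\eqref{I:ei+aei+1} at $i=2k+1$, be taken to equal $E_{2k+1}$; then \eqref{L:basic} with $E_1\in\hp$ shows $[E_1,E_{2k+1}]$ is a nonzero multiple of $E_{2k+2}$, with $E_{2k+2}\in\hp$ (as $\h$ carries no degree $2k+2$). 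Running the degree-$2$ element and the forced partner degrees back through \eqref{L:basic} and the top bracket relations $[X_m,X_{2k-m+3}]$ of $\m_{0,3}(2k+3)$ should then produce either an illegal pair of consecutive degrees in $\h$ or a degree-$n$ element of $\hp$, the required contradiction.

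The main obstacle is this last borderline elimination. Precisely because $\alpha_2=0$, the otherwise decisive Lemma~\ref{L:facts}\eqref{I:kn-k} is inert on the pair $\{2,2k+1\}$, and the quotient $\m_{0,2}(2k+2)$ fails to lie in $\mathcal O_2$ (one checks $[X_2,X_i]=0$ for the small indices $i$), so neither the pairing count nor the quotient bound alone reaches $k+1$. Moreover a purely combinatorial attack fails: the degree constraints coming from both the $\m_{0,2}$-pairing (sum $2k+2$) and the $\m_{0,3}$-pairing (sum $2k+3$) admit a consistent alternating assignment, so the contradiction must be extracted from the actual inner-product identities \eqref{L:basic} together with the specific coefficients of the top relations of $\m_{0,3}(2k+3)$. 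The case $X_n\notin\h$, by contrast, is dispatched cleanly because there Lemma~\ref{L:zn-2,zn-1} already supplies the one extra constraint needed.
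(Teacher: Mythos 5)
Your reduction to the single borderline case $\dim(\h)=k+2$ is correct and matches the paper's opening moves: attainment of $k+1$ by the even-degree subalgebra, the pairing count when $X_{2k+3}\notin\h$, the quotient to $\m_{0,2}(2k+2)$ when $X_{2k+3}\in\h$, and the accurate diagnosis that $\alpha_2=0$ is what keeps $\m_{0,3}(2k+3)$ out of $\mathcal O_1$. But the elimination of $k+2$ --- which is the entire content of the theorem beyond routine counting --- is not achieved in either branch. In the branch $X_n\notin\h$ your contradiction rests on the premise that $\hp$ contains an element of degree $n$, which you attribute to Lemma~\ref{L:facts}(a); that lemma gives no such thing, and $X_n\notin\h$ does not imply $X_n\in\hp$. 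Indeed, in exactly the configuration produced inside the proof of Lemma~\ref{L:zn-2,zn-1}, $\h$ contains $E_{n-1}$ and $E_{n-2}+aE_n$ with $a\ne0$, so $\hp\cap\g_{n-2}$ is spanned by $aE_{n-2}-E_n$, of degree $n-2$, and no degree-$n$ vector lies in either $\h$ or $\hp$. Since $[X_2,X_{2k+1}]=0$ and $X_{2k+1},X_{2k+2},X_{2k+3}$ are central in $\Span(X_2,\dots,X_{2k+3})$, neither the bracket relations nor the instances of \eqref{L:basic} you invoke kill this configuration, so this case is not ``dispatched cleanly.'' In the branch $X_n\in\h$ you say the identities ``should then produce'' a contradiction; that is a statement of intent, not a proof, and you yourself concede in the final paragraph that the combinatorics admit a consistent assignment.

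For comparison, the paper handles both branches uniformly by passing to the two-step nilpotent subalgebra $\g'=\Span(X_2,\dots,X_{2k+3})$ with centre $\n=\Span(X_{2k+1},X_{2k+2},X_{2k+3})$, encoding the brackets by the pencil of skew-symmetric operators $J_N$, $N\in\n$, and proving Lemma~\ref{l:Kabc}: every nonzero $J_N$ has rank $2k-2$, its maximal isotropic subspaces have dimension $k$ and contain $\ker(J_N)$, kernels of non-proportional $N$ meet trivially, and the kernels over a two-dimensional pencil span a $k$-dimensional space (via the explicit polynomial description \eqref{eq:kerK}). The hypothesis $\dim(\h)=k+2$ forces $\dim(\h\cap\n)=2$ and $\dim(\pi_\b(\h))=k$, and the contradiction is then extracted from the interaction of $V=\pi_\b(\h)$ and $W=\pi_\b(\h^\perp)$ with these kernels, in the two subcases $\dim(W)=k$ and $\dim(W)=k-1$. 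None of this machinery, or any substitute for it, appears in your proposal, so the proof is missing its core step.
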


\begin{proof}
Let $\g=\m_{0,3}(2k+3), \; k \ge 3$, with the inner product $\ip$ and let $\h \subset \g$ be a proper totally geodesic subalgebra.
First note that if  $X_{2k+3}\in\h$, then we have  the  quotient map
\begin{equation*}
\pi:\g\to \overline{\g}:=\m_{0,3}(2k+3)/\Span(X_{2k+3})\cong \m_{0,2}(2k+2).
\end{equation*}
and by Lemma \ref{L:g/Xn}\eqref{I:g/Xnb}, there is an inner product on $\overline{\g}$ for which $\overline\h:=\pi(\h)$  is a totally geodesic subalgebra.
Then by Theorem \ref{T:m0,2}, we have $\dim (\overline\h) \leq k+1$ and so $\dim(\h)\leq k+2$.
On the other hand, if  $X_{2k+3}\not\in\h$, then for all $m = 3,\ldots,k+1$, since $[X_m, X_{2k-m+3}] $ is a nonzero multiple of $X_{2k+3}$, the subalgebra $\h$ cannot have an element of degree $m$ and an element of degree ${2k-m+3}$. Thus, using Lemma \ref{L:facts}\eqref{I:e1}, we again have $\dim(\h) \leq k+2$. So in either case, we have $\dim(\h) \leq k+2$, while we require $\dim(\h) \leq k+1$.

Consider the subalgebra $\g'=\Span(X_2,\dots,X_{2k+3})$.
Note that  $\g'$ is two-step nilpotent and that $\n:=[\g',\g']=\Span(X_{2k+1},X_{2k+2},X_{2k+3})$ is its center.
Let $\b$ denote the orthogonal complement of $\n$ in $\g'$; so $\b$ is a $(2k-1)$-dimensional vector subspace of $\g'$, but not a subalgebra.
For $N \in \n$, define a skew-symmetric operator $J_N \in \mathfrak{so}(\b)$ by $\<J_NX,Y\>=\<N,[X,Y]\>$, for $X,Y \in \b$. We have the following lemma, which roughly says that although the two-step algebra $\g'$ is not non-degenerate, its degeneracy can be tightly controlled.

\begin{lemma}\label{l:Kabc}
{\ }

\begin{enumerate}[\rm (a)]
  \item \label{it:Ka}
  If $N \in \n$ is nonzero, then $\rk (J_N) = 2k-2$.
  \item \label{it:Kb}
  If $N \in\n$ is nonzero and $V\subset\b$ is a subspace of maximal dimension such that $\<J_NV,V\>=0$, then $\dim (V)=k$ and $\ker (J_N) \subset V$.
  \item \label{it:Kc}
  If $N_1, N_2 \in \n$ are not proportional, then $\ker (J_{N_1}) \cap \ker (J_{N_2})=0$.
  \item \label{it:Kd}
  If $N_1, N_2 \in \n$ are not proportional, then the space $L=\Span(\ker (J_N) \, | \, N \in \Span(N_1,N_2),\break N \ne 0)$ has dimension $k$.
\end{enumerate}
\end{lemma}
\begin{proof}
The subspace $\b$ has a basis (possibly non-orthonormal) of the form $Y_i=X_i+Z_i, \; i=2, \dots, 2k$, where $Z_i \in \n$. Note that $[Y_i,Y_j] =[X_i,X_j]$, for all $i,j=2, \dots, 2k$. The matrix of the operator $J_N$ relative to the orthonormal basis $\{E_i\ :\  i=2, \dots, 2k\}$, for $\b$ is given by $J_N=\sum_{\al=1}^3 \<X_{2k+\al},N\> B K_\al B^t$, where $B$ is the transformation matrix between the bases $\{Y_i\}$ and $\{E_i\}$, and the skew-symmetric matrices $K_\al, \; \al=1,2,3$, are given by the defining relations of $\m_{0,3}(2k+3)$, relative to the basis $\{X_i\}$ (that is, $[X_i,X_j]=\sum_{\al=1}^3 (K_\al)_{i-1,j-1} X_{2k+\al}$, for $i,j=2, \dots, 2k$). Explicitly, they are given by
\begin{equation}\label{eq:K123}
\begin{gathered}
    (K_1)_{lm}=(-1)^l \delta_{l+m,2k-1}, \quad (K_2)_{lm}=(-1)^l \delta_{l+m,2k}(k-l), \\ (K_3)_{lm}=(-1)^{l+1} \delta_{l+m,2k+1} \frac12 (l-1)(m-1),
\end{gathered}
\end{equation}
for $l,m=1, \dots, 2k-1$. We have $J_N=B(aK_1+bK_2+cK_3)B^t$, where $a=\<X_{2k+1},N\>, \, b=\<X_{2k+2},N\>, \, c=\<X_{2k+3},N\>$. As $B$ is  nonsingular, it suffices to prove all four assertions of the lemma, with the $J_N$'s replaced by the matrices $aK_1+bK_2+cK_3$. From \eqref{eq:K123},
\begin{multline}\label{eq:Kabc}
    aK_1+bK_2+cK_3=\\
    \left(
        \begin{array}{ccccccc}
          0 & 0 & 0 & \hdots & 0 &-a & -(k-1)b \\
          0 & 0 & 0 & \hdots & a & (k-2)b & -(k-1)c \\
          0 & 0 & 0 & \iddots & -(k-3)b & (2k-3)c & 0 \\
          \vdots & \vdots &\iddots &\iddots &\iddots &\vdots & \vdots\\
          0 & -a& (k-3)b & \iddots & 0 & 0 & 0 \\
          a & -(k-2)b & -(2k-3)c & \hdots & 0 & 0 & 0 \\
          (k-1)b & (k-1)c & 0 & \hdots & 0 & 0 & 0 \\
        \end{array}
      \right)
\end{multline}

\begin{enumerate}[\rm (a)]
    \item
    As $aK_1+bK_2+cK_3$ is skew-symmetric and as $\dim (\b)=2k-1$ is odd, $\dim(\ker(aK_1+bK_2+cK_3))$ is odd. Assume that it is of dimension at least three. Suppose that $c \ne 0$. Then there exists a nonzero vector $x=(x_1, x_2, \dots, x_{2k-1})^t \in \ker(aK_1+bK_2+cK_3)$ whose first two coordinates are zero: $x_1=x_2=0$. Multiplying the matrix given by \eqref{eq:Kabc} by such an $x$ we obtain $x_3=0$ from the second last row, then $x_4=0$ from the third last row, and so on (note that all the entries on the sub-antidiagonal are nonzero by \eqref{eq:K123}). Then $x=0$, a contradiction. A similar argument (starting with the last two coordinates being zero) also works when $a \ne 0$. If $a=c=0, \; b \ne 0$, then from \eqref{eq:Kabc}, $\ker (K_2)$ is spanned by the $k$-th coordinate vector.

    \item
    As the subspaces $V$ and $J_NV$ are orthogonal and as $\dim(\ker (J_N)) =1$ by assertion~\eqref{it:Ka}, we have $2k-1 = \dim (\b) \ge \dim (V) + \dim (J_NV) \ge 2 \dim (V)-1$. It follows that $\dim (V) \le k$, with the equality only possible when $\dim (V)=k$ and $\dim (J_NV)=k-1$, which implies $\ker (J_N) \subset V$.

    \item
    By assertion~\eqref{it:Ka}, the kernel of any nonzero $K=aK_1+bK_2+cK_3$ has dimension one. We can find that kernel explicitly, namely
    \begin{equation}\label{eq:kerK}
        \begin{gathered}
        \ker (aK_1+bK_2+cK_3)= \Span(x), \text{ where } x=(x_1, \dots, x_{2k-1})^t,  \\ 
        \sum\nolimits_{j = 0}^{2k-2} (j!)^{-1}x_{2k-1-j}t^j =(a-bt+\tfrac12 c t^2)^{k-1}.
        \end{gathered}
    \end{equation}
    To prove this, assume that $x=(x_1, \dots, x_{2k-1})^t$ is a nonzero vector from $\ker (aK_1+bK_2+cK_3)$ and define $y_j=x_{2k-1-j}$ for $j=0, \dots, 2k-2$. Then from \eqref{eq:K123} we obtain $ay_{j+1}+b(k-j-1)y_j-\frac12 c j(2k-j-1)y_{j-1}=0$, for all $j=0, \dots, 2k-2$ (with the understanding that $y_{-1}=y_{2k-1}=0$). Multiplying by $t^j/j!$ and summing up by $j=0, \dots , 2k-2$ we obtain $ap'(t)+b(-tp'(t)+(k-1)p(t))+c(\frac12 t^2 p'(t)-(k-1)tp)=0$, where $p(t)=\sum_{j = 0}^{2k-2}y_j t^j/j!$. It follows that the polynomial $p(t)$ satisfies the equation $(a-tb+\frac12 ct^2)p'(t)=(k-1)(a-tb+\frac12 ct^2)'p(t)$, a solution to which is $(a-bt+\tfrac12 c t^2)^{k-1}$.
  It follows from \eqref{eq:kerK} that two nonzero matrices $a_1K_1+b_1K_2+c_1K_3$ and $a_2K_1+b_2K_2+c_2K_3$ have the same kernel only when the polynomials $(a_i-b_it+\tfrac12 c_i t^2)^{k-1}, \; i=1,2$, are proportional, which implies $(a_1,b_1,c_1) \parallel (a_2,b_2,c_2)$.

    \item
   From \eqref{eq:kerK}, the dimension of the span of the kernels of all the nontrivial linear combinations of nonproportional matrices $a_1K_1+b_1K_2+c_1K_3$ and $a_2K_1+b_2K_2+c_2K_3$ is the same as that of the subspace $S=\Span_{\lambda \in \R}(((a_1+\lambda a_2)-(b_1+\lambda b_2)t+\tfrac12 (c_1+\lambda c_2) t^2)^{k-1})$ in the space of polynomials in $t$ of degree less than or equal to $2k-2$. The subspace $S$ is spanned by the polynomials $(a_1-b_1t+\tfrac12 c_1 t^2)^{k-1-j}(a_2-b_2t+\tfrac12 c_2 t^2)^{j}, \; j=0, \dots , k-1$. If they were linearly dependent, then there would exist a nontrivial relation of the form $\sum_{j=0}^{k-1} \mu_j (a_1-b_1t+\tfrac12 c_1 t^2)^{k-1-j} (a_2-b_2t+\tfrac12 c_2 t^2)^{j}=0$. Dividing both sides by $(a_1-b_1t+\tfrac12 c_1 t^2)^{k-1}$ we obtain that a nonconstant rational function $f(t)=(a_1-b_1t+\tfrac12 c_1 t^2)^{-1} (a_2-b_2t+\tfrac12 c_2 t^2)$ satisfies a nontrivial polynomial equation $\sum_{j=0}^{k-1} \mu_j (f(t))^j=0$ for all $t \in \R$ (for which $f(t)$ is defined), a contradiction. \qedhere
\end{enumerate}
\end{proof}

It follows from  Lemma \ref{L:facts}\eqref{I:e1} that $\h \subset \g'$. Let $d=\dim (\h \cap \n)$. We have $d\leq\dim (\n)= 3$.
First of all, note that $d$ cannot equal $3$. Indeed, otherwise we would have $X_{2k+2},X_{2k+3}\in \h $,  contradicting Lemma \ref{L:facts}\eqref{I:eiei+1}.
Therefore $d \le 2$.

Let $N \in \n$ be a unit vector orthogonal to $\h \cap \n$. As $\h$ is a subalgebra and as $[\h,\h] \subset \n$, we have $\<[\h,\h], N\>=0$. As $\n$ is in the center of $\g'$, we obtain $\<J_NV, V\>=0$, where $V=\pi_\b(\h)$ and $\pi_\b \in \End(\g')$ is the orthogonal projection to $\b$. It follows that $\dim (\h) = \dim (\pi_\b(\h)) + \dim (\ker(\pi_\b) \cap \h)= \dim (V)+d \le k + 2$, from Lemma~\ref{l:Kabc}\eqref{it:Kb}, with the equality achieved only when $\dim (V)= k$ and $d=2$.

So if $\dim (\h) = k+2$, then $d=2$ and $\h=\Span(v_1+\lambda_1 N, \dots, v_k+\lambda_k N,$ $ N_2, N_3)$, where the vectors $v_i \in \b$ are linearly independent, $\Span_{i=1}^k(v_i)=V$, and $\{N,N_2,N_3\}$ is an orthonormal basis for $\n$. Then $\h^\perp=\Span(E_1, w_1+\mu_1 N, \dots, w_k+\mu_k N)$, where $E_1$ is the unit vector orthogonal to $\g'$, and $w_j \in \b$ for $j=1, \dots, k$. Denote $W=\Span_{i=1}^k(w_i)=\pi_\b(\h^\perp)$. Although the vectors $w_j$ may not be linearly independent, we have $V+W=\b$, so $\dim (W) \ge k-1$. We have $\dim (W) \le k$, so $\dim (W)$ is either $k$ or $k-1$.

Now by equation \eqref{L:basic}, with $X=w_j+\mu_jN, \; Y=v_i+\lambda_iN$ and $Z=N_\al$, $\al=2,3$, we obtain $\<[w_j,v_i],N_\al\>=0$, so $\<J_{N_\al}V,W\>=0$, for $\al=2,3$.

If $\dim (W)=k$, then $\dim (J_{N_\al}V) \le k-1$, so $\ker (J_{N_\al}) \cap V \not= 0$. By  Lemma~\ref{l:Kabc}\eqref{it:Ka}, $\dim (\ker (J_{N_\al}))=1$, and so $\ker (J_{N_\al}) \subset V$. By the same reasoning, $\ker (J_{N_\al}) \subset W$ and so $\ker (J_{N_\al}) \subset V \cap W$. Since $\dim(\b)=2k-1$ and $\dim (V)=\dim (W)=k$, we have $\dim (V\cap W)=1$. Hence  $\ker (J_{N_\al})=V \cap W$, for $\al=2, 3$. But this contradicts  Lemma~\ref{l:Kabc}\eqref{it:Kc}.

Next suppose that $\dim (W)=k-1$. Specifying the basis for $\h^\perp$ we can assume that $w_k=0$ and that $w_1, \dots, w_{k-1} \in \b$ are linearly independent, so $\h^\perp=\Span(E_1, w_1, \dots, w_{k-1}, N)$. Then $\<w_j, v_i\>=0$, so $W$ is the orthogonal complement to $V$ in $\b$. It now follows from $\<J_{N_\al}V,W\>=0$, for $\al=2,3$, that $V$ and $W$ are complementary invariant subspaces of both $J_{N_2}$ and $J_{N_3}$, hence of any $J_Z$, where $Z$ is a nonzero linear combination of $N_2$ and $N_3$. By Lemma~\ref{l:Kabc}\eqref{it:Ka}, $\dim(\ker (J_Z))=1$. As the projections of the kernel to invariant subspaces again lie in the kernel, we obtain that $\ker (J_Z)$ is a subspace of either $V$ or $W$. By continuity, the union $\mathcal{U}$ of the kernels of $J_Z$, taken over all nonzero $Z \in \Span(N_2,N_3)$, lies either in $V$ or in $W$. But by Lemma~\ref{l:Kabc}\eqref{it:Kd}, $\dim(\Span(\mathcal{U}))=k$, so $\Span(\mathcal{U})=V$ (as $\dim (V)=k$ and $\dim (W) = k-1$). Now take any two nonproportional $X, Y \in \mathcal{U}$. There exist nonzero vectors $Z_1=aN_2+bN_3, \; Z_2=cN_2+dN_3$ such that $\ker (J_{Z_1})=\Span(X), \; \ker (J_{Z_2})=\Span(Y)$ and moreover, by Lemma~\ref{l:Kabc}\eqref{it:Ka}, $Z_1$ and $Z_2$ are nonproportional, so $\Span(Z_1,Z_2)=\Span(N_2,N_3)$. As $\<J_{Z_1} X,Y\>=\<J_{Z_2} X,Y\>=0$, we obtain that $\<J_{N_2} X,Y\>=\<J_{N_3} X,Y\>=0$, for any pair of nonproportional vectors $X, Y \in \mathcal{U}$, hence, for arbitrary any pair of vectors $X, Y \in \mathcal{U}$, hence, for arbitrary vectors $X, Y \in V=\Span(\mathcal{U})$. It follows that $\<J_{N_\al}V,V\>=0$, for $\al=2,3$. As from the above, $\<J_{N_\al}V,W\>=0$, where $W$ is the orthogonal complement to $V$ in $\b$, we obtain that $\ker (J_{N_\al})$ contains the $k$-dimensional space $V,$ which strongly contradicts Lemma~\ref{l:Kabc}\eqref{it:Ka}.
\end{proof}


\section{Algebras from Table \ref{Table:2}}\label{S:table2}

 We treat the algebras $\g_{n,\alpha}$ in the order they appear in Table \ref{Table:2}.
 In this section there are numerous cases and subcases. For convenience, we adopt the following notational convention throughout this section: the expression $Y_i$ denotes an element of $\h$ of degree $i$. Similarly, the expression $Z_i$ denotes an element of $\hp$ of degree $i$. Furthermore, when we choose a basis $\{Y_{i_1},\dots,Y_{i_k}\ :\  i_1 < i_2 < \dots < i_k\}$, for $\h$, the assumption is that the elements are chosen so that $\langle Y_{i_j},E_{i_j}\rangle=1$ for each ${j}$, and that $Y_{i_j}$ has no component in the direction $E_{i_l}$ for $l>j$.  However we do not impose similar restrictions on the coefficients of our bases for $\hp$.

\begin{theorem}\label{T:g7}
Let $\h$ be a proper totally geodesic subalgebra of $\g_{7,\alpha}$. Then, for all $\alpha\in\R\backslash\{-2\}$, we have
$\dim(\h)\leq 3$.
\end{theorem}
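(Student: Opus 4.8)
The plan is to split on whether $X_7\in\h$, dispatching all but one value of $\al$ with the structural lemmas of Section~\ref{S:prel} and isolating $\al=-1$ for direct computation. The two top-degree brackets are $[X_2,X_5]=(1+\al)X_7$ and $[X_3,X_4]=X_7$, so $\g_{7,\al}\in\mathcal O_1$ exactly when $\al\neq-1$ (the constants $\al_2=1+\al$ and $\al_3=1$ are then both nonzero), whereas in all cases the quotient $\g_{7,\al}/\Span(X_7)\in\mathcal O_2$ for $\al\neq-2$: its relations $[X_1,X_i]=X_{i+1}$ ($i=2,\dots,5$), $[X_2,X_3]=(2+\al)X_5$, $[X_2,X_4]=(2+\al)X_6$ verify \eqref{E:O1} and \eqref{E:O2}.

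If $X_7\in\h$, then since $n=7$ is odd and $\g_{7,\al}/\Span(X_7)\in\mathcal O_2$, Lemma~\ref{L:Xninh}\eqref{I:Xninhb} gives $\dim(\h)\le(n-1)/2=3$ for every $\al\neq-2$. If $X_7\notin\h$ and $\al\neq-1$, then $\g_{7,\al}\in\mathcal O_1$ and Lemma~\ref{L:Xnnotinh}\eqref{I:2.1a} yields $\dim(\h)\le\lfloor 7/2\rfloor=3$; concretely, $[X_3,X_4]=X_7$ forbids $\h$ from carrying elements of both degree $3$ and degree $4$, and $[X_2,X_5]=(1+\al)X_7\neq0$ forbids degrees $2$ and $5$ together, so the admissible leading degrees leave at most three elements.

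The one remaining case, $\al=-1$ with $X_7\notin\h$, is the heart of the matter: now $[X_2,X_5]=0$, only the constraint from $[X_3,X_4]=X_7$ survives, and $\dim(\h)=4$ is a priori possible. Since $\h\subset\g_2$ contains no degree-$7$ element and cannot contain elements of both degree $3$ and degree $4$, a $4$-dimensional $\h$ must have leading degrees either $\{2,3,5,6\}$ or $\{2,4,5,6\}$. In both configurations Lemma~\ref{L:facts}\eqref{I:ei+aei+1} forces $E_6\in\h$, and the degree-$5$ generator must then be $E_5+cE_7$ with $c\neq0$ (else $E_5,E_6\in\h$, contradicting Lemma~\ref{L:facts}\eqref{I:eiei+1}). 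The strategy is to write $\hp=\Span(E_1,Z,Z')$ explicitly and feed well-chosen triples into \eqref{L:basic}.

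For the configuration $\{2,4,5,6\}$ this is clean: applying Lemma~\ref{L:facts}\eqref{I:kn-k-1} with $E_{n-1}=E_6$ first for $k=4$ (using $[X_4,X_2]=-X_6$) and then for $k=2$ (using $[X_2,X_4]=X_6$) forces the degree-$2$ and degree-$4$ generators to be exactly $E_2$ and $E_4$, whence $E_3\in\hp$; then \eqref{L:basic} applied to $X=E_3\in\hp$, $Y=E_4$, $Z=E_5+cE_7\in\h$ gives $c\langle[E_3,E_4],E_7\rangle=0$, impossible since $[X_3,X_4]=X_7$ makes $\langle[E_3,E_4],E_7\rangle\neq0$. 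For the configuration $\{2,3,5,6\}$ no such collapse onto the $E_i$ occurs, and I expect this to be the main obstacle. Writing $Y_2=E_2+aE_4+a'E_7$, $Y_3=E_3+\beta E_4+b'E_7$ and $\hp=\Span(E_1,Z_4,Z_7)$, one must combine several instances of \eqref{L:basic} (pairing $Z_4,Z_7$ against $Y_3,Y_5,E_6$) with the closure condition $[Y_2,Y_3]\in\h$, which kills the $E_7$-component of $[Y_2,Y_3]$. The subcase $a=0$ collapses at once, as \eqref{L:basic} then forces a relation of the shape $c\,\langle[E_3,E_4],E_7\rangle(1+\beta^2)=0$; the delicate computational core is to eliminate the surviving subcase $a\neq0$ by assembling the accumulated linear relations together with the identities among the structure constants of the orthonormal basis $\E$ implied by the Jacobi identity, and deriving a contradiction there.
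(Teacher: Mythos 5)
Your reduction to the single case $\alpha=-1$, $X_7\notin\h$, and your treatment of the configuration with leading degrees $\{2,4,5,6\}$, both track the paper's argument (the paper identifies $\g_{7,\alpha}/\Span(X_7)$ with $\m_2(6)$ rather than checking \eqref{E:O2} directly, but that is immaterial). However, the configuration $\{2,3,5,6\}$ is a genuine gap: you explicitly defer "the delicate computational core" of the subcase $a\neq 0$ to an unspecified computation with structure constants and Jacobi identities, and that subcase is precisely the one that actually occurs (if $a=0$ one gets $E_2,E_3\in\h$ modulo lower-order corrections, which Lemma~\ref{L:facts}\eqref{I:eiei+1} forbids). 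Moreover your claimed form $\hp=\Span(E_1,Z_4,Z_7)$ is impossible under the paper's conventions: Lemma~\ref{L:facts}\eqref{I:kn-k-1} applied with $E_6\in\h$ and $k=2$ (using $[X_2,X_4]=X_6$) rules out degree-$4$ elements of $\hp$, and $\langle E_7,Y_5\rangle=c\neq 0$ rules out degree-$7$ elements; the two non-$E_1$ leading degrees of $\hp$ must lie in $\{2,3,5\}$.

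The missing idea is Lemma~\ref{L:g/Xn}\eqref{I:g/Xna}. Since $E_7$ spans the center of $\g_{7,-1}$, its projection $\pi_\h(E_7)$ is central in $\h$; it is nonzero because $\langle E_7,Y_5\rangle=c\neq0$, and it must have degree at least $5$ because $[X_2,X_3]$ and $[X_2,X_4]$ are nonzero. Hence $\pi_\h(E_7)$ is proportional to $Y_5=E_5+cE_7$, so $cE_5-E_7\in\hp$, and therefore \emph{both} $Y_2$ and $Y_3$ have no $E_7$-component — exactly the fact your undetermined-coefficient setup ($a'$, $b'$) lacks. With that, $Y_3=E_3$ by Lemma~\ref{L:facts}\eqref{I:ei+aei+1}, $Y_2=E_2+cE_4$ with $c\neq0$ by Lemma~\ref{L:facts}\eqref{I:eiei+1}, so $Z=E_4-cE_2\in\hp$, and \eqref{L:basic} fails on the triple $(Z,Y_2,E_6)$ since $[Z,E_6]=0$ while $\langle[Z,Y_2],E_6\rangle=-(1+c^2)\langle[E_2,E_4],E_6\rangle\neq0$. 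I recommend you adopt this central-projection step (it also streamlines your $\{2,4,5,6\}$ case, where your counting argument recovers the same vector $cE_5-E_7\in\hp$ by a longer route) and then close the $\{2,3,5,6\}$ case as above.
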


\begin{proof}
By Remark \ref{R:codim}, our goal is to show that $\h$ cannot have dimension $4$. First suppose that $X_7\in\h$. As $\g_{7,\alpha}/\Span(X_7) \cong \m_2(6)\in\mathcal O_2$, we have $\dim(\h)\leq 3$ by Lemma~\ref{L:Xninh}\eqref{I:Xninhb}. Hence, we may suppose that $X_7\not\in\h$.

Note that if $\alpha\neq -1$, then $\g_{7,\alpha}\in\mathcal O_1$, and  Lemma \ref{L:Xnnotinh}\eqref{I:2.1a} gives the required result. So it remains to treat the case $\alpha= -1$. Suppose that $\h$ has dimension 4. Since $[X_3,X_4]=X_7\not\in\h$ and $E_1 \in \hp$ by Lemma~\ref{L:facts}\eqref{I:e1}, we can choose a basis so that $\h=\Span(Y_2,Y_i,Y_5,Y_6)$, where   $i=3$ or $4$.
Note that by Lemma \ref{L:facts}\eqref{I:ei+aei+1}, $Y_6=E_6$. Then we have $Y_5=E_5+a_5E_7$, for some $a_5\in \R$, where $a_5\neq 0$ by Lemma \ref{L:facts}\eqref{I:eiei+1}. So we may assume that $Y_2,Y_i$ have no component in the $E_5$ and $E_6$ directions. Moreover, the projection of $E_7$ to $\h$ is nonzero, as $a_5 \ne 0$ and has degree at least $5$, as it lies in the center of $\h$ by Lemma~\ref{L:g/Xn}\eqref{I:g/Xna} and as $[X_2, X_3]$ and $[X_2, X_4]$ are both nonzero. It follows that $\pi_\h (E_7) = \frac{a_5}{1+a_5^2} Y_5= \frac{a_5}{1+a_5^2} E_5+\frac{a_5^2}{1+a_5^2} E_7$, and so the vector $Z_5 = (1+a_5^2)(\pi_\h (E_7) - E_7)= a_5 E_5- E_7$ belongs to $\hp$. Then both $Y_2$ and $Y_i$ have no component in the $E_7$ direction.

Now if $i=3$, we have $Y_3=E_3$ by Lemma~\ref{L:facts}\eqref{I:ei+aei+1} and we may take $Y_2=E_2+c E_4$, where $c \ne 0$ by Lemma~\ref{L:facts}\eqref{I:eiei+1}. But then $Z_2 = E_4-cE_2 \in \hp$ and we get a contradiction with \eqref{L:basic}, as $[Z_2, E_6]=0$, but $\<[Z_2, Y_2], E_6\>=-(1+c^2)\<[E_2, E_4], E_6\> \ne 0$.

If $i=4$, then $Y_4=E_4$ and we may take $Y_2=E_2+bE_3$, so $Y_2=E_2$ by Lemma~\ref{L:facts}\eqref{I:ei+aei+1}. Then $E_3 \in \hp$ and we get a contradiction with \eqref{L:basic}, as $[Y_5, E_3]=0$, but $\<[E_3, E_4], Y_5\>=a_5\<[E_3, E_4], E_7\> \ne 0$.
\end{proof}


\begin{theorem}\label{T:g8}
Let $\h$ be a proper totally geodesic subalgebra of $\g_{8,\alpha}$. Then, for all $\alpha\in\R\backslash\{-2\}$, we have $\dim(\h)\leq 4$.
\end{theorem}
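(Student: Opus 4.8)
The plan is to establish the upper bound $\dim(\h)\le 4$ for $\g_{8,\al}$ by the same dichotomy we used for $\g_{7,\al}$, namely by splitting into the cases $X_8\in\h$ and $X_8\notin\h$. For the case $X_8\in\h$, I would pass to the quotient $\overline\g:=\g_{8,\al}/\Span(X_8)\cong\g_{7,\al}$ via Lemma~\ref{L:g/Xn}\eqref{I:g/Xnb}, which equips $\overline\g$ with an inner product making $\overline\h:=\pi(\h)$ totally geodesic. Since $X_7\notin\h$ (otherwise $X_7,X_8\in\h$ would contradict Lemma~\ref{L:facts}\eqref{I:eiei+1}), the image $\overline X_7\notin\overline\h$, so $\pi$ is injective on $\h$ and $\dim(\overline\h)=\dim(\h)-1$. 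Then Theorem~\ref{T:g7} gives $\dim(\overline\h)\le 3$, whence $\dim(\h)\le 4$ as required.

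The main work is the case $X_8\notin\h$, where I expect the principal obstacle to lie. Here the strategy is to rule out $\dim(\h)=5$ (since Remark~\ref{R:codim} already excludes codimensions one and two, the only remaining value above $4$ is $5$). First I would split according to whether $\g_{8,\al}$ lies in one of the families $\mathcal O_1$ or $\mathcal O_2$: examining the relations in Table~\ref{Table:2}, the coefficients $[X_i,X_{8-i}]$ determine membership, and for the generic values of $\al$ one expects $\g_{8,\al}\in\mathcal O_1$, so that Lemma~\ref{L:Xnnotinh}\eqref{I:2.1a} immediately yields $\dim(\h)\le\lfloor 8/2\rfloor=4$. The exceptional values of $\al$ are precisely those for which some structure constant $[X_i,X_{8-i}]$ vanishes, breaking the $\mathcal O_1$ condition $\al_i\ne 0$; from the relations $[X_2,X_6]=\al X_8$, $[X_3,X_5]=X_8$, $[X_1,X_7]=X_8$, the value $\al=0$ is the critical one to treat by hand.

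For each such exceptional $\al$, I would argue directly as in the $\al=-1$ subcase of Theorem~\ref{T:g7}, using Lemma~\ref{L:facts} and Lemma~\ref{L:zn-2,zn-1} to constrain which degrees can appear in $\h$ and $\hp$. Concretely, I would choose a degree-adapted basis $\{Y_{i_1},\dots,Y_{i_5}\}$ for $\h$ following the notational convention of this section, use Lemma~\ref{L:facts}\eqref{I:ei+aei+1} to force the top-degree generators into pure $E_i$ form, and then extract explicit elements $Z_j\in\hp$ of low degree. The contradiction should come from equation~\eqref{L:basic} applied to a triple $(Z_j,Y_p,Y_q)$ for which $[Z_j,Y_p]$ has degree $8$ and pairs nontrivially against $Y_q$ while the complementary bracket $[Z_j,Y_q]$ vanishes, exactly as in the two terminal contradictions of the $\g_{7,\al}$ proof. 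The hard part will be the bookkeeping of which low-degree vectors survive in $\hp$ when $X_8\notin\h$ forces a degree-$8$ element into $\hp$; here Lemma~\ref{L:g/Xn}\eqref{I:g/Xna}, identifying the center's projection with the center of $\h$, should again be decisive in pinning down the projection $\pi_\h(E_8)$ and thereby producing the needed $Z_j$.
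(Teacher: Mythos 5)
Your overall architecture is the same as the paper's: for $X_8\in\h$ you quotient by $\Span(X_8)$ to reach $\g_{7,\al}$ via Lemma~\ref{L:g/Xn}\eqref{I:g/Xnb} and invoke Theorem~\ref{T:g7}, and for $X_8\notin\h$ you correctly isolate $\al=0$ as the only value for which $\g_{8,\al}\notin\mathcal O_1$ (coming from $[X_2,X_6]=\al X_8$), all other values being dispatched by Lemma~\ref{L:Xnnotinh}\eqref{I:2.1a}. These parts are essentially right, modulo one small slip: $\pi$ is not injective on $\h$ in the first case (its kernel on $\h$ is exactly $\Span(X_8)$); what you need, and what still holds, is $\dim(\h)\le\dim(\overline\h)+1\le 4$.

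The genuine gap is that the case carrying all the content --- $\al=0$, $X_8\notin\h$, ruling out $\dim(\h)=5$ --- is only sketched, and the one concrete detail you give about the endgame cannot work. You propose a contradiction from a triple $(Z_j,Y_p,Y_q)$ in which $[Z_j,Y_p]$ has degree $8$ and pairs nontrivially against $Y_q$; but since $X_8\notin\h$, the subalgebra $\h$ contains no element of degree $8$, so there is no $Y_q$ available to receive a degree-$8$ bracket (and the candidate $[Z_2,Y_6]$ vanishes to leading order anyway, as $[X_2,X_6]=0$ when $\al=0$). The contradiction must instead be found one degree lower, in the spirit of Lemma~\ref{L:facts}\eqref{I:kn-k-1}: one first shows the degree set of $\h$ is forced to be $\{2,4,5,6,7\}$ (degree $8$ is excluded by hypothesis; degrees $3$ and $5$ cannot coexist since $[X_3,X_5]=X_8\notin\h$; and if $3$ were present then so would be $2$, whence $[Y_2,Y_3]$ of degree $5$, a contradiction), takes $Y_7=E_7$ by Lemma~\ref{L:facts}\eqref{I:ei+aei+1}, and then uses the element of degree $2$ or $3$ in $\hp$ guaranteed by Lemma~\ref{L:facts}\eqref{I:eiei+1}: either $\<[Z_2,Y_5],E_7\>\ne0$ or $\<[Z_3,Y_4],E_7\>\ne0$ violates \eqref{L:basic}, since the complementary brackets $[Z,E_7]$ vanish by the grading. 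Your appeal to Lemma~\ref{L:g/Xn}\eqref{I:g/Xna} to locate $\pi_\h(E_8)$ is neither needed nor sufficient to produce the required low-degree $Z$; as it stands, the decisive case is not proved.
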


\begin{proof}
First suppose that $X_8\in\h$. Note that $\g_{8,\alpha}/{\Span(X_{8})}\cong \g_{7,\alpha}$. Consider  the  quotient map
$\pi:\g_{8,\alpha}\to \g_{7,\alpha}$.
By Lemma~\ref{L:g/Xn}\eqref{I:g/Xnb}, there is an inner product on $\g_{7,\alpha}$ for which $\overline\h:=\pi(\h)$  is a totally geodesic subalgebra. Thus by Theorem \ref{T:g7},  we have $\dim(\bar\h) \le 3$, so $\dim(\h)\leq 4$.
So we may suppose that $X_8\not\in\h$.
If $\alpha\neq 0$ we have that $\g_{8,\alpha}\in\mathcal O_1$ so Lemma \ref{L:Xnnotinh}\eqref{I:2.1a} implies that $\dim(\h)\leq 4$. It remains to treat the case $\alpha =0$. By Remark \ref{R:codim}, $\dim(\h)<6$. Assume that $\dim(\h)=5$. Since $[X_3,X_5]=X_8 \not\in\h$, it follows that $\h$ cannot have both an element of degree 3 and an element of degree 5. In fact,  we can write $\h=\Span(Y_2,Y_4,Y_5,Y_6,Y_7)$; indeed, we could not have an element $Y_3$ of degree 3 in $\h$ since otherwise $\deg([Y_2,Y_3])=5$. It follows that $\hp$ has an element of degree 2 or 3, by Lemma~\ref{L:facts}\eqref{I:eiei+1}. Moreover, $Y_7=E_7$, by Lemma \ref{L:facts}\eqref{I:ei+aei+1}. However, if there was $Z\in\hp$ such that $\deg(Z)=2$ then $\<[Z,Y_5],Y_7\>\neq 0$ while for $Z\in\hp$ of degree 3 we would have $\<[Z,Y_4],Y_7\>\neq 0$. In each case we get a contradiction with \eqref{L:basic}.
\end{proof}


\begin{theorem}\label{T:g9}
Let $\h$ be a proper totally geodesic subalgebra of $\g_{9,\alpha}$. Then, for all  $\alpha\neq -\frac 52, -2$, we have
 $\dim(\h)\leq 4$.
\end{theorem}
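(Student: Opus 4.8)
The plan is to split on whether $X_9\in\h$, dispose of all but finitely many $\alpha$ by the reductions already available, and handle the exceptional values by a direct degree analysis. The two clean reductions are: if $X_9\notin\h$ and $\g_{9,\alpha}\in\mathcal O_1$, then Lemma~\ref{L:Xnnotinh}\eqref{I:2.1a} gives $\dim(\h)\le\lfloor 9/2\rfloor=4$; and if $X_9\in\h$, then, using $\g_{9,\alpha}/\Span(X_9)\cong\g_{8,\alpha}$ and Lemma~\ref{L:g/Xn}\eqref{I:g/Xnb}, the image $\pi(\h)$ is totally geodesic in $\g_{8,\alpha}$, so that if moreover $\g_{8,\alpha}\in\mathcal O_2$ then Lemma~\ref{L:Xninh}\eqref{I:Xninhb} gives $\dim(\h)\le 4$. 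Reading off the coupling coefficients, $\g_{9,\alpha}\in\mathcal O_1$ exactly when $[X_2,X_7]$, $[X_3,X_6]$ and $[X_4,X_5]$ are all nonzero, i.e. when $\alpha\notin\{\frac12,-1\}$; and $\g_{8,\alpha}\in\mathcal O_2$ exactly when $\alpha\notin\{-1,0\}$ (apart from $\alpha=-2$, which is excluded), the vanishing coefficients being $[X_2,X_5]=(1+\alpha)X_7$ at $\alpha=-1$ and $[X_2,X_6]=\alpha X_8$ at $\alpha=0$. Hence only four situations remain: $\alpha\in\{\frac12,-1\}$ with $X_9\notin\h$, and $\alpha\in\{-1,0\}$ with $X_9\in\h$.

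In each remaining situation I would show that $\dim(\h)=5$ is impossible; that this suffices follows because $\dim(\h)\le 6$ by Remark~\ref{R:codim}, while $\dim(\h)=6$ is excluded at once (if $X_9\in\h$ then $\dim\pi(\h)=5$ contradicts Theorem~\ref{T:g8}, and if $X_9\notin\h$ the pairing constraint below already forces $\dim(\h)\le 5$). So assume $\dim(\h)=5$. Since exactly one coupling coefficient vanishes, exactly one pair $(i,9-i)$ is decoupled, and because $[Y_i,Y_{9-i}]$ has degree $9$ whenever $[X_i,X_{9-i}]\ne0$ while no element of $\h$ has degree $9$ (as $X_9\notin\h$), a short count using $E_1\in\hp$ (Lemma~\ref{L:facts}\eqref{I:e1}) pins the degree set of $\h$ down to a handful of options: for $X_9\notin\h$ it must be $\{2,7,8\}\cup\{3\text{ or }6\}\cup\{4\text{ or }5\}$ when $\alpha=\frac12$, and $\{3,6,8\}\cup\{2\text{ or }7\}\cup\{4\text{ or }5\}$ when $\alpha=-1$; for $X_9\in\h$ one lists analogously the degree sets of $\pi(\h)\subset\g_{8,\alpha}$ and appends degree $9$.

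For each admissible degree sequence I would normalise the top components of the basis of $\h$ using Lemma~\ref{L:facts}\eqref{I:eiei+1}\eqref{I:ei+aei+1}, then manufacture a concrete element of $\hp$: as $\Span(X_9)$ is the centre of $\g_{9,\alpha}$, Lemma~\ref{L:g/Xn}\eqref{I:g/Xna} shows $\pi_\h(X_9)$ is central in $\h$, so it has controlled (high) leading degree and the vector $X_9-\pi_\h(X_9)\in\hp$ is explicit. Feeding a triple $(Z;Y_i,Y_j)$ with $Z\in\hp$ and $Y_i,Y_j\in\h$ into \eqref{L:basic}, I would choose $i,j$ so that one of the two terms $\langle[Z,Y_i],Y_j\rangle$, $\langle[Z,Y_j],Y_i\rangle$ is forced to vanish---because the bracket has degree exceeding $9$, or a vanishing coupling coefficient, or because $X_9$ is central---while the surviving term is a nonzero multiple of a coupling coefficient. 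This is the contradiction that excludes $\dim(\h)=5$.

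The main obstacle is precisely this last, entirely case-by-case, step, and two features make it delicate. First, in the present situation the degrees realised in $\hp$ are \emph{not} simply the complement of those realised in $\h$, so before activating a coupling I must verify from the chosen normalisation that $\hp$ genuinely contains an element of the degree required, rather than assume it. Second, the case $\alpha=0$ with $X_9\in\h$ is the least uniform: there $\g_{8,0}\notin\mathcal O_1$ because $[X_2,X_6]=0$, so no $\mathcal O$-type reduction applies to the quotient and the degree-$7$, degree-$8$ couplings of $\g_{8,0}$ must be combined by hand with the degree-$9$ couplings of $\g_{9,0}$. I expect the bookkeeping there to be the heaviest, but a single well-chosen instance of \eqref{L:basic} should settle each sub-case.
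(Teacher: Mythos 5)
Your proposal follows essentially the same route as the paper: the same two reductions (Lemma~\ref{L:Xnnotinh}\eqref{I:2.1a} for $X_9\notin\h$ when $\alpha\notin\{\tfrac12,-1\}$, and Lemma~\ref{L:Xninh}\eqref{I:Xninhb} via the quotient $\g_{9,\alpha}/\Span(X_9)\cong\g_{8,\alpha}$ when $\alpha\notin\{-1,0\}$), the same list of residual cases, and the same admissible degree sets for a hypothetical five-dimensional $\h$ (for instance $\{2,7,8\}$ together with one of $\{3,6\}$ and one of $\{4,5\}$ when $\alpha=\tfrac12$ and $X_9\notin\h$). Your computation of which coupling coefficients vanish is correct.

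The only shortfall is that the decisive step --- actually eliminating each residual configuration --- is asserted rather than carried out, and that is where the paper's proof lives. For what it is worth, the mechanism you describe is exactly the one the paper uses: each configuration dies by a single instance of \eqref{L:basic} of the shape you outline, usually packaged as Lemma~\ref{L:facts}\eqref{I:kn-k} or \eqref{I:kn-k-1} applied with the degree-$8$ or degree-$9$ element of $\h$, but it is preceded in each subcase by a short chain of deductions that first certifies that $\hp$ really contains an element of the degree needed to activate the coupling --- precisely the delicacy you flag. In case (c) ($\alpha=\tfrac12$, $X_9\notin\h$), for example, one must first rule out degree $6$ in $\h$, then degree $4$, then locate $Z_4\in\hp$ (or, in the complementary subcase, $Z_3\in\hp$) before the final pairing $\<[Z_4,Y_5],Y_7\>\neq0$ can be played off against $\<Y_7,E_9\>\neq0$. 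So the plan is sound and would go through, but as written it is an outline of the paper's argument rather than a complete proof.
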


\begin{proof}
Note that if $\alpha\not\in\{-1, \frac 12\}$, then $\g_{9,\alpha}\in\mathcal O_1$ and Lemma \ref{L:Xnnotinh}(a) implies that $\dim(\h)\leq 4$ if $X_9\not\in\h$. Furthermore, $\g_{9,\alpha}/\Span(X_9)\cong\g_{8,\alpha}$ and if $\alpha\not\in\{-1,0\}$, then $\g_{8,\alpha}\in\mathcal O_2$ and Lemma \ref{L:Xninh}\eqref{I:Xninhb} implies that $\dim(\h)\leq 4$ if $X_9\in\h$. So there are three remaining cases:
\begin{enumerate}[\rm (a)]
\item $\alpha= 0$ and $X_9\in\h$,
\item $\alpha=-1$,
\item $\alpha= \frac 12$ and $X_9\not\in\h$.
\end{enumerate}

(a) Suppose $\alpha= 0$ and $X_9\in\h$. By Lemma \ref{L:g/Xn}\eqref{I:g/Xnb} and Theorem \ref{T:g8}, we have $\dim(\h) \leq 5$. Note that by Lemma \ref{L:facts}\eqref{I:eiei+1}, $\h$ has no elements of degree 8. First assume that there is a degree 2 element $Z$ in $\hp$. Then by Lemma \ref{L:facts}\eqref{I:kn-k}, there are no elements of degree 7 in $\h$.
From $\deg([Y_2,Y_3])=5$, $\deg([Y_2,Y_5])=7$, $\deg([Y_3,Y_5])=8$ for arbitrary elements $Y_i$ of degree $i=2,3,5$, it follows that there exists at most one element $i\in\{2,3,5\}$ with  $Y_i$ in $\h$.
Hence $\dim(\h)\leq 4$. Thus we may assume that  $E_2\in\h$. By Lemma \ref{L:facts}\eqref{I:eiei+1}, there is a degree 3 element $Z$ in $\hp$, while by Lemma \ref{L:facts}\eqref{I:kn-k} there are no elements of degree 6 in $\h$.
Since $\deg([E_2,Y_4])=6$, $\deg([Y_3,Y_5])=8$, $\deg([E_2,Y_3])=5$ for arbitrary elements $Y_i$ of degree $i=3,4,5$, it follows that $\h$ has no elements of degree $3$ and $4$. 
Therefore, $\dim(\h)\leq 4$.

(b) For $\alpha= -1$, suppose first that $X_9\in\h$. Then by Lemma \ref{L:facts}\eqref{I:eiei+1}, $\h$ has no elements of degree 8. If $Y_2 \in \h$, then $\dim (\h )\le 4$, as all the elements $[Y_2,Y_6], \, \ad^2(E_2)(Y_4)$ and $\ad^2(Y_3)(E_2)$ have degree $8$, so $\h$ has no elements of degree $3,4$ and $6$. Otherwise, $E_2 \in \hp$, so by Lemma \ref{L:facts}\eqref{I:kn-k}, we have no elements of degree 7 in $\h$. As $\deg([Y_3,Y_5])=8$, $\h$ cannot contain elements of both degree $3$ and degree $5$, so $\dim(\h)\leq 4$. Now suppose that $X_9\notin\h$. Since $[X_2,X_7]=-X_9$ and $[X_4,X_5]=-X_9$ we have  $\dim(\h)\leq 5$. If $\dim(\h)=5$, then $\h$ has elements $Y_3,Y_6,Y_8$ of degree 3,6,8 respectively, and in particular, by Lemma~\ref{L:facts}\eqref{I:ei+aei+1} we may take $Y_8= E_8\in\h$. From Lemma \ref{L:facts}\eqref{I:kn-k-1} we conclude that there are no elements of degree 2 in $\hp$, and so $E_2\in\h$. Hence by Lemma~\ref{L:facts}\eqref{I:eiei+1}, there is an element $Z\in\hp$ of degree 3. However, as $\deg([E_2,Y_3])=5$, we obtain a contradiction with Lemma~\ref{L:facts}\eqref{I:kn-k-1}.

(c) For $\alpha= \frac 12$ and $X_9\not\in\h$, the subalgebra $\h$  has  dimension $\leq 5$ as $[X_3,X_6]=[X_4,X_5]=\frac 12 X_9$. Suppose  $\dim(\h)=5$. Arguing as in case (b), $\h$ is spanned by $E_8$ and some $Y_2,Y_i,Y_j,Y_7$, where $i$ is 3 or 6 and $j$ is 4 or 5. First assume that $\hp$ has an element $Z$ of  degree 2. Then by Lemma~\ref{L:facts}\eqref{I:kn-k-1},  $\h$ has no elements of degree 6. Thus $i=3$ and since $\deg([X_2,X_4])=6$, we also have $j=5$. Then $\hp$ can have no elements of degree $3, 5$ and $6$ by Lemma~\ref{L:facts}\eqref{I:kn-k-1}. It follows that $\hp$ has an element $Z_4$ of degree 4. But by Lemma \ref{L:facts}\eqref{I:eiei+1}, $\<Y_7,E_9\>\neq 0$ and so $\<\nabla_{Y_5}Y_7,Z_4\>\neq 0$ contradicting  \eqref{L:basic}. So we may assume there is no elements of degree 2 in $\hp$. In this case, $E_2\in\h$ and by Lemma \ref{L:facts}\eqref{I:eiei+1}, there must be an element $Z_3\in\hp$ of degree 3. Then by Lemma \ref{L:facts}\eqref{I:kn-k-1}, $\h$ has no elements of degree 5. Consequently, since $[X_2,X_3]=X_5$, the algebra  $\h$ also has no elements of degree $3$. So $\h$ must have an element of degree $6$. However, $\<Y_7,E_9\>\neq 0$ by Lemma~\ref{L:facts}\eqref{I:eiei+1}, so $\<\nabla_{Y_6}Y_7,Z_3\>\neq 0$ contradicting \eqref{L:basic}. This completes the proof of the theorem.
\end{proof}


\begin{theorem}\label{T:g10}
Let $\h$ be a proper totally geodesic subalgebra of   $\g_{10,\alpha}$. Then, for all $\alpha\neq -\frac 52$, we have $\dim(\h)\leq 5$.
\end{theorem}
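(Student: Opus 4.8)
The plan is to split on whether $X_{10}\in\h$, exactly as in the proofs of Theorems \ref{T:g7}--\ref{T:g9}. First I would dispose of the case $X_{10}\in\h$ by passing to the quotient. Since $X_{10}$ is central, $\mathfrak i=\Span(X_{10})$ is an ideal with $\mathfrak i=\mathfrak i\cap\h$, so Lemma \ref{L:g/Xn}\eqref{I:g/Xnb} supplies an inner product on $\overline{\g}:=\g_{10,\alpha}/\Span(X_{10})\cong\g_{9,\alpha}$ for which $\overline{\h}:=\pi(\h)$ is totally geodesic. For $\alpha\neq-2$ (recall $\alpha\neq-\tfrac52$ throughout), Theorem \ref{T:g9} gives $\dim(\overline{\h})\leq 4$; for $\alpha=-2$ one has $\g_{9,-2}\cong\m_{0,3}(9)$ by Remark \ref{R:DMilTheorem}, and Theorem \ref{T:m0,3} (with $k=3$) again gives $\dim(\overline{\h})\leq 4$. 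In either case $\dim(\h)=\dim(\overline{\h})+1\leq 5$.

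It remains to treat $X_{10}\notin\h$. Here I would first record the structure constants $[X_i,X_{10-i}]=\alpha_iX_{10}$, namely $\alpha_2=\tfrac{(2\alpha-1)(\alpha+1)}{2\alpha+5}$, $\alpha_3=\tfrac{2\alpha-1}{2\alpha+5}$ and $\alpha_4=\tfrac{3}{2\alpha+5}$. Thus $\g_{10,\alpha}\in\mathcal O_1$ precisely when all three are nonzero, i.e.\ when $\alpha\notin\{\tfrac12,-1\}$ (the value $\tfrac12$ kills $\alpha_2$ and $\alpha_3$, the value $-1$ kills $\alpha_2$). For these generic values, Lemma \ref{L:Xnnotinh}\eqref{I:2.1a} immediately yields $\dim(\h)\leq\lfloor 10/2\rfloor=5$, so only the two exceptional values $\alpha=\tfrac12$ and $\alpha=-1$ require hands-on work.

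For each of $\alpha\in\{\tfrac12,-1\}$ (with $X_{10}\notin\h$), I would assume $\dim(\h)\geq 6$ and argue combinatorially about the set $D\subseteq\{2,\dots,9\}$ of degrees occurring in $\h$. Since $E_1\in\hp$ by Lemma \ref{L:facts}\eqref{I:e1} and degree $10$ is excluded, we have $|D|=\dim(\h)\geq 6$. The subalgebra property yields two kinds of restriction: a forbidden pair $\{i,10-i\}\not\subseteq D$ whenever $[X_i,X_{10-i}]\neq 0$, and forced inclusions $i+j\in D$ whenever $i,j\in D$, $i+j\leq 9$, and $[X_i,X_j]$ is a nonzero multiple of $X_{i+j}$. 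Iterating these produces further forbidden pairs; for instance at $\alpha=-1$ the pairs $\{2,4\}$ and $\{3,4\}$ become forbidden, since they force an element of degree $6$ or $7$ that then brackets with $X_4$ or $X_3$ up to a nonzero multiple of $X_{10}$. Listing the nonzero brackets at each exceptional $\alpha$ and solving these constraints leaves only a short list of candidate sets $D$, every one of which contains $9$; hence $E_9\in\h$ by Lemma \ref{L:facts}\eqref{I:ei+aei+1}.

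The final step, where I expect the real work to lie, is to eliminate each surviving $D$ using the totally geodesic condition rather than mere subalgebra closure. Because $\hp$ has degrees $\{1,10\}\cup(\{2,\dots,9\}\setminus D)$, in every surviving case one can locate an index $k$ with $Y_k\in\h$, a degree-$(9-k)$ element in $\hp$, and $[X_k,X_{9-k}]$ a nonzero multiple of $X_9$; since $E_9\in\h$, this contradicts Lemma \ref{L:facts}\eqref{I:kn-k-1}. The only other dimension permitted by Remark \ref{R:codim} is $\dim(\h)=7$, i.e.\ $|D|=7$, and there the constraints leave at most one candidate $D$, eliminated in the same manner. The main obstacle is precisely this bookkeeping in the two exceptional cases: the vanishing of $\alpha_2$ (and of $\alpha_3$ at $\alpha=\tfrac12$) weakens the subalgebra constraints enough that closure alone does not force a contradiction, so one genuinely must feed $E_9\in\h$ into Lemma \ref{L:facts}\eqref{I:kn-k-1} to finish, after verifying that no subalgebra-consistent degree set escapes it.
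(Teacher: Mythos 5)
Your reduction steps are fine: the quotient argument for $X_{10}\in\h$ (including the variant for $\alpha=-2$ via $\g_{9,-2}\cong\m_{0,3}(9)$ and Theorem~\ref{T:m0,3}, which is a legitimate shortcut around the paper's direct argument there), and the identification of $\alpha=\tfrac12$ and $\alpha=-1$ as the only values for which $\g_{10,\alpha}\notin\mathcal O_1$, all match the paper. The gap is in the exceptional cases, which is exactly where the difficulty of this theorem lives, and your sketch there rests on a false premise. You assert that when $X_{10}\notin\h$ the degrees occurring in $\hp$ are $\{1,10\}\cup(\{2,\dots,9\}\setminus D)$. This presumes $E_{10}\in\hp$, which need not hold: if $X_{10}\notin\h$ but $X_{10}$ is not orthogonal to $\h$, then neither $\h$ nor $\hp$ contains an element of degree $10$, and the two degree sets must overlap in some $m\le 9$. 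The paper exploits precisely this: since $X_{10}$ is central, $Y=\pi_\h(X_{10})$ is central in $\h$ by Lemma~\ref{L:g/Xn}\eqref{I:g/Xna}, its degree $m$ is a degree shared by $\h$ and $\hp$ (via $Z=Y-E_{10}\in\hp$), and the centrality of $Y$ is what forbids enough degrees from $D$ in most subcases. Without this input your combinatorial bookkeeping does not close.

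Moreover, your claimed finishing move --- that every surviving degree set is killed by Lemma~\ref{L:facts}\eqref{I:kn-k-1} alone --- is not true. For $\alpha=\tfrac12$ the only nonzero brackets into $X_9$ are $[X_3,X_6]$ and $[X_4,X_5]$, and in the paper's subcase (iii) one reaches $\h=\Span(Y_2,Y_3,Y_5,Y_7,Y_8,E_9)$ with $\hp=\Span(E_1,Z_2,Z_3,Z_7)$: there Lemma~\ref{L:facts}\eqref{I:kn-k-1} is used in the opposite direction, to show $\hp$ has \emph{no} elements of degree $4$ or $6$, so it cannot produce the contradiction, and the paper must instead compute $\<[Z_2,Y_3],E_5\>\ne0$ directly against \eqref{L:basic}. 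The full argument also needs Lemma~\ref{L:zn-2,zn-1}, and for $\alpha=-1$ a quotient by the ideal $(\g_{10,-1})_8$ reducing to Theorem~\ref{T:g7}. So the hard half of the proof is missing, not merely unelaborated.
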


\begin{proof}
If $X_{10}\in\h$ and $\alpha\neq -2$ we may consider the quotient Lie algebra $\g_{10,\alpha}/\Span(X_{10})\cong\g_{9,\alpha}$ and the required result follows from that by Theorem \ref{T:g9}.

Suppose that $X_{10}\in\h$ and $\alpha=-2$. By Lemma \ref{L:facts}(b), there are no elements of degree 9 in $\h$. Note that $[X_3,X_6]=-2X_9$ and $[X_4,X_5]=3X_9$. If there is an element of degree 2 in $\hp$, Lemma \ref{L:facts}\eqref{I:kn-k} implies that there are no element in $\h$ of degree 8, from which it follows that $\dim(\h)\leq 5$. On the other hand, if there are no elements of degree 2 in $\hp$, there exists an element of degree 3 in $\hp$, by Lemma \ref{L:facts}(b). It follows that there are no elements of degree 7 in $\h$, again giving that $\dim(\h)\leq 5$.
So we may suppose that $X_{10}\not\in\h$.

If  $\alpha\not\in\{ \frac 12, -1\}$, then $\g_{10,\alpha}\in\mathcal O_1$, hence by Lemma \ref{L:Xnnotinh}\eqref{I:2.1a} we get $\dim(\h)\leq 5$. So there are two special cases that remain to be considered: $\alpha=\frac 12$ and $\alpha=-1$.

By Lemma~\ref{L:g/Xn}\eqref{I:g/Xna}, the vector $Y=\pi_\h (X_{10})$ lies in the center of $\h$. If $Y = 0$, then $X_{10} \in \hp$, so by Lemma~\ref{L:g/Xn}\eqref{I:g/Xnb} and Theorem~\ref{T:g9}, $\dim (\h) \le 4$, as $\g_{10,\alpha}/\Span(X_{10})\cong\g_{9,\alpha}$. Let $m=\deg (Y)$. Suppose that $m = 2$. As $Y$ lies in the center of $\h$ and as $[X_2,X_3], [X_2,X_4]$ and $[X_2,X_6]$ are all nonzero, $\h$ contains no elements of degree $3, 4$ or $6$, so $\dim (\h) \le 5$. Similarly, if $m = 3$, then $\h$ contains no elements of degree $2, 4$ or $5$, and if $m = 4$, then $\h$ contains no elements of degree $2, 3, 5$ or $6$. In both cases, $\dim (\h) \le 5$. Furthermore, if $m=5$ and $\alpha=\frac12$, then $\h$ contains no elements of degree $2, 3$ or $4$, so $\dim (\h) \le 5$, and if $m=5$ and $\alpha=-1$, then $\h$ contains no elements of degree $3$ or $4$, so $\dim (\h) \le 5$ unless $\h=\Span(Y_2,Y_5,Y_6,Y_7,Y_8,Y_9)$. But then we may take $Y_9=E_9$ by Lemma~\ref{L:facts}\eqref{I:eiei+1}, so $\hp$ has no elements of degree $2$ or $4$ by Lemma~\ref{L:facts}\eqref{I:kn-k-1}, and hence $\hp$ has at least two linearly independent elements in $\g_5$, contradicting the fact that $\dim(\h \cap \g_5)=5$ and $\dim(\g_5)=6$. Likewise, if $m=6$ and $\alpha=\frac12$, then $\h$ contains no elements of degree $2, 3$ or $4$, so $\dim (\h) \le 5$, and if $m=6$ and $\alpha=-1$, then $\h$ contains no elements of degree $2$ or $4$, so $\dim (\h) \le 5$ unless $\h=\Span(Y_3,Y_5,Y_6,Y_7,Y_8,Y_9)$. Then $E_2 \in \hp$ and we may take $Y_9=E_9$ by Lemma~\ref{L:facts}\eqref{I:eiei+1}. By Lemma~\ref{L:facts}\eqref{I:kn-k-1} we get a contradiction with the fact that $Y_7 \in \h$. Finally, if $m=7$ and $\alpha=-1$, then $\h$ contains no elements of degree $2$ or $3$, so $\dim (\h) \le 5$ unless $\h=\Span(Y_4,Y_5,Y_6,Y_7,Y_8,Y_9)$. But then $E_2, E_3 \in \hp$ and we may take $Y_9=E_9$ by Lemma~\ref{L:facts}\eqref{I:eiei+1}, which leads to a contradiction with Lemma~\ref{L:facts}\eqref{I:kn-k-1}, as $Y_7 \in \h$.

It remains to consider the cases when either $m=7$ and $\alpha=\frac12$ or $m=8$. Note that in both cases $\h$ contains the vector $Y = \pi_\h (E_{10})$ of degree $m$, and then $\hp$ contains the vector $Z=Y-E_{10}$, also of degree $m$. Consider two cases.


\emph{The Case $\alpha=-1$}. Then both $\h$ and $\hp$ contain an element of degree $8$, say $Y_8$ and $Z_8$ respectively. Since $[X_4,X_6]=-[X_3,X_7]=X_{10}\not\in\h$, the subalgebra $\h$ cannot have elements of both degree 3 and degree 7, and nor can it have elements of both degree 4 and degree 6. So $\dim(\h)\leq 6$.  Suppose that $\dim(\h)= 6$. Since $[X_2,X_4]= X_6$, we have $\h=\Span(Y_2,Y_3,Y_5,Y_6,Y_8,E_9)$ or $\h=\Span(Y_2,Y_5,Y_6,Y_7,Y_8,E_9)$. In both cases, the three-dimensional ideal $(\g_{10,-1})_8=\Span(X_8,X_9,X_{10}) \subset \g_{10,-1}$ contains linearly independent vectors $Y_8, E_9 \in \h$ and $Z_8 \in \hp$, hence $(\g_{10,-1})_8=\Span(Y_8,E_9,Z_8)$. Then by Lemma~\ref{L:g/Xn}\eqref{I:g/Xnb}, there is an inner product on the algebra $\g_{10,-1}/(\g_{10,-1})_8\cong\g_{7,-1}$, for which $\overline\h=\pi(\h)$  is a totally geodesic subalgebra. But $\dim (\overline\h)=4$, which contradicts Theorem~\ref{T:g7}.


\emph{The Case $\alpha=\frac 12$}. Then both $\h$ and $\hp$ contain either an element of degree~$7$ or an element of degree $8$.

As $[X_4,X_6]=\frac 12X_{10}\not\in\h$ and $E_1\in\hp$, we have $\dim(\h)\leq 7$.
Suppose first that $\dim(\h)=7$. Since $\h$ does not have both an element of degree 4 and an element of degree 6,
and since $[X_2,X_4]=\frac52 X_6$, we may write $\h=\Span(Y_2,Y_3,Y_5,\ldots, Y_9)$.
But then by Lemma~\ref{L:zn-2,zn-1}, $\hp$ has no elements of degree $\geq 7$, a contradiction.

Now suppose that $\dim(\h)=6$. We consider three subcases:
\begin{enumerate}
\item[(i)] there are no elements of degree 3 in $\h$,
\item[(ii)] there is an element $Y_3$ of degree 3 but no elements of degree 2 in $\h$,
\item[(iii)] there are elements $Y_2,Y_3\in\h$ of degree 2 and 3 respectively.
\end{enumerate}

{\it Subcase} (i): Arguing as above, since $\dim(\h)=6$, $[X_2,X_4]=\frac 52 X_6$ and $[X_4,X_6]=\frac 12 X_{10}\not\in\h$, we may write $\h=\Span(Y_2,Y_5,Y_6,Y_7,Y_8,E_9)$, so by Lemma~\ref{L:zn-2,zn-1}, $\hp$ has no elements of degree $7$ or $8$, a contradiction.

{\it Subcase} (ii): We have $E_2\in\hp$. As $\dim(\h)=6$ we have $\h=\Span(Y_3,Y_4,Y_5,$ $Y_7,Y_8,E_9)$ or $\h=\Span(Y_3,Y_5,Y_6,Y_7,Y_8,E_9)$. In the latter case, $\hp$ has no elements of degree $7$ or $8$, by Lemma \ref{L:zn-2,zn-1}, a contradiction. In the former case, suppose for the moment that there exists $Z_8\in\hp$. As $\Span(Y_8,E_9,Z_8)=\Span(E_8,E_9,E_{10})$, we can choose $Y_7 = E_7$ and then $Y_5 \in \Span(E_5,E_6)$, so $E_5\in\h$, by Lemma~\ref{L:facts}\eqref{I:ei+aei+1}. But since $[X_2,X_7]=[X_2,X_8]=[X_3,X_7]=0$, we have $[E_2,E_7]=0$ and hence $2\<\nabla_{E_5}E_7,E_2\>=\<[E_2,E_5],E_7\>\neq 0$, contradicting \eqref{L:basic}. It follows that $\hp$ has no elements of degree 8, and therefore, by Lemma \ref{L:facts}\eqref{I:kn-k-1}, we have $\hp=\Span(E_1,E_2,Z_3,Z_7)$. Arguing as before, we obtain $E_5\in\h$.  We have $Y_4:=E_4+a_{4,6}E_6+a_{4,10}E_{10}$ for some $a_{4,6}, a_{4,10}\in\R$. Then  $\<Y_4,Z_7\>=0$ implies $a_{4,10}=0$. Then by Lemma~\ref{L:facts}\eqref{I:eiei+1}, $a_{4,6}\neq 0$ as $E_5\in\h$. However, $\<\nabla_{Y_4}Y_4,E_2\>=a_{4,6}\<[E_2,E_4],E_6\>\neq 0$,
contradicting \eqref{L:basic}.

{\it Subcase} (iii): Here $\h$ contains an element $[Y_2,Y_3]$ of degree $5$ and elements $\ad^2(Y_2)(Y_3)$ and $\ad^2(Y_3)(Y_2)$ of degree $7$ and $8$ respectively. Moreover, $\h$ has no elements of degree $4$, by the same argument used in subcase (i), and no element $Y_6$ of degree $6$, as otherwise $\h$ would also contain an element $[Y_3,Y_6]$ of degree $9$ contradicting the fact that $\dim (\h) = 6$. So $\h=\Span(Y_2,Y_3,Y_5,Y_7,Y_8,E_9)$. We may assume that $Y_5:=E_5+a_{5,6}E_6+a_{5,10}E_{10}$ and $Y_i:=E_i+a_{i,10}E_{10}$, $i=7,8$. By Lemma~\ref{L:facts}\eqref{I:kn-k-1} there are no elements of degree 4 or 6 in $\hp$. Moreover, by the same argument used in subcase (ii), $\hp$ has no elements of degree 8. It follows that $\hp$ contains an element $Z_7$ of degree $7$. Then we have $E_5\in\h$ and so $\hp=\Span(E_1,Z_2,Z_3,Z_7)$. Moreover, without loss of generality we may assume that $\<Y_3,E_j\>=0$ for  $j= 7,8,9,10$. The contradiction with \eqref{L:basic} is then obtained from
\begin{equation*}
0=2\<\nabla_{Y_3}E_5,Z_2\>=\<[Z_2,Y_3],E_5\>+\<[Z_2,E_5],Y_3\>=\<[Z_2,Y_3],E_5\>.\qedhere
\end{equation*}
\end{proof}


\begin{theorem}\label{T:g11}
Let $\h$ be a proper totally geodesic subalgebra of  $\g_{11,\alpha}$. Then, for all $\alpha\not\in\{-\frac 52, -1,-3\}$, we have
$\dim(\h)\leq 5$.
\end{theorem}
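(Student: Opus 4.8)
The plan is to rule out $\dim(\h)\geq 6$, since by Remark~\ref{R:codim} the codimension is at least three and $\lfloor 11/2\rfloor=5$. Following the pattern of the previous theorems, I would split on whether $X_{11}\in\h$, using throughout the isomorphism $\g_{11,\alpha}/\Span(X_{11})\cong\g_{10,\alpha}$ and the fact (Lemma~\ref{L:facts}\eqref{I:eiei+1}) that the image $\overline X_{10}$ of $X_{10}$ never lies in $\overline\h:=\pi(\h)$, since $X_{10},X_{11}\in\h$ is forbidden.

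First, the case $X_{11}\in\h$. Here $n=11$ is odd, so Lemma~\ref{L:Xninh}\eqref{I:Xninhb} yields $\dim(\h)\leq 5$ as soon as $\g_{10,\alpha}\in\mathcal{O}_2$. Reading off the structure constants of $\g_{10,\alpha}$ from Table~\ref{Table:2}, one checks that $\g_{10,\alpha}\in\mathcal{O}_2$ for every admissible $\alpha\notin\{-2,-1,0,\frac12\}$; since $-1$ and $-\frac52$ are already excluded, this settles all $\alpha$ except $\alpha\in\{-2,0,\frac12\}$. For each of those three values I would instead combine Lemma~\ref{L:g/Xn}\eqref{I:g/Xnb} with Theorem~\ref{T:g10} to get the weaker bound $\dim(\h)\leq 6$, and then exclude equality directly: assuming $\dim(\h)=6$ I would use that $\h$ has no degree-$10$ element by Lemma~\ref{L:facts}\eqref{I:eiei+1}, and eliminate the finitely many resulting degree patterns by the totally geodesic identity \eqref{L:basic} applied to triples involving $E_1\in\hp$ and the brackets that, at these special $\alpha$, land in degrees $\leq 10$, exactly in the spirit of the arguments in Theorems~\ref{T:g8}--\ref{T:g10}.

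Second, the case $X_{11}\notin\h$. If $\g_{11,\alpha}\in\mathcal{O}_1$, then Lemma~\ref{L:Xnnotinh}\eqref{I:2.1a} gives $\dim(\h)\leq 5$ at once. From the top-degree relations of Table~\ref{Table:2} this holds unless one of the numerators $4\alpha+1$, $2\alpha^3+2\alpha^2+3$, or $4\alpha^3+8\alpha^2-8\alpha-21$ vanishes (the coefficient $2\alpha^2+4\alpha+5$ of $[X_4,X_7]$ has negative discriminant, hence no real root). So only $\alpha=-\frac14$ and the real roots of the two cubics remain. For these I would assume $\dim(\h)=6$ and observe that exactly one of the pairs $\{2,9\}$, $\{3,8\}$, $\{5,6\}$ then has vanishing bracket into $X_{11}$, while $\{4,7\}$ never does; a dimension count using that $\h$ can contain at most one degree from each surviving pair forces the degree set of $\h$ to be that free pair, together with one degree from each remaining pair and the degree $10$. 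Imposing that $\h$ is a subalgebra (so every bracket of its elements must land again in a degree $\leq 10$) drastically prunes these sets. Each survivor either contains both degree $9$ and degree $10$, which is impossible by Lemma~\ref{L:zn-2,zn-1}: it would put $\pi_{\hp}(X_{11})$, a genuine degree-$11$ vector (genuine because $\|\pi_\h(X_{11})\|^2<1$ as $X_{11}\notin\h$), into $\hp$, contradicting the lemma's conclusion that $\hp$ has no degree-$11$ element; or else it is removed by closure failure or a single application of \eqref{L:basic}, after fixing a graded-adapted basis of $\h$ and $\hp$.

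The main obstacle is the special-value analysis, and in particular the case $X_{11}\notin\h$ at the two \emph{irrational} cubic roots, where the clean $\mathcal{O}_1/\mathcal{O}_2$ machinery is unavailable and $\alpha$ cannot simply be substituted numerically, so one must reason purely from the qualitative vanishing pattern of the brackets. The delicate configurations are those whose surviving degree set has no two consecutive top degrees: there Lemma~\ref{L:zn-2,zn-1} is toothless, and one is reduced to the same painstaking bookkeeping as in Theorem~\ref{T:g10} --- tracking precisely which $E_i$-components each basis vector of $\h$ and of $\hp$ may carry, using the centrality of $\pi_\h(X_{11})$ from Lemma~\ref{L:g/Xn}\eqref{I:g/Xna} to suppress most of them, and extracting the final contradiction from \eqref{L:basic} on a carefully chosen triple. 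Because $\g_{11,\alpha}$ has more degrees and more exceptional $\alpha$ than $\g_{10,\alpha}$, this closing bookkeeping is the longest and most error-prone part of the argument.
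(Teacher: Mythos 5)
Your overall architecture (split on $X_{11}\in\h$, dispatch generic $\alpha$ via the $\mathcal O_1$/$\mathcal O_2$ lemmas, isolate the special values $\alpha\in\{-2,0,\frac12\}$ when $X_{11}\in\h$ and $\alpha\in\{-\frac14,\alpha_1,\alpha_2\}$ when $X_{11}\notin\h$) is exactly the paper's, and your identification of the special values, including the observation that $2\alpha^2+4\alpha+5$ has no real root, is correct. But the mechanism you propose for killing the surviving configurations in the case $X_{11}\notin\h$ is broken. You claim that a degree set containing both $9$ and $10$ is ``impossible by Lemma~\ref{L:zn-2,zn-1}'' because $\pi_{\hp}(X_{11})$ would be a degree-$11$ element of $\hp$. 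This is wrong on two counts. First, a degree-$11$ element is by definition a nonzero vector of $\g_{11}=\Span(X_{11})$; if $\pi_\h(X_{11})\ne 0$ (which is exactly what $X_{11}\notin\hp$ means), then $\pi_{\hp}(X_{11})=X_{11}-\pi_\h(X_{11})$ has degree equal to $\deg(\pi_\h(X_{11}))<11$, not $11$ --- your parenthetical justification via $\|\pi_\h(X_{11})\|^2<1$ establishes only that $\pi_{\hp}(X_{11})\neq 0$, not that it has top degree. Second, the conclusion of Lemma~\ref{L:zn-2,zn-1} with $p=9$ is merely that $\hp$ contains no degree-$11$ element, i.e.\ $X_{11}\notin\hp$; this is perfectly consistent (it just forces $Y_9=E_9+aE_{11}$ with $a\neq0$) and is in fact \emph{used constructively}, not as a contradiction, in the paper's treatment of $\alpha=\alpha_1$, where the final configuration $\h=\Span(E_2,Y_4,Y_6,Y_8,Y_9,E_{10})$ does contain elements of degrees $9$ and $10$ and is only eliminated by a further \eqref{L:basic} computation exploiting that $[X_5,X_6]$ is a nonzero multiple of $X_{11}$.

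Because that shortcut fails, the heart of the $X_{11}\notin\h$ analysis --- and, by your own admission, the ``closing bookkeeping'' in the $X_{11}\in\h$ cases $\alpha\in\{-2,0,\frac12\}$ --- remains undone. The paper's actual eliminations use Lemma~\ref{L:facts}\eqref{I:kn-k}--\eqref{I:kn-k-1} to pin down the degree profiles of both $\h$ and $\hp$, closure of $\h$ under iterated brackets (e.g.\ $\ad^4(E_2)(Y_3)$ having degree $11$ for $\alpha=\alpha_2$), and, for $\alpha=\frac12$ with $X_{11}\in\h$, a quotient by the ideal $\Span(E_{10},E_{11})$ down to $\g_{9,\frac12}$ followed by an appeal to Theorem~\ref{T:g9} --- none of which is a ``single application of \eqref{L:basic}.'' As it stands, your proposal is a correct plan for the generic $\alpha$ with a genuine gap at every exceptional value.
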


\begin{proof}
Consider two cases: $X_{11}\in\h$ and $X_{11}\not\in\h$.

\emph{Case $X_{11}\in\h$}:
Consider the quotient map $\pi:\g_{11,\alpha}\to \g_{11,\alpha}/\Span(X_{11})$. For $\alpha\not\in\{ -2,0,\frac 12\}$, we have $\g_{11,\alpha}/\Span(X_{11})\cong \g_{10,\alpha}\in \mathcal O_2$,
and Lemma \ref{L:Xninh}\eqref{I:Xninhb} gives $\dim(\h)\leq 5$.
Let $\alpha\in\{ -2,0,\frac 12\}$. Lemma \ref{L:g/Xn}\eqref{I:g/Xnb} and Theorem \ref{T:g10} give $\dim(\pi(\h))\leq 5$ and so
$\dim(\h)\leq 6$.  Assume $\dim(\h)= 6$. Note that $\h$ has no elements of degree $10$ by Lemma \ref{L:facts}\eqref{I:eiei+1}. 

If $\alpha\not=\frac12$, then $[X_2,X_8],[X_3,X_7],[X_4,X_6]$ are all nonzero multiplies of $X_{10}$, so $\h$ cannot have elements of both degree $2$ and degree $8$, nor can it have elements of both degree $3$ and degree $7$, nor of both degree $4$ and degree $6$. Then $\h$ necessarily contains elements $Y_5,Y_9$. By Lemma~\ref{L:facts}\eqref{I:ei+aei+1} we may take $Y_9=E_9$. Then by Lemma \ref{L:facts}\eqref{I:kn-k}, $\hp$ has no elements of degree $2$, and so we have $E_2\in\h$. Consequently, $\h$ has no elements of degree $8$. Since $Y_5\in\h$ and $[X_3,X_5]=X_8, \; [X_2,X_5]=(1+\alpha) X_7$, we conclude that $\h$ has an element of degree $7$, but no elements of degree $3$. It follows that $\h=\Span(E_2,Y_j,Y_5,Y_7,E_9,E_{11})$, where $j=4$ or $6$.

First suppose $\alpha=-2$. Since $[X_2,X_6]=-2X_8$ and $\h$ has no elements of degree 8, $\h$ has no elements of degree 6. Hence $\h$ has an element of degree 4. Then by Lemma \ref{L:facts}\eqref{I:kn-k}, $\hp$ has no elements of degree 2, 4, 6, 7 or 9. So we can write $\hp=\Span(E_1,E_3,Z_5,Z_8,E_{10})$. But then since $[X_4, X_5] =3X_9$, we have $\<\nabla_{Y_4}E_9,Z_5\>\neq 0$, contradicting \eqref{L:basic}.

Now suppose $\alpha=0$. Since $\h$ does not have elements of both degree 4 and 6, and since $[X_2,X_4]=2X_6$, the subalgebra $\h$ has no elements of degree 4, and hence it must have one of degree 6.  So we have $\h=\Span(E_2,Y_5,Y_6,Y_7,E_9,E_{11})$. But since $E_3\in\hp$ and $[X_3, X_6] $ is a nonzero multiple of $X_9$, we have $\<\nabla_{Y_6}E_9,E_3\>\neq 0$, again contradicting \eqref{L:basic}.

Now suppose $\alpha=\frac 12$. Note that $\h$ does not have elements of both degree 4 and 6. First assume that $\hp$ has no elements of degree 2. So $E_2\in\h$. Note that $\hp$ has an element $Z_3$, as otherwise we would have $E_2,E_3\in\h$, contradicting Lemma \ref{L:facts}\eqref{I:eiei+1}. But then as $[X_3,X_8]$ is a nonzero multiple of $X_{11}$, $\h$ has no elements of degree $8$ by Lemma~\ref{L:facts}\eqref{I:kn-k}. Since $[X_2,X_4]=\frac52 X_6, \; [X_2,X_6]=\frac12 X_8$ and $[X_2,X_3]=\frac52 X_5, \; [X_3,X_5]=X_8$, we conclude that $\h$ has no elements of degree $3, 4, 6$ or $8$, giving $\dim (\h) \le 5$. So we may suppose that $\hp$ has an element $Z_2$. It then follows from Lemma~\ref{L:facts}\eqref{I:kn-k} that $\h$ has no elements of degree $9$. Consequently $\h$ does not have elements of both degree 3 and 6, and nor does it have elements of both degree 4 and 5. Then $\h$ necessarily has elements $Y_2, Y_7, Y_8$ and has no elements of degree $4$, as $\ad^2(X_4)(X_2)=-\frac54 X_{10}$. We conclude that either $\h=\Span(Y_2,Y_3,Y_5,Y_7,Y_8,E_{11})$ or $\h=\Span(Y_2,Y_5,Y_6,Y_7,Y_8,E_{11})$. Then by  Lemma \ref{L:facts}\eqref{I:kn-k}, we have $\hp=\Span(E_1,Z_2,Z_5,Z_7,E_{10})$ or $\hp=\Span(E_1,Z_2,Z_7,Z_8,E_{10})$ respectively. For an ideal $(\g_{11,\frac 12})_{10}=\Span(E_{10},E_{11})$, we have $\g_{11,\frac 12}/(\g_{11,\frac 12})_{10}\cong \g_{9,\frac 12}$. But then by Lemma \ref{L:g/Xn}\eqref{I:g/Xnb}, the image of $\h$ in $\g_{9,\frac 12}$ is a totally geodesic subalgebra of dimension 5, contradicting Theorem \ref{T:g9}. This completes the proof in the case $X_{11}\in\h$.

\emph{Case $X_{11}\not\in\h$}:
Let $\alpha_1 \approx -1.5919$ and $\alpha_2 \approx 1.5342$ be the (unique) real roots of the polynomials $2\alpha^3+2\alpha^2+3$ and $4\alpha^3+8\alpha^2-8\alpha-21$ respectively. By Lemma \ref{L:Xnnotinh}\eqref{I:2.1a}, the required result holds provided $\alpha\neq -\frac 14, \, \alpha_1, \, \alpha_2$. In these special cases, $\dim(\h)\leq 6$, as $\h$ cannot contain elements of both degree $i$ and $11-i$ when $[X_i,X_{11-i}]$ is a nonzero multiple of $X_{11}$. So, suppose that $\dim(\h)=6$. Then $\h$ must necessarily have an element $Y_{10}$ and we may take $Y_{10}=E_{10}$ by Lemma~\ref{L:facts}\eqref{I:ei+aei+1}.

First suppose $\alpha =-\frac 14$. Then $\h=\Span(Y_5,Y_6,E_{10},Y_i,Y_j,Y_k)$, where $i \in \{2,9\}, \, j \in \{3,8\}$, $k \in \{4,7\}$. If there is an element $Z_2\in\hp$, then by Lemma \ref{L:facts}\eqref{I:kn-k-1}, there are no elements of degree 8 in $\h$, and so we have an element $Y_3\in\h$. But then $\h$ contains an element $[Y_3,Y_5]$ of degree 8, a contradiction. On the other hand, if $E_2\in\h$, then $\h$ contains an element $[E_2,Y_5]$ of degree $7$. But then $\hp$ has no elements of degree $3$ by Lemma \ref{L:facts}\eqref{I:kn-k-1}, so $E_2,E_3\in\h$, which contradicts Lemma~\ref{L:facts}\eqref{I:eiei+1}.

Now suppose $\alpha=\alpha_1$, so that $2\alpha^3+2\alpha^2+3= 0$. Then there are elements $Y_2,Y_9\in\h$. If there is an element $Z_2\in\hp$, then there are no elements of degree 8 in $\h$ by Lemma~\ref{L:facts}\eqref{I:kn-k-1}, so there is necessarily an $Y_3\in\h$. But then $\h$ contains an element $\ad^2(Y_3)(E_2)$ of degree $8$, a contradiction. On the other hand, if $\hp$ has no elements of degree 2, then $E_2\in\h$, so by Lemma~\ref{L:facts}\eqref{I:eiei+1}, there is an element $Z_3\in\hp$. Then $\h$ has no elements of degree $7$ by Lemma~\ref{L:facts}\eqref{I:kn-k-1}, hence no elements of degree $3$ or $5$, as $\ad^2(X_2) (X_3)$ and $\ad(X_2) (X_5)$ are nonzero multiples of $X_7$. Then $\h=\Span(E_2,Y_4,Y_6,Y_8,Y_9,E_{10})$. We may take $Y_9=E_9+aE_{11}$. Then $a\neq 0$ by Lemma~\ref{L:facts}\eqref{I:eiei+1}, and in particular, $X_{11} \notin \hp$. It follows that $\hp=\Span(E_1,Z_3,Z_5,Z_7,Z_9)$. But as $[X_5, X_6]$ is a nonzero multiple of $X_{11}$, we have $2\<\nabla_{Y_6}(E_9+aE_{11}),Z_5\>=a\<[Z_5,Y_6],E_{11}\>\neq 0$, contradicting \eqref{L:basic}.

Finally, suppose $\alpha=\alpha_2$, so that $4\alpha^3+8\alpha^2-8\alpha-21= 0$. Then there exist $Y_3,Y_8\in\h$. By Lemma \ref{L:facts}\eqref{I:kn-k-1}, $\hp$ has no elements of degree $2$, so $E_2\in\h$. But then $\h$ contains an element $\ad^4(E_2)(Y_3)$ of degree $11$, which is a contradiction.
\end{proof}

\bibliographystyle{amsplain}

\end{document}